\definecolor{marin}{rgb}   {0.,   0.3,   0.7} 
\definecolor{rouge}{rgb}   {0.8,   0.,   0.} 
\definecolor{sepia}{rgb}   {0.8,   0.5,   0.} 
\newtheorem{lemma}{Lemma}[section]
\newtheorem{theorem}[lemma]{Theorem}
\newtheorem{proposition}[lemma]{Proposition}
\newtheorem{corollary}[lemma]{Corollary}
\newtheorem{remark}[lemma]{Remark}
\newtheorem{example}[lemma]{Example}
\newtheorem{notation}[lemma]{Notation}
\newtheorem{definition}[lemma]{Definition}
\newtheorem{conclusion}[lemma]{Conclusion}
\numberwithin{equation}{section}
\newcommand{\QED}{\mbox{}\hfill \raisebox{-0.2pt}{\rule{5.6pt}{6pt}\rule{0pt}{0pt}} 
          \medskip\par}             
\newenvironment{Proofof}[1]{\noindent
    \parindent=0pt\abovedisplayskip = 0.5\abovedisplayskip
    \belowdisplayskip=\abovedisplayskip{\bfseries Proof of #1. }}{\QED}
\newcommand{\eps}{\varepsilon}
\newcommand{\dd}{\mathrm{d}}
\newcommand{\bE}{\boldsymbol{E}}
\newcommand{\bK}{\boldsymbol{K}}
\newcommand{\R}{\mathbbm{R}}
\newcommand{\Cc}{\mathcal{C}}
\newcommand{\Sc}{\mathcal{S}}
\newcommand{\C}{\mathbb{C}}
\newcommand{\T}{\mathbbm{T}}
\newcommand{\Z}{\mathbbm{Z}}
\newcommand{\Kc}{\mathcal{K}}
\newcommand{\am}{{\mathrm{am}}}
\newcommand{\sn}{\mathrm{sn}}
\newcommand{\cn}{\mathrm{cn}}
\newcommand{\dn}{\mathrm{dn}}
\newcommand{\rX}{\mathrm{X}}
\newcommand{\Norm}[2]{\|#1\|\left.\vphantom{T_{j_0}^0}\!\!\right._{#2}}
\author[E. Faou]{Erwan Faou}
\address{INRIA-Rennes Bretagne Atlantique and IRMAR (UMR 6625) Universit\'e de Rennes I} 
\email{Erwan.Faou@inria.fr}
\author[R. Horsin]{Romain Horsin}
\address{INRIA-Rennes Bretagne Atlantique and IRMAR (UMR 6625) Universit\'e de Rennes I} 
\email{Romain.Horsin@inria.fr}
\author[F. Rousset]{ Fr\'ed\'eric Rousset }
\address{ Laboratoire de Math\'ematiques d'Orsay (UMR 8628) Universit\'e Paris-Saclay}
\email{frederic.rousset@universite-paris-saclay.fr}
\urladdr{}
\thanks{This work was partially supported by the ERC starting grant GEOPARDI No. 279389}
\title[On linear damping around inhomogeneous stationary states of the Vlasov-HMF model]
{On linear damping around inhomogeneous stationary states of the Vlasov-HMF model}
\begin{document}

\keywords{Vlasov equations, Damping effects, HMF model, Hamiltonian systems, angle-action variables}
\subjclass{ 35Q83, 35P25 }

\maketitle

\begin{center}
{\em Dedicated to the memory of Walter Craig}
\end{center}

\begin{abstract}
We study the dynamics of perturbations around an inhomogeneous stationary state of the Vlasov-HMF (Hamiltonian Mean-Field) model, satisfying a linearized stability criterion (Penrose criterion). We consider solutions of the linearized equation around the steady state, and prove the algebraic decay in time of the Fourier modes of their density. We prove moreover that these solutions exhibit a scattering behavior to a modified state, implying a linear Landau damping effect with an algebraic rate of damping.
\end{abstract}

\nocite{*}

\section{Introduction}

In this paper we consider the Vlasov-HMF  (Hamiltonian Mean-Field) model. It is an ideal toy model that keeps several features of more complex kinetic equations, such as the Vlasov-Poisson system. It is moreover rather easy to do numerical simulations and analytic calculations on it, and it has thus received much interests in the physics literature (see  \cite{Barre4,Barre3,Barre1,Barre0,Barre2,Caglioti2,Caglioti3,Chavanis1,Chavanis2,Chavanis5,Chavanis4,Chavanis3}). This model exhibits also analogies with the Kuramoto model of coupled oscillators in its continuous limit \cite{Caglioti,Dietert,Gerard-Varet}. A long time analysis of the Vlasov-HMF model around spatially homogeneous stationary states has been performed in \cite{FR}, where a nonlinear Landau damping result is proved in Sobolev regularity.
In this paper, we consider the case of inhomogeneous steady states and study the long time behavior of the linearized equation. 

The Vlasov-HMF equation, with an attractive potential, reads
\begin{equation}
\label{V-HMF} 
\left|
\begin{split}
&\partial_{t}f(t,x,v)+\left\{f,H[f]\right\}(t,x,v)=0,\\
&H[f](t,x,v)=\frac{v^{2}}{2}-\phi[f](t,x), \\
&\phi[f](t,x)=\int_{\mathbbm{T}\times\mathbbm{R}}\cos(x-y)f(t,y,v)\dd y\dd v,
\end{split}
\right.
\end{equation}
with $(t,x,v)\in \mathbbm{R}\times\mathbbm{T}\times\mathbbm{R},$ where $\mathbbm{T}=\mathbbm{R}/\mathbbm{Z},$ and where 
\begin{equation}
\label{Poisson}
\left\{f,g\right\}=\partial_{x}f\partial_{v}g-\partial_{v}f\partial_{x}g
\end{equation}
is the Poisson bracket. It is rather easy to prove that this equation is globally well-posed, in Sobolev regularity for instance, using standard tools for transport equations associated with a divergence-free vector field.\\
The potential can be also  expressed as the following trigonometric polynomial
\begin{equation}
\label{potentialdef}
\phi[f](x)= \Cc[f]\cos (x) +  \Sc[f] \sin(x), 
\end{equation} 
with 
$$\Cc[f] = \int_{\T \times \R} \cos(y) f(y,v) \dd y \dd v\quad \mbox{and} \quad \Sc[f] = \int_{\T \times \R} \sin(y) f(y,v) \dd y \dd v,$$ 
where we use the  normalized Lebesgue measure on the torus\footnote{the Lebesgue measure on $[-\pi,\pi]$ divided by the length of the torus $2\pi$}. 

The previous equation possesses stationary solutions of the form 
\begin{equation}
\label{cimaros}
\eta(x,v)=G\left(h_{0}(x,v)\right), \quad h_{0}(x,v)=\frac{v^{2}}{2}-M_{0}\cos(x), \quad M_{0}>0.
\end{equation}
for some function  $G:\R \to \R$. The constant $M_0$ (called the magnetization) has to fulfill the condition
\begin{equation}
\label{eqM0}
M_0 = \Cc\left[G\left(\frac{v^2}{2} - M_0 \cos(x)\right) \right]. 
\end{equation}
Up to translation $x \mapsto x + x_0$, these are essentially the only stationary solutions, see Section \ref{sectionromain} where examples of couple $(M_0,G)$ satisfying the previous condition and the necessary stability condition ensuring damping effects are studied. 

For such a stationary states, if we seek solution of \eqref{V-HMF} under the form 
$f(t,x,v)= \eta(x,v) + r(t,x,v)$ with initial condition $r(0,x,v) = r^0(x,v)$, we obtain the equation
$$
\partial_{t}r(t,x,v)- \left\{\eta,\phi[r]\right\}(t,x,v)+\left\{r,H[\eta]\right\}(t,x,v)- \left\{r,\phi[r]\right\}(t,x,v)=0.
$$
{\em In this paper, we will retain the linear part of this equation, namely the linearized equation around $\eta$,} given by 
\begin{equation}
\label{V-HMF-lin}
\partial_{t}r(t,x,v)- \left\{\eta,\phi[r]\right\}(t,x,v)+\left\{r,h_0\right\}(t,x,v)=0. 
\end{equation}
The goal of this paper is the analysis of the long time behavior of this equation. 

In the homogeneous case $M_0 = 0$ where the steady steady states $\eta$ depends only on the velocity variable $v$, the situation both for the linear and nonlinear equation has been widely studied for general Vlasov equations. In this case, $h_0(x,v) = \frac{v^2}{2}$  the flow of the Hamiltonian $h_0$ is trivially calculated: without the potential term $\phi[r]$ in \eqref{V-HMF-lin}, the solution is given explicitly by $r(t,x,v) = r^0(x - tv,v)$ and gives rise to damping effect (see Landau \cite{Landau}) implying   a weak convergence of $r(t,x,v)$ towards the average of $r^0$ with respect to $x$. It then turns out that under a stability condition on $\eta$ called the Penrose condition, then the flow of the full linear equation \eqref{V-HMF-lin} behaves like the transport part for large times. This is well expressed as a scattering result where $g(t,x,v):= r(t,x + tv,v)$ is shown to converge for large times towards a smooth function $g_\infty(x,v)$ depending on $r^0(x,v)$. With this result in hand, the weak limit of $r(t,x,v)$ can be identified when $t \to +\infty$. 
 The scattering convergence rate depends on the regularity of the solution and is expected to be typically exponential for Gevrey or analytic functions and polynomial in time for finite Sobolev regularity. 
This linear scattering under a Penrose condition is the starting point of the nonlinear results of \cite{MV,BMM, GNR} showing that this scattering behavior persists in nonlinear equations. These result were proven for Gevrey initial data and general Vlasov equation including in particular the Vlasov-Poisson system. For the Vlasov-HMF a similar result can be proved under Sobolev regularity in the homogeneous case, see \cite{FR}. The question of Landau damping in Sobolev regularity for the Vlasov-Poisson system has been recently addressed, for instance in \cite{Bed1,Tristani}, where Landau damping results are proved in a weakly collisional regime, or in \cite{BMM2, HNR1, HNR2, BMM3} in the case of unconfined systems.

In the non-homogeneous case $M_0 > 0$ studied in this paper, the situation has been recently investigated (in particular in the physics literature see \cite{Barre3,Barre1} and in \cite{Dep19}). In this paper, we propose to follow the same strategy as in the homogeneous case where the free flow $x \mapsto x + tv$ is replaced by the flow  $\psi_t(x,v)$  of the Hamiltonian $h_0,$ associated with the ordinary differential equation 
\begin{equation}
\label{eq:pendulum}
\left\{
\begin{array}{rcll}
\dot x &=& \partial_v h_0 (x,v) &= v\\[2ex]
\dot v &=& - \partial_x h_0(x,v) &= - M_0 \sin(x),
\end{array}
\right.
\end{equation}
which is the classical dynamical system for the motion of a Pendulum. The flow $\psi_t$ is globally well defined and symplectic. In particular it preserves the Poisson bracket
$$
\forall\, t \in \R,\quad \{ f,g\} \circ \psi_t (x,v) = \{ f \circ \psi_t ,g\circ \psi_t\} (x,v). 
$$
Note that for a given function $f(x,v)$, the function 
$(t,x,v) \mapsto f(t,x,v) = f(\psi_t(x,v))$ solves the equation 
$\partial_t f = \{ f, h_0\}$ and $r \circ \psi_{-t}$ is the solution of the free flow in \eqref{V-HMF-lin}.

In this paper, we shall prove that under appropriate assumptions on $\eta$ (of Penrose type) and $r^{0}$ (a natural orthogonality condition), the solution of the linear equation \eqref{V-HMF-lin} also exhibit a scattering behavior: 
$g= r \circ \psi_{t}$ converges towards a function $g_{\infty}$, implying that the coefficients 
$\Sc[r(t)]$ and $\Cc[r(t)]$ decay in time, with algebraic rates of damping which depends on the regularity of the initial data and on the behavior of the function in the vicinity of the origin $(x,v) = (0,0)$ (the center of the ``eye" of the pendulum). 

The main ingredient of the proof is the use of action-angle variables $(\theta,a)$ for the integrable flow \eqref{eq:pendulum} and for which the flow $\psi_t$ can be calculated 
$\psi_t(\theta,a) = \theta + \omega(a) t$ for some frequency function $\omega$. We will use this ``explicit" formula (up to the knowledge of elliptic functions) to prove that $\psi_t$ gives rise to damping effect and that for smooth functions $\varphi$, the function of the form $\varphi\circ \psi_t$ has a weak limit that can be nicely expressed in terms of action-angle variables, and with a convergence rate in time $t$ depending on the ``flatness" of $\varphi$ and of the observable near the origin. This last particularity reflects the singularity of the action-angle change of variable. The full statement of this result is given in Theorem \ref{theodecay} which is proven in Section \ref{actionangle} and requires the use of precise asymptotics of Jacobi elliptic functions. This makes the proof seemingly technical, but the arguments are in fact simple for a reader familiar with this literature (we make a crucial use of many formulas in the book \cite{Elliptic1}). 

With this result in hand, our main results give the decay of the functions $\Sc(t)$ and $\Cc(t)$ and the convergence of $g = r \circ \varphi_t$, (and weak convergence of $r$, see Theorem \ref{theomagnetiz} and Corollary \ref{weakconv}) under an orghogonality assumption for $r^0$ well expressed in action-angle variable, and a Penrose condition on $\eta$, \eqref{penrose}. 

We then conclude by showing the existence of stationary states $\eta$ {\em i.e.} of couple $(M_0,G)$ satisfying \eqref{eqM0}) fulfilling the stability condition and relate it with more classical stability condition from the physics literature \cite{Barre3} that was also used in \cite{Orbital} conditioning  the nonlinear orbital stability of the inhomogeneous steady states of Vlasov-HMF. Strikingly enough, the key argument relies on explicit formulae in the action-angle change of variable and the direct verification that some terms do not vanish from known Fourier expansions of elliptic functions that can be found in \cite{Elliptic1}.
Note finally that the extension of our scattering result to the nonlinear case remains for the moment an open question. 

\section{Statements of the main results}

%

We now fix the notations and give the main results. The first part shows the dispersive effect of the flow of the Pendulum, and the second part gives the main application for the long time behavior of the linear equation \eqref{V-HMF-lin}. 

\subsection{Damping in action-angle variables}

As a one-dimensional Hamiltonian system, the  system associated with the Hamiltonian 
$h_0(x,v)$ is integrable. We will need relatively precise informations about the corresponding action-angle change of variable. Let us split the space into three charts $U_+$, $U_-$ and $U_\circ$ as follows: 
\begin{equation}
\begin{split}
&U_{+} = \{ (x,v) \in \T \times \R \, |\, v > 0 \quad \mbox{and}\quad 
h_0(x,v) > M_0\}, \\
&U_{-} = \{ (x,v) \in  \T \times \R \, | \, v < 0 \quad \mbox{and}\quad 
h_0(x,v) > M_0\}, \quad\mbox{and}\quad \\
&U_{\circ} = \{ (x,v) \in  \T \times \R \, | \, 
h_0(x,v) < M_0\}. 
\end{split}
\end{equation}
We have that $h_0(x,v) \geq -M_0$ and the center of the ``eye" $U_\circ$ corresponds to the point $(x,v) = (0,0)$ which minimizes $h_0.$ The set
$$\left\{ (x,v)\in \mathbbm{T}\times \mathbbm{R} \quad | \quad h_{0}(x,v)=M_{0}\right\}$$
will usually be called the ``separatix". Let us first recall the following Theorem: 

\begin{theorem}
\label{theoaction-angle}
Setting $h(x,v) = \frac{v^2}{2} - M_0 \cos(x)
$, then for $* \in \{\pm, \circ\}$, there exists a symplectic change of variable $(x,v) \mapsto (\psi,h)$ from $U_{*}$ to the set 
$$V_* := \{(\psi,h) \in \R^2 \,  |\, h \in I_*, \, \psi \in (-r_*(h),r_*(h))\}, $$
where $r_*(h)$ is a positive function, $I_{\pm} = (M_0,+\infty)$ and $I_\circ = (-M_0,M_0)$
such that the flow of the pendulum in the variable $(\psi,h)$ is $h(t) = h(0)$ and $\psi(t) = t + \psi(0).$\\ 
Moreover, there exists a symplectic change of variables $(\psi,h) \mapsto (\theta,a)$ from $V_*$ to 
$$W_* = \{(\theta,a) \in \R^2 \, | \, \theta \in (-\pi,\pi),\, a \in J_*\}= \T \times J_*, $$
with $J_{\pm} = (\frac{4}{\pi}\sqrt{M_0},+\infty)$ and $J_\circ = (0,\frac{8}{\pi}\sqrt{M_0})$ 
such that 
$$\theta(\psi,h) = \omega_*(h) \psi, \quad \mbox{and} \quad \partial_h a(h) = \frac{1}{\omega_*(h)} = \frac{\pi}{r_*(h)}, $$
so that the flow of the pendulum in the variables $(\theta,a)$ in $W_*$ is $a(t) = a(0)$ and $\theta(t) = t \omega_{*}(a(0)) + \theta(0).$
\end{theorem}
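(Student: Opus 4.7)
The plan is to exploit the complete integrability of the pendulum Hamiltonian $h_{0}$ and construct the coordinates chart by chart: first I would rectify the flow in energy-time coordinates $(\psi,h)$, then I would normalize to action-angle coordinates $(\theta,a)$.

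I would start by observing that the only critical points of $h_{0}$ on $\mathbbm{T}\times\mathbbm{R}$ are the elliptic center $(0,0)$ and the hyperbolic saddles $(\pm\pi,0)$, all of which lie on the separatrix. Hence on each chart $U_{*}$ the function $h_{0}$ is regular, and each level set $\{h_{0}=h\}\cap U_{*}$ is a smooth simple periodic orbit of the Hamiltonian flow --- a libration inside $U_{\circ}$, a rotation winding once around the torus in $U_{\pm}$. I would fix a smooth global transversal section $\Sigma_{*}\subset U_{*}$ (for instance $\{v=0,\,x>0\}$ for $U_{\circ}$, and $\{x=0,\,\pm v>0\}$ for $U_{\pm}$) whose energy function $h|_{\Sigma_{*}}$ is a diffeomorphism onto $I_{*}$, and then define $\psi(x,v)$ to be the flow-time of the pendulum from $\Sigma_{*}$ to $(x,v)$, with $h(x,v)=h_{0}(x,v)$. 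Since $\dd\psi(X_{h_{0}})=1$ and $\dd h(X_{h_{0}})=0$, the identity $\dd\psi\wedge \dd h=\dd x\wedge \dd v$ holds automatically, making $(x,v)\mapsto(\psi,h)$ a symplectomorphism of $U_{*}$ onto $V_{*}$; the variable $\psi$ ranges over $(-r_{*}(h),r_{*}(h))$ with $2r_{*}(h)$ the full period of the orbit, obtained by a quadrature from $v=\pm\sqrt{2(h+M_{0}\cos x)}$.

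For the second step, the substitution $\sin(x/2)=k\sin\phi$ with $k^{2}=(h+M_{0})/(2M_{0})$ in $U_{\circ}$, respectively $k^{2}=2M_{0}/(h+M_{0})$ in $U_{\pm}$, converts both $r_{*}(h)$ and the action
\[
a(h)=\frac{1}{2\pi}\oint_{\{h_{0}=h\}\cap U_{*}}v\,\dd x
\]
to expressions in the complete elliptic integrals $K(k)$ and $E(k)$; in particular one would find
\[
r_{\circ}(h)=\frac{2K(k)}{\sqrt{M_{0}}},\qquad a_{\circ}(h)=\frac{8\sqrt{M_{0}}}{\pi}\bigl(E(k)-(1-k^{2})K(k)\bigr),
\]
with analogous formulas in $U_{\pm}$. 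Hamilton's relation $a'(h)=T_{*}(h)/(2\pi)=r_{*}(h)/\pi$ gives a smooth positive derivative, so $a$ is a diffeomorphism of $I_{*}$ onto its image $J_{*}$. Evaluating at the endpoints of $I_{*}$ --- using the standard values $K(0)=E(0)=\pi/2$, $E(1)=1$ together with $(1-k^{2})K(k)\to 0$ as $k\to 1^{-}$ --- yields precisely the stated $J_{\circ}=(0,\tfrac{8}{\pi}\sqrt{M_{0}})$ and $J_{\pm}=(\tfrac{4}{\pi}\sqrt{M_{0}},+\infty)$.

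Finally I would set $\omega_{*}(h):=\pi/r_{*}(h)$ and $\theta:=\omega_{*}(h)\psi$. The normalization $\omega_{*}(h)r_{*}(h)=\pi$ makes $\theta$ sweep $(-\pi,\pi)$ exactly as $\psi$ traverses $V_{*}$, and since $a$ depends only on $h$ with $a'(h)\,\omega_{*}(h)=1$, one obtains
\[
\dd\theta\wedge \dd a=\omega_{*}(h)\,a'(h)\,\dd\psi\wedge \dd h=\dd\psi\wedge \dd h,
\]
so $(\psi,h)\mapsto(\theta,a)$ is symplectic onto $W_{*}=\mathbbm{T}\times J_{*}$. The identity $\partial_{h}a=1/\omega_{*}(h)$ and the linearization of the flow to $\theta(t)=\omega_{*}(a(0))t+\theta(0)$, $a(t)=a(0)$, follow immediately from the transformation. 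The bulk of the actual work --- routine for anyone familiar with elliptic function tables but tedious in bookkeeping --- lies in manipulating the substitutions to obtain the explicit formulas for $r_{*}$ and $a$ and verifying the endpoint limits from the asymptotics of $K$ and $E$; the only conceptual subtlety, which will matter later for the dispersive estimates but not here, is the singular behavior of the change of variable near the center $(0,0)$ and the separatrix.
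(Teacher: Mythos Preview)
Your proposal is correct and follows essentially the same route as the paper. The only cosmetic difference is that you construct $\psi$ as the flow-time from a transversal section and verify symplecticity via the contraction $\iota_{X_{h_0}}(\dd\psi\wedge\dd h)=\dd h=\iota_{X_{h_0}}(\dd x\wedge\dd v)$ (which indeed suffices in two dimensions), whereas the paper uses the mixed-variable generating function $S(x,h)=\int^{x}v(y,h)\,\dd y$ with $\psi=\partial_{h}S$; these are the same object, since $\partial_{h}S=\int^{x}\dd y/v$ is precisely the flow-time. Your elliptic substitutions, the resulting formulas for $r_{*}$, $a$, and the endpoint evaluations of $J_{*}$ all match the paper's Propositions~\ref{prop72} and~\ref{prop78}.
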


This Theorem is explicit in the sense that the changes of variables express in terms of Jacobi elliptic functions. As $\theta$ is a variable in a fixed torus, Fourier series in variable $\theta$ are well defined on each set $W_*$ corresponding to $U_*$. For a given function $f(x,v)$ we can define the restriction $f^*$ of $f$ to the set $U_*,$ and the Fourier coefficients 
\begin{equation}
\label{coefou}f^*_\ell(a) = \frac{1}{2\pi} \int_{-\pi}^{\pi} f^*(  x(\theta,a), v(\theta,a))  e^{-i \ell \theta} \dd \theta, \qquad \ell \in \Z, \quad a\in J_*
\end{equation}
where $x(\theta,a)$ and $v(\theta,a)$ are given by the change of variable on $U_*$.  
Note that for given functions $f$ and $\varphi$, we have the decomposition 
\begin{equation}
\label{oslo}
\int_{U_{*}} f(x,v) \varphi(x,v) \dd x \dd v = \sum_{\ell\in \Z} \int_{J_*} f^*_\ell (a) \varphi^*_{-\ell} (a) \dd a, 
\end{equation}
for all $*\in\{\circ,\pm\}.$ Finally, let us notice that the Jacobian of the change of variable $h \mapsto a(h)$ is $\partial_h a(h) = \frac{1}{\omega^*(h)}$, and we have in particular 
\begin{equation}
\label{trondheim}
 \int_{J_*} f^*_\ell (a) \varphi^*_{-\ell} (a) \dd a =  \int_{I_*} f^*_\ell (a(h)) \varphi^*_{-\ell} (a(h)) \frac{1}{\omega_*(h)}\dd h. 
\end{equation}
We will usually write $f^*_\ell(h)$ for the quantity  $f^*_\ell(a(h))$, and several times consider functions $f$ as depending on $(x,v)$, $(\theta,h)$ and $(\theta,a)$ by keeping the same notation. For example a stationnary state $\eta$ depends only on $h$ and hence on $a$ and will be written $\eta = G(h)$ or $\eta = G(a)$. 

In fact the singularities of the relevant functions in action-angle variables are better expressed in variables $(\theta,h)$, which are not symplectic, but on which integrals and flow of the system are easy to calculate. 
Moreover, in this case, 
$$f^*_0(a) =  \frac{1}{2\pi} \int_{-\pi}^{\pi} f^*( x(\theta,h), v(\theta,h)) \dd \theta$$
can be seen as an average of $f$ on the isocurve $\{ (x,v) \, | \,  h_0(x,v) = h\}$, while $\psi$ is the arclength on this curve, the jacobian $\frac{1}{\omega_{*}(h)}$ appearing in the standard co-area formula, which is another way to see \eqref{oslo}-\eqref{trondheim}. \\
The notations $C_{\ell}^{*}(a)$ and $S_{\ell}^{*}(a)$ will be used for the Fourier coefficients of the functions
$$\theta \mapsto \cos(x(\theta,a)) \quad \mbox{and} \quad \theta \mapsto \sin(x(\theta,a)),$$
respectively, and both restricted to $U_{*}.$ These coefficients can be calculated explicitly using elliptic functions (see Propositions \ref{prop74} and \ref{prop710}), and we shall write
\begin{equation}
\label{CS}
\cos(x(\theta,a)) = \sum_{\ell \in \Z} C_\ell^*(a) e^{i\ell \theta} \quad \mbox{and} \quad \sin(x(\theta,a)) = \sum_{\ell \in \Z} S_\ell^*(a) e^{i\ell \theta},
\end{equation}
for $(\theta,a)\in J_{*}\times (-\pi,\pi).$\\
Before stating our first result, let us fix some notations. We use the classical notation $\langle v \rangle = (1+|v|^{2})^{1/2}$, for any $v \in \R^d$, and for a two-dimensional integer $\alpha=(\alpha_{1},\alpha_{2})\in\mathbbm{N}^{2},$ we set $|\alpha|=\alpha_{1}+\alpha_{2}$. 
We shall also write $\partial_{x,v}^{\alpha}$ for the operator acting on functions $f:\T\times \R \to \mathbbm{C}$ by the formula
$$\partial_{x,v}^{\alpha}f(x,v)=\partial_{x}^{\alpha_{1}} \partial_{v}^{\alpha_{2}} f(x,v).$$
In section \ref{actionangle} we prove the following result: 
\begin{theorem}[Dispersive effect of the pendulum flow]
\label{theodecay}
Consider $f(x,v)$ and $\varphi(x,v)$ two functions such that 
$$\max_{|\alpha |  \leq m} \Norm{ \langle v \rangle^\mu\partial_{x,v}^\alpha  f(x,v)}{L^{\infty} }\leq C_{m,\mu}
\quad\mbox{and}\quad
\max_{|\alpha |  \leq M} \Norm{\partial_{x,v}^\alpha  \varphi(x,v)}{L^{\infty}}\leq C_{M},$$
for some $m$, $M$ and $\mu > 2$. 
Let $p$ and $q$ be defined by
\begin{equation}
\notag
\begin{split}
&p= \max \{n \geq 1  ,  \, \partial^\alpha_{x,v} f(0,0) = 0, \, \forall \alpha, \, 1 \leq |\alpha | \leq n\},\\
&q= \max \{n \geq 1  ,  \, \partial^\alpha_{x,v} \varphi(0,0) = 0, \, \forall \alpha, \, 1 \leq |\alpha | \leq n\}, 
\end{split}
\end{equation}
with the convention that these number are $0$ is the corresponding sets are empty. 
Then, if
$$m \geq  5 + p  + { p+ q \over 2} \quad \mbox{and} \quad M\geq \max\left(7 + q +  { p+ q \over 2 },  m+2\right),$$
there exists $C>0$ such that for all $t \geq 0,$ we have
$$\left|\int_{\T \times \R} f(x,v) \varphi(\psi_t(x,v)) \dd x \dd v - \sum_{* \in \{ \pm,\circ\}} \int_{J_*} f_0^*(a) \varphi_0^*(a)  \dd a \right|  \leq {\frac{C}{\langle t \rangle^{ \frac{p+q}{2} + 2 } }}.$$
\end{theorem}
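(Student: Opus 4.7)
My strategy is to change to action-angle coordinates on each chart, Fourier-expand in the angle, and extract decay by integration by parts in the action variable. The vanishing hypotheses at $(0,0)$ are needed precisely to control boundary contributions at the center of the eye $a = 0$ in the inner chart $U_\circ$. Concretely, split $\int_{\T\times\R} = \sum_{*\in\{\pm,\circ\}}\int_{U_*}$ (the separatrix is a null set) and apply Theorem~\ref{theoaction-angle} on each $U_*$. Since the flow in action-angle variables is the translation $\theta \mapsto \theta + t\omega_*(a)$, the Fourier coefficient of $\varphi\circ\psi_t$ in $\theta$ is $\varphi^*_\ell(a) e^{i\ell t\omega_*(a)}$, and the decomposition~\eqref{oslo} yields
\begin{equation*}
\int_{U_*} f(x,v)\,\varphi(\psi_t(x,v))\,\dd x\dd v = \sum_{\ell\in\Z}\int_{J_*} f^*_\ell(a)\,\varphi^*_{-\ell}(a)\,e^{-i\ell t\omega_*(a)}\,\dd a.
\end{equation*}
Summed over $*$, the $\ell=0$ term is exactly the limit in the theorem, so the task reduces to bounding the $\ell\ne 0$ part by $C\langle t\rangle^{-(p+q)/2-2}$.

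For $\ell\ne 0$, I integrate by parts in $a$ using $e^{-i\ell t\omega_*(a)} = -(i\ell t\omega'_*(a))^{-1}\partial_a e^{-i\ell t\omega_*(a)}$. Each iteration buys a factor $(t\ell)^{-1}$ at the cost of one further derivative of the amplitude $f^*_\ell\varphi^*_{-\ell}/\omega'_*$ and a boundary contribution. The critical endpoint is $a = 0$ in $U_\circ$, where the harmonic approximation $h + M_0 \approx \tfrac{1}{2}(v^2 + M_0 x^2)$ gives $x,v = O(\sqrt a)$. Taylor-expanding $f$ and $\varphi$ at the origin and using the vanishing orders $p, q$ yields $f^*_\ell(a)\varphi^*_{-\ell}(a) = O(a^{(p+q)/2+1})$ as $a\to 0^+$, with constants summable in $\ell$; this flatness kills the boundary terms at $a=0$ throughout the relevant number of integrations by parts, and the sharp rate $\langle t\rangle^{-(p+q)/2-2}$ then emerges from a Van der Corput / Mellin-type analysis of the model integral $\int_0^{\varepsilon} a^\beta e^{-i\lambda\omega(a)}\,\dd a = O(\lambda^{-\beta-1})$ applied with $\beta = (p+q)/2 + 1$.

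The main obstacle is the region near the separatrix $h = M_0$, where $\omega_*(a)\to 0$ and $\omega'_*(a)$ carries a logarithmic singularity inherited from the complete elliptic integral defining $a(h)$, so naive integration by parts in $a$ is not directly efficient. The cure is to split the $a$-domain at the scale $\omega_*(a) \sim 1/t$: the portion closer to the separatrix has very small measure (since $\omega_*$ vanishes like $1/\log(\mathrm{dist\ to\ separatrix})$), while on its complement integration by parts is clean. Obtaining uniform control on $f^*_\ell, \varphi^*_\ell$, on their $a$-derivatives, and on the quotients $1/\omega'_*$ across all three charts — and matching the expansions on the two sides of the separatrix — is where the precise asymptotics of Jacobi elliptic functions (Propositions~\ref{prop74},~\ref{prop710} and the formulas from \cite{Elliptic1}) become indispensable. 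The regularity budget then has a clean interpretation: the weight $\mu > 2$ ensures absolute convergence on the unbounded charts $U_\pm$; the smoothness $m$ on $f$ translates into the required decay of $f^*_\ell$ in $\ell$; and the gap $M \ge m + 2$ on $\varphi$ supplies the two ``clean'' integrations by parts beyond those already absorbed by the vanishing orders, yielding the $+2$ in the final exponent.
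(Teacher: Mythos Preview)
Your overall architecture matches the paper's: pass to action-angle on each chart, Fourier-expand in $\theta$, and gain decay by repeated integration by parts in the radial variable, with the center of the eye $h=-M_0$ in $U_\circ$ determining the sharp exponent $(p+q)/2+2$. Your analysis near the center is correct in spirit and close to the paper's Proposition~\ref{prop711} and Lemma~\ref{integrale} (the latter handling the half-integer case you subsume under ``Van der Corput / Mellin'').

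Where you diverge from the paper is the separatrix, and there your diagnosis is off. You assert that $\omega'_*(a)$ ``carries a logarithmic singularity'' making naive integration by parts inefficient, and propose a cutoff at scale $\omega_*\sim 1/t$. In fact the opposite happens: since $\omega_*(h)\sim c/|\log(M_0-h)|$ near $h=M_0$, one has $\partial_h\omega_*(h)\sim c\,(M_0-h)^{-1}|\log(M_0-h)|^{-2}\to+\infty$, so the factor $1/\partial_h\omega_*$ \emph{vanishes} at the separatrix, and vanishes fast enough to dominate the merely logarithmic growth of $f_\ell^*,\varphi_{-\ell}^*$ and all their $h$-derivatives there. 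This cancellation is precisely the content of the paper's Lemma~\ref{lemrec}: after each application of the adjoint operator $\mathrm{D}_{\ell,h}^\top$, the resulting amplitude still vanishes at both endpoints $h=M_0$ and $h=+\infty$, so one can integrate by parts as many times as regularity allows with no boundary terms and no cutoff. Your proposed splitting is therefore unnecessary, and as stated it is incomplete: you would need to track the boundary terms generated at the artificial cut through every iteration and show they are harmless, which you have not done. The paper's route avoids this entirely.
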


Let us explain this Theorem as follows: the starting point of the proof is the Fourier expansion \eqref{oslo}, which yields
\begin{eqnarray*}
\int_{\mathbbm{T}\times \mathbbm{R}} f(x,v) \varphi(\psi_t(x,v)) \dd x \dd v  &=& \sum_{* \in \{ \pm,\circ\}} \sum_{\ell\in \Z} \int_{J_*}f_\ell^*(a) \varphi_{-\ell}^*(a)  e^{i t \ell \omega_{*}(a) } \dd a \\
&=&\sum_{* \in \{ \pm,\circ\}} \sum_{\ell\in \Z} \int_{I_*}f_\ell^*(h) \overline{\varphi_{\ell}^*(h)}  e^{i t \ell \omega_{*}(h) } \frac{1}{\omega_{*}(h)} \dd h. 
\end{eqnarray*}
Now we can use a stationary phase argument by integrating with respect to $h$ to gain a decay with respect to $t$. Typically, this kind of analysis depends on the possible cancellation of $\partial_h \omega_{*}(h)$. In our case, the situation seems to be very favourable, as $\partial_h \omega_{*}(h)$ never approaches zero, as shown in Section \ref{actionangle}. The stationary phase argument also relies on cancellations of $f_{\ell}*$ and $\varphi_{\ell}^{*}$ at the boundary points, and there the problems come from the singularities of the action-angle variables.\\
We can distinguish two zones, starting with the separatix $h \sim M_0$. In this case, the action-angle variables induce logarithmic singularities. Essentially it means that the Fourier coefficients $f^*_\ell,\varphi_{\ell}^{*}$ involve logarithmic singularities near $h = M_0$. However, near this point, $\omega_{*}(h)$ also exhibits a logarithmic singularity, and it can be shown that $\partial_h \omega_{*}(h)$ goes to infinity fast enough to ensure a decay in time which is essentially driven by the regularity of $f$ and $\varphi$. So the problems are not at the separatix.\\ 
Near the point $h = -M_0$, the situation is more delicate: in this zone, the pendulum Hamiltonian is essentially a perturbation of the Harmonic oscillator, for which no damping is expected ($\omega_{*}$ being constant). However, we can prove that $\partial_h \omega_{*}(h)$ does not vanish near this point. But this is not enough: indeed the action-angle variable of the harmonic oscillator involves algebraic singularity of order $\sqrt{h + M_0}$. This explains why the rate of decay of the integral with respect to the time is mainly driven by the behavior of $f$ and $\varphi$ near $(0,0)$ which corresponds of a local behavior of $f_\ell^\circ(h) \varphi_{-\ell}^\circ(h)$ in $(h + M_0)^{\frac{p+q}{2}},$ yielding the main contribution for the decay in the previous Theorem.\\
Theorem \ref{theodecay} will be a straightforward consequence of Propositions \ref{decaydehors} and \ref{decaydansloeil}, proven in section \ref{actionangle}.

\subsection{Linear damping}

As explained above, our main result is expressed as a scattering result with the strong convergence of $g = r \circ \psi_t$ and by using the previous Theorem, the weak convergence of $r$. The next proposition gives the equation satisfied by $g$ and fixes some notations used later. This Proposition will be proved at the beginning of section 3.

\begin{proposition}
\label{L-V}
Let $r(t,x,v)$ be the solution of the linearized equation \eqref{V-HMF-lin}. 
Then the function 
\begin{equation}
\label{gdef}
g(t,x,v) = r(t,\psi_{t}(x,v)) = r \circ \psi_t(x,v)
\end{equation}
satisfies the equation
\begin{equation}
\label{gequation}
\partial_t g = \Cc(t) \{ \eta , \cos ( \rX \circ \psi_{t})\} + \Sc(t) \{\eta, \sin ( \rX \circ \psi_{t})\}. 
\end{equation}
where $\rX:\T\times \R \to \T$ denotes the projection $\rX(x,v)=x,$ and where 
\begin{equation}
\label{Cdef0}
\Cc(t) = \Cc[r(t)]=\Cc[g\circ \psi_{-t}] = \int_{\T \times \R} \cos(\rX (y,w)) g(t,\psi_{-t}(y,w)) \dd y \dd w
\end{equation}
and
\begin{equation}
\label{Sdef0}
\Sc(t) = \Sc[r(t)]=\Sc[g\circ \psi_{-t}] = \int_{\T \times \R} \sin(\rX (y,w)) g(t,\psi_{-t}(y,w)) \dd y \dd w.
\end{equation}
Moreover, the coefficients $\Cc(t)$ and $\Sc(t)$ satisfy the following Volterra integral equations
\begin{equation}
\label{Volt}
\Cc(t)=F_{\Cc}(t) + \int_{0}^{t}\Cc(s) K_{\Cc}(t-s) \dd s \quad \mbox{and} \quad \Sc(t)=F_{\Sc}(t) + \int_{0}^{t}\Sc(s) K_{\Sc}(t-s) \dd s,
\end{equation}
with
\begin{equation}
\label{eq:F}
\begin{array}{rcl}
F_{\Cc}(t) &=& \displaystyle  \int_{\T \times \R} \cos(\rX \circ \psi_{t}(y,w)) r^{0}(y,w) \dd y \dd w ,\\[2ex]
 F_{\Sc}(t) &=& \displaystyle \int_{\T \times \R} \sin(\rX \circ \psi_{t}(y,w)) r^{0}(y,w) \dd y \dd w ,\\[2ex]
K_{\Cc}(t)&=&\displaystyle\mathds{1}_{\{t\geq 0\}}\int_{\T \times \R } \{ \eta , \cos ( \rX )\}\cos(\rX \circ \psi_t) , \quad\mbox{and}\\ [2ex]
K_{\Sc}(t)&=&-\displaystyle\mathds{1}_{\{t\geq 0\}}\int_{\T \times \R } \{ \eta , \sin ( \rX )\}\sin(\rX \circ \psi_t) .
\end{array}
\end{equation}
\end{proposition}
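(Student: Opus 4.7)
The plan is to treat the three assertions in order, using throughout that $\psi_t$ is symplectic (so it preserves the Poisson bracket) and that $\eta \circ \psi_t = \eta$, because $\eta = G(h_0)$ and $h_0$ is conserved along the pendulum flow. First, differentiating $g(t,\cdot) = r(t,\cdot) \circ \psi_t$ in $t$ and using \eqref{eq:pendulum} in chain-rule form gives $\partial_t g = (\partial_t r) \circ \psi_t + \{r, h_0\} \circ \psi_t$. Substituting \eqref{V-HMF-lin} for $\partial_t r$ cancels the transport contribution, leaving $\partial_t g = \{\eta, \phi[r](t,\cdot)\} \circ \psi_t$. The symplectic invariance of the bracket together with $\eta \circ \psi_t = \eta$ then yields $\{\eta, \phi[r](t,\cdot)\} \circ \psi_t = \{\eta, \phi[r](t,\cdot) \circ \psi_t\}$, and inserting \eqref{potentialdef} produces \eqref{gequation}.

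The representations \eqref{Cdef0}--\eqref{Sdef0} follow from the definitions of $\Cc[r(t)]$ and $\Sc[r(t)]$ combined with $r(t,\cdot) = g(t,\cdot) \circ \psi_{-t}$ and the unit-Jacobian change of variables $(y,w) = \psi_t(z)$. For the Volterra equations, I would integrate \eqref{gequation} in time to obtain
\[
g(t,\cdot) = r^0 + \int_0^t \Cc(s) \, \{\eta, \cos(\rX \circ \psi_s)\} \, \dd s + \int_0^t \Sc(s) \, \{\eta, \sin(\rX \circ \psi_s)\} \, \dd s,
\]
and then test this identity against $\cos(\rX \circ \psi_t)$ (respectively $\sin(\rX \circ \psi_t)$); the source terms $F_\Cc$ and $F_\Sc$ arise immediately from the $r^0$ contribution. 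To put the remaining double integrals into convolution form, the key identity is $\{\eta, f \circ \psi_s\} = \{\eta, f\} \circ \psi_s$ (from symplectic invariance of the bracket together with $\eta \circ \psi_s = \eta$); combining it with the change of variable $z = \psi_s(y,w)$ and the group law $\psi_t \circ \psi_{-s} = \psi_{t-s}$ turns the inner integral into a function of $t - s$ alone, namely $K_\Cc(t-s)$ and $K_\Sc(t-s)$ respectively.

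There remain two cross integrals, namely $\int_{\T \times \R} \cos(\rX \circ \psi_{t-s})(z) \, \{\eta, \sin \rX\}(z) \, \dd z$ and $\int_{\T \times \R} \sin(\rX \circ \psi_{t-s})(z) \, \{\eta, \cos \rX\}(z) \, \dd z$, which must vanish. For this I would use the symplectic involution $\sigma(x,v) = (-x,-v)$, which satisfies $h_0 \circ \sigma = h_0$ (hence $\psi_t \circ \sigma = \sigma \circ \psi_t$) and $\eta \circ \sigma = \eta$; combined with $\cos \rX \circ \sigma = \cos \rX$ and $\sin \rX \circ \sigma = -\sin \rX$ and another application of the symplectic invariance of the bracket, one obtains that $\{\eta, \cos \rX\}$ and $\cos(\rX \circ \psi_{t-s})$ are $\sigma$-even while $\{\eta, \sin \rX\}$ and $\sin(\rX \circ \psi_{t-s})$ are $\sigma$-odd. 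A quick parity count then shows that each cross integrand is $\sigma$-odd on the symmetric domain $\T \times \R$ and therefore integrates to zero.

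The proof is essentially bookkeeping; the only points of care are the signs arising from the Poisson bracket, the Jacobian of the various changes of variables, and the semigroup property of the flow. No genuine analytic difficulty arises, provided $r^0$ is regular enough (and decays in $v$) to justify Fubini and the absence of boundary contributions.
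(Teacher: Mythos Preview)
Your proof is correct and follows the same overall skeleton as the paper's, but your route to the convolution form of the kernels is slightly more direct. The paper keeps $\Cc(t)$ in the form $\int \cos(\rX)\, g(t,\psi_{-t})$, so that after inserting the Duhamel formula for $g$ the kernels appear with the ``wrong'' time argument $\psi_{s-t}$; it then invokes the time-reversal symmetry $\nu(x,v)=(x,-v)$ (which satisfies $\psi_\tau\circ\nu=\nu\circ\psi_{-\tau}$ and flips the sign of the bracket) followed by an integration by parts $\int A\{\eta,C\}=-\int C\{\eta,A\}$ to convert $\psi_{s-t}$ into $\psi_{t-s}$ and land on the stated $K_\Cc$, $K_\Sc$. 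Your choice to test $g$ against $\cos(\rX\circ\psi_t)$ and then change variables $z=\psi_s(y,w)$ produces $\psi_t\circ\psi_{-s}=\psi_{t-s}$ immediately, so neither the $\nu$-reversibility nor the integration by parts is needed. The treatment of the cross terms is identical in both arguments: your involution $\sigma(x,v)=(-x,-v)$ is exactly the paper's $\mu$, and the parity count you describe is the paper's computation verbatim. The only thing your sketch leaves implicit is checking the signs carefully in the $\Sc$/$K_\Sc$ case (note the minus sign in the definition of $K_\Sc$); this is pure bookkeeping, but worth writing out explicitly.
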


%
%
%

For a function $F(t)$, we define its Fourier transform by
$$\hat F(\xi) = \int_{\R} F(t) e^{- i t \xi} \dd t.$$

\begin{theorem}[Linear damping]
\label{theomagnetiz}
Let $\eta(x,v) = G(h_0(x,v))$ with $G$ a decreasing function that satisfies the assumption
\begin{equation}
\label{regG}\max_{n\leq 10}\left\| \langle y \rangle^{\mu}  G^{(n)}(y) \right\|_{L^\infty(\R)}\leq C_{\mu},
\end{equation}
with $\mu >2,$ and assume that there exists $\kappa>0$ such that 
\begin{equation}
\label{penrose}
\min_{\mathrm{Im} (\xi) \leq 0} | 1 - \hat K_{\mathcal{C}}(\xi) | \geq \kappa\quad \mbox{and}\quad 
\min_{\mathrm{Im} (\xi) \leq 0} | 1 - \hat K_{\mathcal{S}}(\xi) | \geq \kappa. 
\end{equation}
Let us assume that the initial perturbation $r^{0}$ satisfies 
$$\max_{|\alpha |  \leq m} \Norm{ \langle v \rangle^\nu\partial_{x,v}^\alpha  r^{0}(x,v)}{L^{\infty} }\leq C_{m,\nu},$$
for some $\nu>2,$ and where
$$m\geq 5+\frac{3 p}{2},$$
with
$$p=\max\left\{k\geq 1, \quad \partial_{x,v}^{\alpha} r^{0}(0,0)=0, \quad \forall 1\, \leq |\alpha|\leq k\right\}.$$
Then, if $r^{0}$ satisfies the orthogonality condition 
\begin{equation}
\label{orthocond}
\sum_{* \in \{\pm,\circ\}} \int_{J_*} C_0^*(a) (r^{0})_0^*(a)  \dd a  = 0, 
\end{equation}
there exists $C>0$ such that for all $t \geq 0$
$$ |\mathcal{C}(t)| \leq \frac{C}{\langle t \rangle^{ \max(3,\frac{p+5}{2}) }} \quad \mbox{and} \quad |\mathcal{S}(t)| \leq \frac{C}{\langle t \rangle^{ 2}}.$$
\end{theorem}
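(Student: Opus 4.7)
The plan is to apply Theorem~\ref{theodecay} to estimate the decay of the forcings $F_\mathcal{C}, F_\mathcal{S}$ and kernels $K_\mathcal{C}, K_\mathcal{S}$ in \eqref{eq:F}, then invert the Volterra equations \eqref{Volt} using the Penrose condition \eqref{penrose} to transfer the decay to $\mathcal{C}$ and $\mathcal{S}$.

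\textbf{Dispersive estimates on kernels and forcings.} The structural fact that $\eta = G(h_0)$ depends only on the action $a$ in action-angle coordinates gives
\[\{\eta, \varphi\}(\theta, a) = -G'(h_0)\, h_0'(a)\, \partial_\theta \varphi\]
on each chart $U_*$, so $(\{\eta, \cos \rX\})_0^*$ and $(\{\eta, \sin \rX\})_0^*$ vanish identically, killing the limit term in Theorem~\ref{theodecay} for the kernels. A Taylor expansion gives $\{\eta, \cos\rX\}(x,v) = G'(h_0) v\sin x = G'(-M_0) v x + O(|(x,v)|^4)$, so this function vanishes to order $2$ (hence $p_f = 1$); combined with $q_{\cos} = 1$ this yields $K_\mathcal{C}(t) = O(\langle t\rangle^{-3})$. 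The parallel analysis for $\{\eta,\sin\rX\}(x,v) = -G'(h_0) v\cos x = -G'(-M_0) v + O(|(x,v)|^3)$ (with $p_f = 0$) together with $q_{\sin} = 0$ gives $K_\mathcal{S}(t) = O(\langle t\rangle^{-2})$. For $F_\mathcal{C}$, take $f = r^0$ and $\varphi = \cos$: the orthogonality hypothesis \eqref{orthocond} is precisely the vanishing of the limit term $\sum_* \int_{J_*} C_0^*(a) (r^0)_0^*(a)\,da$, so Theorem~\ref{theodecay} yields $F_\mathcal{C}(t) = O(\langle t\rangle^{-(p+5)/2})$. For $F_\mathcal{S}$, the pendulum reflection symmetry $(x,v)\mapsto(-x,-v)$ corresponds in action-angle coordinates to $(\theta, a)\mapsto(-\theta, a)$, making $\sin(x(\theta, a))$ odd in $\theta$; hence $S_0^*(a) \equiv 0$ automatically, no orthogonality is needed, and $F_\mathcal{S}(t) = O(\langle t\rangle^{-(p+4)/2})$, which is at least $O(\langle t\rangle^{-2})$.

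\textbf{Volterra inversion.} The Penrose condition \eqref{penrose} gives a well-defined bounded resolvent $\hat R_\mathcal{C} = \hat K_\mathcal{C}/(1 - \hat K_\mathcal{C})$ (and similarly for $\mathcal{S}$); standard Paley--Wiener/Laplace arguments transfer the algebraic decay of $K_*$ to $R_*$. Writing $\mathcal{C} = F_\mathcal{C} + R_\mathcal{C} * F_\mathcal{C}$ and splitting the convolution at $s = t/2$ (using the decay of $R_\mathcal{C}$ on $[0,t/2]$ and of $F_\mathcal{C}$ on $[t/2,t]$) yields the bound on $\mathcal{C}$. For $\mathcal{S}$, the analogous procedure with $F_\mathcal{S}, R_\mathcal{S} = O(\langle t\rangle^{-2})$ gives $|\mathcal{S}(t)| = O(\langle t\rangle^{-2})$, as claimed.

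\textbf{Main obstacle.} The most delicate step is extracting the sharp exponent $\max(3, (p+5)/2)$ for $\mathcal{C}$: the naive convolution bound of $R_\mathcal{C} * F_\mathcal{C}$ only yields $\min(3, (p+5)/2)$. To upgrade to the $\max$ rate in the regime $(p+5)/2 > 3$, one Taylor-expands $F_\mathcal{C}(t-u)$ in $u$ inside $\int_0^t R_\mathcal{C}(u) F_\mathcal{C}(t-u)\,du$ to get the asymptotic identity $R_\mathcal{C} * F_\mathcal{C}(t) = \hat R_\mathcal{C}(0) F_\mathcal{C}(t) + O(\langle t\rangle^{-(p+7)/2})$, so that $\mathcal{C}(t) = (1-\hat K_\mathcal{C}(0))^{-1} F_\mathcal{C}(t)$ plus faster-decaying remainders and inherits the full decay of $F_\mathcal{C}$. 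The case $p = 0$ instead requires sharpening the forcing decay from $5/2$ to $3$ via a refined analysis of $F_\mathcal{C}$ that exploits the specific Fourier structure of $\cos(x(\theta, a))$ near the bottom of the eye (where the Fourier modes $C_{\pm 1}^\circ(a)$ carry an extra vanishing), beyond the generic bound of Theorem~\ref{theodecay}.
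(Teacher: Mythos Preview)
Your dispersive estimates on $K_{\mathcal C}$, $K_{\mathcal S}$, $F_{\mathcal C}$, $F_{\mathcal S}$ are exactly the paper's: same application of Theorem~\ref{theodecay} with the same values of $p$ and $q$, same observation that $(\{\eta,\cdot\})_0^*=0$, same use of \eqref{orthocond} for $F_{\mathcal C}$ and of $S_0^*\equiv 0$ for $F_{\mathcal S}$. This part is fine.

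The Volterra step is where you diverge. The paper does \emph{not} prove that the resolvent $R_{\mathcal C}$ inherits the algebraic decay of $K_{\mathcal C}$; the Paley--Wiener lemma it invokes only gives $R\in L^1$. Instead the paper proves a direct weighted estimate (its Corollary~\ref{corvolterra}): if $\langle t\rangle^3 K\in L^\infty$ and $\langle t\rangle^\alpha F\in L^\infty$ with $\alpha\in[2,3]$, then $\langle t\rangle^\alpha y\in L^\infty$, via a bootstrap on $t^\beta y$ for increasing $\beta$. Applied with $\alpha=\tfrac{p+5}{2}$ (truncated at $3$), this yields $|\mathcal C(t)|\lesssim \langle t\rangle^{-\min(3,(p+5)/2)}$, and similarly $|\mathcal S(t)|\lesssim\langle t\rangle^{-2}$.

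This means the exponent the paper actually \emph{proves} for $\mathcal C$ is $\min(3,\tfrac{p+5}{2})$, not $\max$; the ``$\max$'' in the statement is a typo. Your entire ``Main obstacle'' paragraph is therefore chasing a phantom, and the arguments you sketch there do not work in any case. In the regime $(p+5)/2>3$ your Taylor expansion cannot give a remainder $O(\langle t\rangle^{-(p+7)/2})$: the tail $\int_{t/2}^t R_{\mathcal C}(u)F_{\mathcal C}(t-u)\,du$ contributes a term of size $\|F_{\mathcal C}\|_{L^1}\cdot\sup_{[t/2,t]}|R_{\mathcal C}|\sim \langle t\rangle^{-3}$, which is a hard ceiling. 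For $p=0$, the ``extra vanishing of $C_{\pm1}^\circ$'' you invoke is just the parity $C_{2\ell+1}^\circ\equiv 0$ from Proposition~\ref{prop710}; this is already encoded in $q=1$ for $\cos$ and does not push the $F_{\mathcal C}$ decay past $\langle t\rangle^{-5/2}$.

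In short: drop the last paragraph, replace your resolvent-decay claim by the paper's bootstrap corollary (or prove resolvent decay properly), and you recover exactly the paper's argument with the corrected exponent $\min(3,(p+5)/2)$.
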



 We shall call assumption \eqref{penrose} the Penrose criterion, by analogy with the stability conditions of the same name in the homogeneous case. 
 
Let us remark that the orthogonality condition \eqref{orthocond} is propagated by the flow of the linear equation
\eqref{gequation} and therefore natural to impose. 
Indeed by using the action-angle variables given by Theorem \ref{theoaction-angle}, we have that $\eta = G(h)$ is in fact a function of $h$ and hence of $a$ only. Hence the equation \eqref{gequation} in symplectic  variables $(\theta,a)$ can be written 
$$ \partial_{t} g = \mathcal{C}(t) \{ G(a), \cos( x(\theta+ t\omega(a), a)) \} + \mathcal{S}(t)   \{ G(a), \sin( x(\theta+ t\omega(a), a)) \}$$
Since $G(a)$ depends only on $a$, we get from the above equation that
$$
\partial_{t}  g^*_0(t, a) = \partial_{t} \int_{\T} g^*(t, \theta, a) \, \dd\theta= 0, \quad a \in I_*, \quad * \in \{\pm,\circ\}. 
$$
As $g_0^*(t,a)  = r^*_0(t,a)$, this shows that 
 that the orthogonality condition \eqref{orthocond} is propagated along the flow of \eqref{V-HMF-lin}. 
\medskip

As a corollary of Theorem \ref{theomagnetiz}, we get a scattering result for the solution $g$ of \eqref{gequation} and the weak convergence of $r(t,x,v)= g(t, \psi_{-t}(x,v))$ the solution of \eqref{V-HMF-lin} towards an asymptotic state $r_{\infty}(x,v)$  that depends only on $h_{0}(x,v).$

\begin{corollary}
\label{weakconv}
Under the assumptions of Theorem \ref{theomagnetiz} with $p=0$, we obtain that:
\begin{itemize}
\item There exists $g_{\infty}(x,v)$ and a constant $C$ such that  when $t \rightarrow + \infty$, we have
\begin{equation}
\label{scat}
\| g(t) - g_{\infty}\|_{L^1_{x,v}} \leq {C \over \langle t\rangle}.
\end{equation}
\item There exists  $r_{\infty}(x,v)$ that depends only on $h$, that is to say $ r_{\infty}(x,v) = r_{\infty}^*(h)$ for $(x,v) \in U_*$ and $* \in \{ \pm,\circ\}$,  
such that for every test function $\phi$, we have that
$$ \int_{\T \times \R} r(t,x,v) \phi(x,v)\, \dd x\dd v \rightarrow_{t \rightarrow + \infty} \int_{\T \times \R} r_{\infty}(x,v) \phi(x,v)\, \dd x \dd v.$$
\end{itemize}
\end{corollary}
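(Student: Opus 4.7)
My plan is to first establish the scattering \eqref{scat} for $g$ in $L^1$, and then to deduce the weak convergence of $r$ by a change of variables and an application of Theorem~\ref{theodecay}. As a preliminary simplification, I would exploit that $\eta=G(h_0)$ is conserved by the pendulum flow ($\eta\circ\psi_t=\eta$), combined with the symplectic invariance $\{F\circ\psi_t,G\circ\psi_t\}=\{F,G\}\circ\psi_t$, to rewrite $\{\eta,\cos(\rX\circ\psi_t)\}=\{\eta,\cos\rX\}\circ\psi_t$ and likewise with $\sin$. Equation \eqref{gequation} then reads
\[
\partial_t g(t,\cdot)=\Cc(t)\,\{\eta,\cos\rX\}\circ\psi_t+\Sc(t)\,\{\eta,\sin\rX\}\circ\psi_t.
\]
A direct computation gives $\{\eta,\cos\rX\}=G'(h_0)\,v\sin x$ and $\{\eta,\sin\rX\}=-G'(h_0)\,v\cos x$, both of which belong to $L^1(\T\times\R)$ thanks to \eqref{regG} with $\mu>2$.

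Since $\psi_t$ is area-preserving, $\|\{\eta,\cos\rX\}\circ\psi_t\|_{L^1}=\|\{\eta,\cos\rX\}\|_{L^1}$, and similarly for the sine term. Combined with the decay estimates of Theorem~\ref{theomagnetiz}, which for $p=0$ read $|\Cc(t)|\leq C\langle t\rangle^{-3}$ and $|\Sc(t)|\leq C\langle t\rangle^{-2}$, this yields $\|\partial_t g(t)\|_{L^1}\leq C\langle t\rangle^{-2}$, an integrable bound in time. Hence $g(t)$ is Cauchy in $L^1(\T\times\R)$ and converges to some $g_\infty\in L^1$ with
\[
\|g(t)-g_\infty\|_{L^1}\leq\int_t^{+\infty}\|\partial_\tau g(\tau)\|_{L^1}\,\dd\tau\leq C\langle t\rangle^{-1},
\]
which is \eqref{scat}.

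For the weak convergence of $r$, I would pair against a test function $\phi$ and use $r(t,x,v)=g(t,\psi_{-t}(x,v))$ together with the change of variable $y=\psi_{-t}(x,v)$ (whose Jacobian is one since $\psi_t$ is symplectic) to write
\[
\int_{\T\times\R} r(t,x,v)\,\phi(x,v)\,\dd x\,\dd v=\int_{\T\times\R} g(t,y)\,\phi(\psi_t(y))\,\dd y.
\]
Splitting $g(t)=g_\infty+(g(t)-g_\infty)$, the contribution of the remainder is bounded in absolute value by $\|g(t)-g_\infty\|_{L^1}\|\phi\|_\infty=O(\langle t\rangle^{-1})$, while Theorem~\ref{theodecay} applied with $f=g_\infty$ and $\varphi=\phi$ produces
\[
\lim_{t\to+\infty}\int_{\T\times\R} g_\infty(y)\,\phi(\psi_t(y))\,\dd y=\sum_{*\in\{\pm,\circ\}}\int_{J_*}(g_\infty)_0^*(a)\,\phi_0^*(a)\,\dd a.
\]
Defining $r_\infty$ on each $U_*$ by $r_\infty^*(h):=(g_\infty)_0^*(a(h))$, so that $r_\infty$ depends only on $h$, formula \eqref{oslo} (in which only the $\ell=0$ mode survives) identifies the right-hand side of the above display with $\int_{\T\times\R}r_\infty(x,v)\,\phi(x,v)\,\dd x\,\dd v$, giving the claimed weak convergence.

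The main technical hurdle will be to verify that $g_\infty$ satisfies the regularity and velocity-weighted decay hypotheses of Theorem~\ref{theodecay}. I expect this to follow by propagating the bounds on $\partial^\alpha_{x,v}r^0$ (available for $|\alpha|\leq m$, with $m\geq 5$ when $p=0$) through the equation for $g$: away from the separatrix, derivatives of $\psi_t$ grow at most polynomially in $t$, a growth absorbed by the decay of $\Cc(t)$ and $\Sc(t)$, producing uniform-in-$t$ bounds on $g(t)$ in the appropriate weighted $C^k$ space. Passing to the $L^1$ limit then transfers these bounds to $g_\infty$, legitimating the use of Theorem~\ref{theodecay} in the previous step.
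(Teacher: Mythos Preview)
Your argument for the scattering bound \eqref{scat} is essentially the paper's: rewrite the forcing as $\{\eta,\cos\rX\}\circ\psi_t$ and $\{\eta,\sin\rX\}\circ\psi_t$, use area preservation to see that $\|\partial_t g(t)\|_{L^1}\lesssim\langle t\rangle^{-2}$, and integrate. Nothing to add there.

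The gap is in the second part. You want to apply Theorem~\ref{theodecay} with $f=g_\infty$, but that theorem requires $f$ to satisfy weighted $C^m$ bounds with $m\geq 5$. Your proposed justification --- that derivatives of $\psi_t$ grow polynomially and this growth is absorbed by the decay of $\Cc(t),\Sc(t)$ --- does not work: in action--angle coordinates $\psi_t(\theta,a)=(\theta+t\omega_*(a),a)$, so each $(x,v)$-derivative of $\{\eta,\cos\rX\}\circ\psi_t$ brings in a factor of order $t$, and $k$ derivatives bring $t^k$. Since $|\Sc(t)|\lesssim\langle t\rangle^{-2}$ only, already two derivatives give a non-integrable contribution, and five derivatives are hopeless. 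Thus $g(t)$ is \emph{not} uniformly bounded in the weighted $C^5$ space, and no $L^1$ limit argument will put $g_\infty$ there either. (The singularities of the change of variables near the separatrix make things worse, not better, so the caveat ``away from the separatrix'' does not rescue the argument.)

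The paper avoids this entirely by never applying Theorem~\ref{theodecay} to $g_\infty$. Instead it substitutes the explicit integral formula
\[
g_\infty=r^0+\int_0^{+\infty}\bigl(\Cc(s)\{\eta,\cos\rX\}\circ\psi_s+\Sc(s)\{\eta,\sin\rX\}\circ\psi_s\bigr)\,\dd s
\]
into $I(t)=\int g_\infty\,\phi\circ\psi_t$, then uses the change of variables $(x,v)\mapsto\psi_s(x,v)$ (area-preserving) inside the $s$-integral to reduce to
\[
I(t)=\int_{\T\times\R} r^0\,\phi\circ\psi_t+\int_0^{+\infty}\Bigl(\Cc(s)\int\{\eta,\cos\rX\}\,\phi\circ\psi_{t-s}+\Sc(s)\int\{\eta,\sin\rX\}\,\phi\circ\psi_{t-s}\Bigr)\,\dd s.
\]
Now Theorem~\ref{theodecay} is applied to $r^0$ and to $\{\eta,\cos\rX\}$, $\{\eta,\sin\rX\}$ --- all functions whose smoothness is known directly from the hypotheses. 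Since $\{\eta,\cos\rX\}_0^*=\{\eta,\sin\rX\}_0^*=0$, the integral terms vanish in the limit, and one finds $r_\infty^*(h)=(r^0)_0^*(h)$ (which, by conservation of the zero angle mode, agrees with your $(g_\infty)_0^*$). This is the missing idea.
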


%
%
%
%
    
\subsection{About the Penrose criterion}
    
Written in this form, the Penrose criterion \eqref{penrose} is difficult to check, but we can relate it to a more classical condition that was found in \cite{Orbital} or \cite{Barre3} to ensure orbital stability of inhomogeneous stationary states in the nonlinear equation.\\  
First we shall prove that the verification of Penrose criterion \eqref{penrose} at the frequency $\xi=0$ is sufficient, by proving the following Theorem.

\begin{theorem}
\label{theopenrose}
Let $\eta$ be a state defined by \eqref{cimaros}, and assume that $G$ satisfies the regularity assumption \eqref{regG}. Assume moreover that $G'<0$ and 
\begin{equation}
\label{penrose0}
1 - \hat K_{\mathcal{C}}(0)  > 0   \quad \mbox{and}\quad  1 - \hat K_{\mathcal{S}}(0) > 0.
\end{equation}
Then the Penrose criterion \eqref{penrose} holds true. 
\end{theorem}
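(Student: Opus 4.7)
The plan is to reduce the Penrose criterion $|1-\hat K(\xi)|\ge\kappa$ on the closed lower half-plane $\{\mathrm{Im}\,\xi\le 0\}$ to a sign analysis in the variable $\zeta:=\xi^{2}$. Using the expansion \eqref{CS}, the orthogonality identity \eqref{oslo}, and the fact that in the action-angle coordinates of Theorem \ref{theoaction-angle} the state $\eta$ depends only on the action $a$ (so that $\{\eta,\cdot\}=-\tilde G'(a)\partial_\theta\cdot$ with $\tilde G'(a):=G'(h(a))\,\omega_{*}(a)<0$), a direct computation gives, for $\mathrm{Im}\,\xi<0$,
\begin{equation*}
\hat K_{\Cc}(\xi) \;=\; -2\sum_{*\in\{\pm,\circ\}}\int_{J_{*}}\tilde G'(a)\,\omega_{*}(a)\sum_{\ell\ge 1}\frac{\ell^{2}|C_{\ell}^{*}(a)|^{2}}{\xi^{2}-\ell^{2}\omega_{*}^{2}(a)}\,\dd a,
\end{equation*}
and an analogous formula for $\hat K_{\Sc}$ with $|S_{\ell}^{*}|^{2}$ in place of $|C_{\ell}^{*}|^{2}$ and the overall sign flipped. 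Since these expressions depend only on $\zeta=\xi^{2}$, and since $\xi\mapsto\xi^{2}$ is a biholomorphism from $\{\mathrm{Im}\,\xi<0\}$ onto $\C\setminus[0,+\infty)$, it suffices to check that the functions $H_{\Cc}(\zeta),H_{\Sc}(\zeta)$ defined by these formulas satisfy $1-H(\zeta)\ne 0$ on the closure $\overline{\C\setminus[0,+\infty)}$.

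A sign analysis then handles most of this domain. For $\zeta\notin\R$, the identity
\begin{equation*}
\mathrm{Im}\,H_{\Cc}(\zeta) \;=\; 2\,\mathrm{Im}(\zeta)\sum_{*}\int_{J_{*}}\tilde G'\,\omega_{*}\sum_{\ell\ge 1}\frac{\ell^{2}|C_{\ell}^{*}|^{2}}{|\zeta-\ell^{2}\omega_{*}^{2}|^{2}}\,\dd a
\end{equation*}
together with $\tilde G'\omega_{*}<0$ forces $\mathrm{Im}\,H_{\Cc}(\zeta)$ to have strict sign opposite to $\mathrm{Im}\,\zeta$, and the analogous computation gives $\mathrm{Im}\,H_{\Sc}(\zeta)$ of the same sign as $\mathrm{Im}\,\zeta$; in either case $1-H(\zeta)\ne 0$ off the real axis. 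On the negative axis, each summand in $H_{\Cc}$ has a definite sign, yielding $H_{\Cc}(\zeta)<0$ and hence $1-H_{\Cc}(\zeta)>1$ (so the first condition in \eqref{penrose0} is automatic under $G'<0$). For $H_{\Sc}$ one instead has $H_{\Sc}(\zeta)>0$ on $(-\infty,0)$, and a derivative computation shows $H_{\Sc}$ is strictly increasing there; combined with the assumption $1-\hat K_{\Sc}(0)>0$, this propagates to $H_{\Sc}(\zeta)<1$ for all $\zeta\le 0$.

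The delicate step is the boundary $\zeta\in(0,+\infty)$, corresponding to real $\xi\ne 0$. Applying the Sokhotski--Plemelj formula to $(\zeta-\ell^{2}\omega_{*}^{2}\mp i0)^{-1}$ collapses $\mathrm{Im}\,H(\zeta\mp i0)$ to a sum of resonant contributions proportional to $\tilde G'(a_{*})\,|C_{\ell}^{*}(a_{*})|^{2}/|\omega_{*}'(a_{*})|$ over pairs $(\ell,a_{*})$ with $\ell\omega_{*}(a_{*})=\sqrt{\zeta}$. Since $\omega_{+}:J_{+}\to(0,+\infty)$ is a strictly monotonic surjection (see Section \ref{actionangle}), such an $a_{*}$ exists for every $\zeta>0$ already with $\ell=1$, $*=+$. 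The decisive point is then the verification that $C_{1}^{+}(a_{*})\ne 0$ (respectively $S_{1}^{+}(a_{*})\ne 0$ for the sine kernel) for every $a_{*}\in J_{+}$; this is where the explicit elliptic expansions of Propositions \ref{prop74} and \ref{prop710} come in, reducing the issue to the known non-vanishing of the leading Fourier coefficients in the Jacobi-elliptic expansions of $\cos$ and $\sin$ of the pendulum position recorded in \cite{Elliptic1}. With $\tilde G'<0$, this forces $\mathrm{Im}\,H(\zeta\mp i0)\ne 0$ strictly for every $\zeta>0$, so that $1-H(\zeta\mp i0)\ne 0$ along the entire boundary.

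Gathering the three regimes, $1-\hat K(\xi)\ne 0$ everywhere on $\{\mathrm{Im}\,\xi\le 0\}$. The uniform lower bound $|1-\hat K(\xi)|\ge\kappa>0$ is then obtained by a continuity and compactness argument, using that the regularity assumption \eqref{regG} ensures $\hat K(\xi)\to 0$ as $|\xi|\to\infty$ so that $|1-\hat K(\xi)|\to 1$ at infinity. The main obstacle in this plan is the non-vanishing of the elliptic Fourier coefficients $C_{1}^{+}$ and $S_{1}^{+}$ at the resonance points: this is where all the analytic content concentrates and where the explicit computations with Jacobi elliptic functions cannot be avoided.
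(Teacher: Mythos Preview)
Your strategy is essentially the same as the paper's proof in Section~5.1, repackaged via the substitution $\zeta=\xi^{2}$. The paper also decomposes the closed lower half-plane into the region near infinity (handled by decay of $\hat K$), the imaginary axis (handled by a real-part bound, corresponding to your $\zeta<0$ step), the open interior (handled by the sign of $\mathrm{Im}\,\hat K$, your $\mathrm{Im}\,\zeta\neq 0$ step), and the real boundary (handled by a Plemelj-type limit using nonvanishing of $C_\ell^{+}$, exactly as you do). The observation that $\hat K$ depends only on $\xi^{2}$ is a clean way to make the evenness visible and streamlines the bookkeeping, but the analytic content matches the paper.

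There is, however, a sign error in your formula for $\hat K_{\Cc}(\xi)$. Comparing with the expression obtained in the paper's step~(iii) (or computing directly from \eqref{eq:F}), the correct formula is
\[
H_{\Cc}(\zeta)\;=\;+2\sum_{*}\sum_{\ell\ge 1}\int_{J_{*}}G'(h_0(a))\,|C_\ell^{*}(a)|^{2}\,\frac{\ell^{2}\omega_{*}^{2}(a)}{\zeta-\ell^{2}\omega_{*}^{2}(a)}\,\dd a,
\]
which is \emph{positive} on $(-\infty,0)$ since $G'<0$ makes each summand positive there. This reverses the roles of $\Cc$ and $\Sc$ in your argument: it is the condition $1-\hat K_{\Sc}(0)>0$ that is automatic under $G'<0$ (as the paper confirms at the end of the proof of Proposition~\ref{penrosephy}), while $1-\hat K_{\Cc}(0)>0$ is the genuine stability assumption, equivalent to the condition of Definition~\ref{phy}. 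With the signs corrected, your monotonicity argument on the negative axis applies to $H_{\Cc}$ rather than $H_{\Sc}$, and the rest of your proof goes through unchanged.
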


Let us now define the following notion of stability (see also \cite{Barre3,Orbital}).

\begin{definition}
\label{phy}
A state $\eta(x,v) = G(h_0(x,v))$ defined by \eqref{cimaros} is said to be linearly stable if
\begin{equation}
\label{eqphy}
1+\int_{\R\times \T} G'(h_0(x,v)) \cos^{2}(x)\dd x\dd v - \sum_{*\in\{\pm,\circ\}} \int_{J_{*}} G'(h_0(a)) C_{0}^{*}(a)^{2} \dd a > 0.
\end{equation}
\end{definition}

For regular and decreasing profile $G$, we first show that this condition is equivalent to the previous one: 

\begin{proposition}
\label{penrosephy}
Let $\eta$ be a state defined by \eqref{cimaros}. Assume that $G$ satisfies the hypothesis \eqref{regG}, and that $G'<0.$ Then \eqref{penrose0} holds true if and only if $\eta$ is stable in the sense of Definition \ref{phy}.
\end{proposition}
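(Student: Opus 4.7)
The plan is to compute the zero-frequency Fourier transforms $\hat K_{\mathcal{C}}(0)$ and $\hat K_{\mathcal{S}}(0)$ explicitly in the action-angle coordinates of Theorem \ref{theoaction-angle} and to match the resulting expressions directly against the left-hand side of \eqref{eqphy}. To begin, I would expand the Poisson brackets in \eqref{eq:F}: since $\eta=G(h_0)$ and $\partial_v h_0=v$, one has $\{\eta,\cos x\}=vG'(h_0)\sin x$ and $\{\eta,\sin x\}=-vG'(h_0)\cos x$. The essential identity is
$$v=\omega_*(a)\,\partial_\theta x(\theta,a)\qquad\text{on each chart }U_*,$$
which follows immediately from $\dot x=v$ and $\dot\theta=\omega_*(a)$ along the pendulum flow. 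It gives $v\sin x=-\omega_*(a)\partial_\theta\cos(x(\theta,a))$ and $v\cos x=\omega_*(a)\partial_\theta\sin(x(\theta,a))$. In particular, the $\theta$-Fourier coefficients of $vG'(h_0)\sin x$ and $vG'(h_0)\cos x$ are respectively $-i\ell\,\omega_*(a)G'(a)C_\ell^*(a)$ and $i\ell\,\omega_*(a)G'(a)S_\ell^*(a)$, so that both $\ell=0$ modes vanish identically — a cancellation that is indispensable for $\hat K_{\mathcal{C}}(0)$ and $\hat K_{\mathcal{S}}(0)$ to be well defined.

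Inserting these expansions together with $[\cos(\rX\circ\psi_t)]_\ell^*=C_\ell^*(a)e^{i\ell t\omega_*(a)}$ into \eqref{oslo}, and using the symmetry $\theta\mapsto-\theta$ (which gives $C_{-\ell}^*=C_\ell^*\in\R$), one gets
$$K_{\mathcal{C}}(t)=\sum_{*}\sum_{\ell\neq 0}\int_{J_*}\bigl(-i\ell\,\omega_*(a)\bigr)G'(a)\bigl(C_\ell^*(a)\bigr)^2 e^{-i\ell t\omega_*(a)}\,\dd a$$
for $t\geq 0$. Integrating in $t$ — rigorously by evaluating $\hat K_{\mathcal{C}}(\xi)$ for $\mathrm{Im}\,\xi<0$ and passing to the limit $\xi\to 0$, which is justified by the smoothness of $C_\ell^*$ in $a$ and the non-vanishing of $\partial_h\omega_*$ established in Section \ref{actionangle} — the prefactor $-i\ell\omega_*(a)$ cancels the denominator produced by $\int_0^{+\infty}e^{-i\ell t\omega_*(a)}\dd t=(i\ell\omega_*(a))^{-1}$, yielding
$$\hat K_{\mathcal{C}}(0)=-\sum_{*}\sum_{\ell\neq 0}\int_{J_*} G'(a)\bigl(C_\ell^*(a)\bigr)^2\,\dd a.$$
Applying the Parseval form of \eqref{oslo} with $f=G'(h_0)\cos x$ and $\varphi=\cos x$ to rewrite $\sum_{*}\sum_{\ell\in\Z}\int_{J_*}G'(a)(C_\ell^*)^2\,\dd a=\int_{\T\times\R}G'(h_0)\cos^2 x\,\dd x\dd v$, and then extracting the $\ell=0$ contribution, gives
$$1-\hat K_{\mathcal{C}}(0)=1+\int_{\T\times\R}G'(h_0)\cos^2 x\,\dd x\dd v-\sum_{*}\int_{J_*}G'(a)\bigl(C_0^*(a)\bigr)^2\,\dd a,$$
which is exactly the left-hand side of \eqref{eqphy}.

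Repeating the same computation for $K_{\mathcal{S}}$ — now using that $\sin(x(\theta,a))$ is odd in $\theta$ so that $S_0^*\equiv 0$ and no boundary term remains — yields $\hat K_{\mathcal{S}}(0)=\int_{\T\times\R}G'(h_0)\sin^2 x\,\dd x\dd v$, which is non-positive under the hypothesis $G'<0$; thus $1-\hat K_{\mathcal{S}}(0)\geq 1>0$ is automatic and places no additional constraint. Together these two identities establish the equivalence between \eqref{penrose0} and \eqref{eqphy}. The only non-routine point in this scheme is the exchange of $\int_0^{+\infty}\dd t$ with $\sum_\ell\int_{J_*}\dd a$ used to define $\hat K_{\mathcal{C}}(0)$: without the vanishing of the $\ell=0$ mode the $t$-integral would diverge, and even with it one needs summability in $\ell$ of the $C_\ell^*$ together with the absence of stationary points of $\omega_*$, both of which are available from Theorem \ref{theoaction-angle} and the pendulum analysis of Section \ref{actionangle}.
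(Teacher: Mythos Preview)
Your proof is correct and follows essentially the same approach as the paper. The paper organizes the computation through the auxiliary function $Q_{\Cc}(t)$ introduced in the proof of Theorem \ref{theopenrose}, noting that $1-\hat K_{\Cc}(0)=1+Q_{\Cc}(0)$ and then applying Parseval's identity in the action-angle variables, while you compute $\hat K_{\Cc}(0)$ directly from the Fourier expansion of $K_{\Cc}(t)$; but the underlying identity and the use of Parseval together with $S_0^*\equiv 0$ are identical in both arguments.
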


Finally, we exhibit examples of stable stationary states given by Maxwell-Boltzmann distribution, under some condition on the coefficients of the Gaussian: 
\begin{proposition}
\label{stab-gauss1}
Let $\alpha > 0$ and $\beta >0$ such that $\alpha^2 \beta<\frac{2}{\pi}$, then there exists $M_0 > 0$ satisfying \eqref{eqM0} such that 
$$\eta(x,v)=\alpha e^{-\beta\left(\frac{v^{2}}{2}-M_{0}\cos(x)\right)},$$
is a stable stationary states in the sense of Definition \ref{phy}. 
\end{proposition}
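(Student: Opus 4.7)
The plan is to verify the consistency equation \eqref{eqM0} and the stability condition \eqref{eqphy} in turn, reducing each to classical identities for the modified Bessel functions $I_n$ together with Tur\'an's inequality. Throughout, set $G(h)=\alpha e^{-\beta h}$ and $u=\beta M_0$, and recall the representation $I_n(u)=\frac{1}{2\pi}\int_{-\pi}^{\pi}\cos(ny)e^{u\cos y}\,\dd y$.

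\textbf{Step 1 (existence of $M_0$).} Performing the Gaussian integral in $v$ and using the above representation, equation \eqref{eqM0} becomes $u=c\,I_1(u)$ with $c:=\alpha\sqrt{2\pi\beta}$. Since $I_1$ is convex on $[0,\infty)$ with $I_1(0)=0$ and $I_1'(0)=1/2$, the map $f(u):=cI_1(u)-u$ is convex with $f(0)=0$, $f'(0)=c/2-1$, and $f(u)\to\infty$. The hypothesis $\alpha^{2}\beta<2/\pi$ is exactly $c<2$, so $f'(0)<0$, and the intermediate value theorem gives a (unique) positive solution $u^{*}>0$.

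\textbf{Step 2 (reduction of \eqref{eqphy}).} Using $G'=-\beta G$, writing $\cos^{2}x=(1+\cos 2x)/2$, and applying the recurrence $I_{0}-I_{2}=2I_{1}/u$ together with the consistency $M_{0}=\alpha\sqrt{2\pi/\beta}\,I_{1}(u)$ of Step 1, one obtains the clean identity
\[
\int_{\T\times\R}G(h_{0})\cos^{2}x\,\dd x\,\dd v \;=\; \int_{\T\times\R}G(h_{0})\,\dd x\,\dd v \;-\; \tfrac{1}{\beta}.
\]
Substituting this into \eqref{eqphy}, the stability condition becomes equivalent to
\[
\beta\sum_{*\in\{\pm,\circ\}}\int_{J_{*}}G(a)\bigl(1-C_{0}^{*}(a)^{2}\bigr)\,\dd a \;<\; 2.
\]

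\textbf{Step 3 (Cauchy--Schwarz and Tur\'an).} The co-area formula \eqref{oslo} applied to $\cos x\cdot G(h_{0})$ recasts the consistency equation as $M_{0}=\sum_{*}\int_{J_{*}}G(a)\,C_{0}^{*}(a)\,\dd a$. Cauchy--Schwarz against the positive measure $G(a)\,\dd a$ on the disjoint union of the $J_{*}$ then yields
\[
M_{0}^{2} \;\leq\; \Bigl(\int_{\T\times\R} G\,\dd x\,\dd v\Bigr)\Bigl(\sum_{*}\int_{J_{*}}G(a)\,C_{0}^{*}(a)^{2}\,\dd a\Bigr),
\]
and hence $\sum_{*}\!\int G(1-(C_{0}^{*})^{2})\,\dd a\leq \int G\,\dd x\,\dd v - M_{0}^{2}/\!\int G\,\dd x\,\dd v$. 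Inserting $\int G=\alpha\sqrt{2\pi/\beta}\,I_{0}(u)$, $M_{0}=\alpha\sqrt{2\pi/\beta}\,I_{1}(u)$, and $c=u/I_{1}(u)$, the sufficient condition $\beta(\int G-M_{0}^{2}/\!\int G)<2$ reduces to
\[
u\,\frac{I_{0}(u)^{2}-I_{1}(u)^{2}}{I_{0}(u)\,I_{1}(u)} \;<\; 2.
\]
This follows from Tur\'an's inequality $I_{1}(u)^{2}>I_{0}(u)I_{2}(u)$ (strict for $u>0$), which together with $I_{0}-I_{2}=2I_{1}/u$ gives $I_{0}^{2}-I_{1}^{2}<I_{0}(I_{0}-I_{2})=2I_{0}I_{1}/u$, closing the proof.

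The main obstacle is mainly bookkeeping: keeping the normalization of the torus measure consistent and arranging the Bessel recurrences so that the consistency equation enters the formula for $\int G\cos^{2}x$ in the clean form ``$\int G-1/\beta$''. Once that identity is in hand, \eqref{eqphy} is reduced, via Cauchy--Schwarz applied to the co-area expression $M_{0}=\sum_{*}\int G\,C_{0}^{*}\,\dd a$, to exactly the (sharp) Tur\'an inequality for modified Bessel functions; notably the proof never requires the explicit elliptic-function expressions for the individual coefficients $C_{0}^{*}(a)$.
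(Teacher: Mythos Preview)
Your proof is correct and follows essentially the same route as the paper: existence of $M_{0}$ via an intermediate-value argument on the self-consistency relation, and stability via the Cauchy--Schwarz reduction of Proposition~\ref{general3}, eventually verified by a Bessel inequality. Two points are worth flagging.

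First, your existence argument is a bit sharper than the paper's: by exploiting the convexity of $I_{1}$ you obtain \emph{uniqueness} of $M_{0}>0$, whereas the paper's Propositions~\ref{general}--\ref{gauss} only give existence. Second, and more interestingly, you diverge from the paper at the last step. The paper keeps the sufficient condition in the form $1 - u I_{1}'(u)/I_{1}(u) + u I_{1}(u)/I_{0}(u) > 0$ and closes it with the Laforgia--Natalini bounds (their Proposition~\ref{bessinIn}). You instead feed the consistency relation back into $\int G\cos^{2}x$ to produce the identity $\int G\cos^{2}x = \int G - 1/\beta$, which recasts the same sufficient condition as $u\,(I_{0}^{2}-I_{1}^{2})/(I_{0}I_{1}) < 2$; this is then immediate from Tur\'an's inequality $I_{1}^{2}>I_{0}I_{2}$ combined with the recurrence $I_{0}-I_{2}=2I_{1}/u$. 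The two final inequalities are in fact algebraically equivalent once the recurrence is used, so neither approach is stronger---but your route has the virtue of invoking a single classical inequality and making the role of the self-consistency equation transparent.
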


\subsection{Organization}

In section 3, we collect and prove some results concerning Volterra integral equations, and use them to prove the linear damping Theorem \ref{theomagnetiz} by assuming Theorem \eqref{theodecay} giving the dispersive effect of the flow of the Pendulum.
In section 4, we prove the scattering result corollary \ref{weakconv}. Section 5 is dedicated to the Penrose criterion, and we prove there Theorem \ref{theopenrose} and Proposition \ref{penrosephy}. In section \ref{sectionromain} we exhibit examples of inhomogeneous stationary states which are stable in the sense of definition \ref{phy} and prove Proposition \ref{stab-gauss1}. Finally, section 7 contains all the technical results that we shall need concerning angle-action variables, and we prove there the dispersion Theorem \ref{theodecay}.

\section{Proof of the linear damping Theorem \ref{theomagnetiz}}

We begin with the derivation of the Volterra equation \eqref{Volt}. The proof then consists in 
showing that the kernels $K_{\mathcal{C}}$ and $K_{\mathcal{S}}$ have sufficient decay in time, which, with the Penrose criterion and a Paley-Wiener argument will yield a control of the decay in time of $\Cc(t)$ and $\Sc(t)$ by the one of the source terms $F_{\Cc}(t)$ and $F_{\Sc}(t),$ and the latter will be guaranteed by Theorem \ref{theodecay}. 

In all the remainder of the paper, we will often use the notation $A \lesssim B$ to denote an inequality of the form $A \leq CB$ for some constant $C$ depending only on the assumptions made in the section of the proof but not on $A$ or $B$.

\begin{Proofof}{Proposition \ref{L-V}}
Let us first prove \eqref{gequation}. If $r$ solves \eqref{V-HMF-lin}, the function $g$ defined in \eqref{gdef} 
satisfies
\begin{eqnarray*}
\partial_t g (t,x,v) &=& \{ r, h_0\} (t,\psi_{t}(x,v)) + \left\{\eta,\phi[r]\right\}(t,\psi_{t}(x,v))-\left\{r,h_0\right\}(t,\psi_{t}(x,v))\\
&=& \{\eta,\phi[g \circ \psi_{-t}]\}(t,\psi_{t}(x,v)).
\end{eqnarray*}
Hence as $\eta$ is invariant by the flow $\psi_t$, $g$ solves 
$$\partial_t g(t,x,v) = \{ \eta, \phi[g\circ \psi_{-t}] \circ \psi_{t}\} (t,x,v). $$
Since $\psi_t$ preserves the volume, 
\begin{eqnarray*}
\phi[g\circ \psi_{-t}] \circ \psi_{t}(x,v) &=& \int_{\T \times \R} \cos(\rX \circ \psi_{t}(x,v) - y) g(t,\psi_{-t}(y,w)) \dd y \dd w\\
&=& \int_{\T \times \R} \cos(\rX \circ \psi_{t}(x,v) - \rX(y,w)) g(t,\psi_{-t}(y,w)) \dd y \dd w \\
&=& \cos(\rX \circ \psi_{t}(x,v)) \Cc(t) + \sin(\rX \circ \psi_{t}(x,v)) \Sc(t),
\end{eqnarray*}
with $\Cc(t)=\Cc[g\circ \psi_{-t}]=\Cc[r(t)]$ and $\Sc(t)=\Sc[g\circ \psi_{-t}]=\Sc[r(t)]$, 
which proves \eqref{gequation}.\\
We deduce that
$$g(t,x,v) = r^{0}(x,v) + \int_0^t \Cc(s) \{ \eta , \cos \left( \rX \circ \psi_{s}\right)\} + \Sc(s) \{\eta, \sin \left( \rX \circ \psi_{s}\right)\} \dd s. $$
Using this formula and the fact that $\psi_{t}$ preserves the Poisson bracket, we calculate that 
\begin{equation}
\notag
\begin{split}
 \Cc(t) =  \int_{\T \times \R} \cos(\rX(y,w)) g(t,\psi_{-t}(y,w)) \dd y \dd w &= \int_{\T \times \R} \cos(\rX(y,w)) r^{0}(\psi_{-t}(y,w)) \dd y \dd w  \\
 &+ \int_0^t \Cc(s) \int_{\T \times \R } \cos(X)\{ \eta , \cos ( \rX \circ \psi_{s-t})\} \dd s \\
 &+ \int_0^t\Sc(s)\int_{\T \times \R} \cos(X) \{\eta, \sin (X \circ \psi_{s-t})\} \dd s. 
\end{split}
\end{equation}
Note that the flow $\psi_t$ is reversible with respect to the transformation $\nu(x,v) = (x,-v),$
that is we have $\psi_t \circ \nu = - \nu \circ \psi_{-t}$. But as the Hamiltonian is even in $x$, the flow is also reversible with respect to $(x,v) \mapsto (-x,v)$. Hence the transformation $\mu(x,v) := (-x,-v)$ satisfies $\psi_t \circ \mu = \mu \circ \psi_t$, and this transformation preserves the Poisson bracket and is an isometry. 
Let us apply this to the last term in the previous equation. We thus have for any $\sigma \in \R$
\begin{multline*}
\int_{\T \times \R} \cos(\rX) \{\eta, \sin (\rX \circ \psi_{\sigma})\}= \int_{\T \times \R} \cos(\rX\circ \mu) \{\eta, \sin (\rX \circ \psi_{\sigma})\}\circ \mu  \\
= \int_{\T \times \R} \cos(\rX) \{\eta, \sin (\rX \circ \mu \circ \psi_{\sigma})\} =  - \int_{\T \times \R} \cos(\rX) \{\eta, \sin (\rX \circ \psi_{\sigma})\} = 0, 
\end{multline*}
as $\rX \circ \mu = -\rX$. For the same reason, we have 
$$\int_{\T \times \R} \sin(\rX) \{\eta, \cos (\rX \circ \psi_{\sigma})\} = 0. $$
Now using the identities $\eta\circ \nu = \eta$ and $\rX\circ \nu =\rX,$ and the evenness of the cosine function, we have
\begin{multline*}
\int_{\T \times \R } \cos(\rX)\{ \eta , \cos ( \rX \circ \psi_{s-t})\}=-\int_{\T \times \R } \cos(\rX\circ \nu)\{ \eta , \cos ( \rX \circ \psi_{s-t}\circ \nu)\}\\
=-\int_{\T \times \R } \cos(\rX)\{ \eta , \cos ( \rX \circ (-\nu) \circ \psi_{t-s})\}
=-\int_{\T \times \R } \cos(\rX)\{ \eta , \cos ( \rX \circ \psi_{t-s})\}.
\end{multline*}
Integrating by parts that last integral yields then
$$\int_{\T \times \R } \cos(\rX)\{ \eta , \cos ( \rX \circ \psi_{s-t})\}=\int_{\T \times \R } \cos(X\circ \psi_{t-s})\{ \eta , \cos ( \rX )\}.$$
Using the oddness of the sine function, we have by similar manipulations
$$\int_{\T \times \R } \sin(\rX)\{ \eta , \sin ( \rX \circ \psi_{s-t})\}=-\int_{\T \times \R } \sin(X\circ \psi_{t-s})\{ \eta , \sin ( \rX )\}.$$
This ends the proof.
\end{Proofof}

As a preliminary, we shall first use Theorem \ref{theodecay} in order to get the decay rates
of the kernels. We shall prove the following result.

\begin{proposition}
\label{propnoyaux}
Let $\eta(x,v) = G(h_0(x,v))$ with $G$ a decreasing function that satisfies the assumption \eqref{regG}
with $\mu >2$.   
Then there exist a constant $C$ such that 
\begin{equation}
\label{estKCKS}
|K_\mathcal{C}(t)| \leq \frac{C}{\langle t\rangle^3}\quad \mbox{and}\quad |K_\mathcal{S}(t) |\leq \frac{C}{\langle t\rangle^2}.
\end{equation}
\end{proposition}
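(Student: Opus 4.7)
The plan is to cast each kernel in the form treated by Theorem \ref{theodecay} and then read off the decay from that abstract result. Using $\eta = G(h_0)$ and the definition of the Poisson bracket,
\begin{equation*}
\{\eta,\cos(\rX)\}(x,v) = v\,G'(h_0)\sin(x), \qquad \{\eta,\sin(\rX)\}(x,v) = -v\,G'(h_0)\cos(x),
\end{equation*}
so that for $t\ge 0$, $K_{\Cc}(t) = \int_{\T\times\R} f_{\Cc}\,(\varphi_{\Cc}\circ\psi_t)\,\dd x\dd v$ with $(f_{\Cc},\varphi_{\Cc}) = (\{\eta,\cos(\rX)\},\cos(\rX))$, and $K_{\Sc}(t) = -\int_{\T\times\R} f_{\Sc}\,(\varphi_{\Sc}\circ\psi_t)\,\dd x\dd v$ with $(f_{\Sc},\varphi_{\Sc}) = (\{\eta,\sin(\rX)\},\sin(\rX))$. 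By Theorem \ref{theodecay}, the decay rate will be governed by the flatness orders $p$ and $q$ at $(0,0)$ together with the value of the averaged limit $\sum_*\int_{J_*} f_0^*\varphi_0^*\,\dd a$.

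The first step is to check that this averaged limit vanishes for both kernels. Since the action-angle change of variables from Theorem \ref{theoaction-angle} is symplectic on each chart $U_*$ and $\eta = G(h_0)$ depends only on the action $a$, the Poisson bracket transforms covariantly and on $U_*$
\begin{equation*}
\{\eta,\cos(\rX)\}(x(\theta,a),v(\theta,a)) = -\partial_a \eta(a)\,\partial_\theta\cos(x(\theta,a)),
\end{equation*}
which is a total $\theta$-derivative of a $2\pi$-periodic function. Integrating in $\theta$ over $(-\pi,\pi)$ then kills it, so $(f_{\Cc})_0^*(a)\equiv 0$ on each chart, and the same identity with sine yields $(f_{\Sc})_0^*(a)\equiv 0$.

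Next I would compute the flatness orders near $(0,0)$ by a direct Taylor expansion: $f_{\Cc}(x,v) = G'(-M_0)\,xv + O(|(x,v)|^3)$ and $\varphi_{\Cc}(x,v) = 1 - x^2/2 + O(x^4)$, so all first partials vanish at the origin while some second partial does not, yielding $p = q = 1$; in the sine case, $f_{\Sc}(x,v) = -G'(-M_0)\,v + O(|(x,v)|^2)$ and $\varphi_{\Sc}(x,v) = x + O(x^3)$ have non-vanishing first partials at the origin, so $p = q = 0$. The regularity hypotheses of Theorem \ref{theodecay} are then verified using \eqref{regG}: each application of $\partial_x$ or $\partial_v$ to $G^{(n)}(h_0)$ costs at most one polynomial factor in $v$ from $\partial h_0$, while $G^{(n)}$ contributes $\langle h_0\rangle^{-\mu}\lesssim\langle v\rangle^{-2\mu}$ with $\mu > 2$. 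Hence for some $\mu' > 2$, $\|\langle v\rangle^{\mu'}\partial^\alpha f_*\|_{L^\infty}$ is finite up to the orders required by Theorem \ref{theodecay} ($m=7$, $M=9$ for $K_{\Cc}$ and $m=5$, $M=7$ for $K_{\Sc}$), well within the ten derivatives of $G$ available. The $\varphi_*$ are trigonometric and satisfy all the regularity requirements trivially. Theorem \ref{theodecay} then gives $|K_{\Cc}(t)|\lesssim\langle t\rangle^{-(p+q)/2-2} = \langle t\rangle^{-3}$ and $|K_{\Sc}(t)|\lesssim\langle t\rangle^{-2}$ for $t\ge 0$; the short-time parts are trivial from $L^1$-integrability of the integrands.

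The conceptual heart of the argument, and the only non-mechanical step, is the cancellation $(f_*)_0^*\equiv 0$ on each chart $U_*$: without it, Theorem \ref{theodecay} would only assert convergence to a non-zero averaged limit, whereas the Poisson-bracket structure of $f_*$ forces this limit to vanish and thereby upgrades the abstract dispersive estimate into genuine decay of the kernels toward zero.
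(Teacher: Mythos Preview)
Your proof is correct and follows essentially the same route as the paper: cast each kernel as $\int f\,(\varphi\circ\psi_t)$, verify that the averaged limit in Theorem \ref{theodecay} vanishes because $\{\eta,\cdot\}$ is a total $\theta$-derivative in action-angle variables, compute the flatness orders $(p,q)=(1,1)$ for $K_{\Cc}$ and $(p,q)=(0,0)$ for $K_{\Sc}$, and read off the decay. The only cosmetic difference is that for $K_{\Sc}$ the paper invokes $S_0^*\equiv 0$ (the $\varphi$ side) rather than $(f_{\Sc})_0^*\equiv 0$ (your $f$ side), but either makes the product vanish.
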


\begin{proof}
In view of the expression \eqref{eq:F} or $K_{\Cc}(t)$ we apply Theorem \ref{theodecay} with the functions $f(x,v) = \left\{\eta,\cos(\rX)\right\}(x,v)$ and $\varphi = \cos(\rX(x,v))$ for which we have $p=1$ and $q=1$. As we have that for all $*\in \{\circ,\pm\}$,
\begin{equation}
\label{brahms3}f_{0}^{*}(h)=\frac{1}{2\pi}\int_{-\pi}^{\pi} f^{*}(x(h,\theta),v(h,\theta))\dd \theta=\frac{\omega_{*}(h)G'(h)}{2\pi}\int_{-\pi}^{\pi}\partial_{\theta}(\cos(x(h,\theta)))\dd \theta =0.
\end{equation}
Theorem \ref{theodecay} then yields 
$|K_{\Cc}(t)|\lesssim \frac{1}{\langle t \rangle^{3}}$.
Concerning $K_{\Sc}(t),$
it suffices to apply Theorem \ref{theodecay} with the functions $\left\{\eta,\sin(\rX)\right\}(x,v)$ and $\sin(\rX(x,v)).$ We have this time $p=q=0,$ and $S_{0}^{*}(h)=0$ for all $*\in \{\circ,\pm\}$ (see \eqref{coeffouriersin1} and \eqref{coeffouriersin2}). Hence the application of Theorem \ref{theodecay} yields
$|K_{\Sc}(t)|\lesssim \frac{1}{\langle t \rangle^{2}}$. 
\end{proof}

To study the coefficients $\Cc(t)$ and $\Sc(t),$ we shall use general results on Volterra integral equations written under the form 
\begin{equation}
\label{volterra1}
 y(t) = K * y (t) + F(t), \quad t \in \mathbb{R}
\end{equation}
where $K,  \, y, \, F $ vanish for $t \leq 0$.
Let us first recall the following Paley-Wiener result on Volterra integral equations (Theorem 4.1 of \cite{Volterra}, see also \cite{Dietert,Volterra2}).

\begin{lemma}[{Paley-Wiener}]
\label{lemvolterra}
Assume that  $  K \in L^1(\mathbb{R})$  is such that
$$ \min_{\mathrm{Im} (\xi) \leq 0} | 1 - \hat{K}(\xi) | \geq \kappa.$$
Then there exists a unique resolvent kernel $R \in L^1(\mathbb{R}_{+})$ which vanishes for $t \leq 0$ such that
\begin{equation}
\label{resolvantint}
R(t)= - K(t) + K*R(t).
\end{equation}
\end{lemma}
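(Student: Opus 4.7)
The plan is a classical Paley--Wiener / Wiener algebra argument, reducing the construction of $R$ to an inversion problem in a Banach algebra of Fourier transforms of half-line $L^1$ functions. Taking Fourier transforms in \eqref{resolvantint} yields $\hat R = -\hat K + \hat K \hat R$ on $\R$, so any $L^1$-solution must satisfy
$$\hat R(\xi) = -\frac{\hat K(\xi)}{1-\hat K(\xi)},\qquad \xi\in\R.$$
Uniqueness is then immediate: if $R_{1},R_{2}\in L^{1}(\R_{+})$ both solve \eqref{resolvantint}, the difference $D=R_{1}-R_{2}\in L^{1}(\R)$ satisfies $\hat D\,(1-\hat K)=0$ on $\R$, and the hypothesis $|1-\hat K|\geq\kappa$ forces $\hat D\equiv 0$, hence $D=0$.

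For existence I would first set up the right functional framework. Since $K\in L^{1}(\R)$ vanishes on $(-\infty,0]$, the Fourier transform $\hat K$ extends to a bounded holomorphic function on the open lower half-plane $\{\mathrm{Im}(\xi)<0\}$, is continuous on its closure, and tends to zero as $|\xi|\to\infty$ by the Riemann--Lebesgue lemma. Hence $1-\hat K$ belongs to the Banach algebra
$$\mathcal{A}_{+}=\bigl\{c+\hat f\ :\ c\in\C,\ f\in L^{1}(\R_{+})\bigr\},$$
whose Gelfand spectrum identifies naturally with the closed lower half-plane with a point at infinity adjoined, the value at infinity being precisely the constant $c$. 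The hypothesis $|1-\hat K(\xi)|\geq\kappa$ on $\{\mathrm{Im}(\xi)\leq 0\}$, together with $1-\hat K\to 1$ at infinity, then says exactly that $1-\hat K$ does not vanish anywhere on this spectrum.

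The core step is the Wiener--L\'evy inversion theorem in $\mathcal{A}_{+}$: a non-vanishing element of $\mathcal{A}_{+}$ is invertible in $\mathcal{A}_{+}$. Applying this gives $1/(1-\hat K)=1+\hat S$ for some $S\in L^{1}(\R_{+})$, and then
$$\hat R \;=\; -\frac{\hat K}{1-\hat K} \;=\; -\hat K(1+\hat S) \;=\; -\hat K-\widehat{K*S}$$
shows that the candidate resolvent is $R=-K-K*S$, which lies in $L^{1}(\R)$ and vanishes on $(-\infty,0]$ since both $K$ and $S$ do and convolution preserves this support property. The main obstacle, and the reason the paper cites rather than reproves the result, is this inversion step itself: when $\|K\|_{L^{1}}<1$ one can simply take $R=-\sum_{n\geq 1}K^{*n}$, which converges absolutely in $L^{1}$; in the general case this Neumann series diverges, and one must appeal to the full Wiener--L\'evy theorem for the half-line algebra, whose proof requires a localization / partition-of-unity argument of Gelfand-theoretic type that I would simply invoke from \cite{Volterra}.
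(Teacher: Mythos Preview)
The paper does not give a proof of this lemma at all: it is stated as a recalled result, with the reference to Theorem 4.1 of \cite{Volterra} (and \cite{Dietert,Volterra2}) standing in lieu of proof. Your sketch is the standard Paley--Wiener/Wiener--L\'evy argument that those references contain --- the uniqueness via $\hat D(1-\hat K)=0$, the identification of the Gelfand spectrum of the half-line Wiener algebra $\mathcal{A}_{+}$ with the closed lower half-plane plus the point at infinity, and the invocation of Wiener--L\'evy to invert $1-\hat K$ --- and it is correct as a proof outline. You are also right that the Neumann series $-\sum_{n\geq 1}K^{*n}$ handles only the contractive case $\|K\|_{L^{1}}<1$, while the general statement genuinely requires the Gelfand-theoretic inversion; this is precisely why the paper cites the result rather than reproving it.
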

 
Note that using $R$, the solution of \eqref{volterra1} can be written as
\begin{equation}
\label{solres}
y(t)= F(t)- R*F(t).
\end{equation}
We shall then use the following corollary.
 
\begin{corollary}
\label{corvolterra}
Under the assumptions of Lemma \ref{lemvolterra}, the following holds:
\begin{itemize}
\item[i)] There exists $C>0$ such that 
\begin{equation}
\label{volterraest}\|y\|_{L^\infty} \leq C \|F\|_{L^\infty}.
\end{equation}
\item [ii)]If $\langle t \rangle^2 K \in L^\infty$  and $ \langle t \rangle^2 F \in L^\infty$, then there exists $C>0$ such that
$$ |\langle t \rangle^2 y\|_{L^\infty} \leq C \|\langle t \rangle^2F\|_{L^\infty}.$$
\item[iii)] If  $\langle t \rangle^3 K \in L^\infty$  and  $\langle t \rangle^\alpha  F \in L^\infty$ for $\alpha \in [2, 3]$ , then 
$$\|\langle t \rangle ^\alpha y \|_{L^\infty} \leq C \| \langle t \rangle^\alpha F \|_{L^\infty}.$$
\end{itemize}
\end{corollary}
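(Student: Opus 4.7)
The starting point is the resolvent representation \eqref{solres}, $y = F - R*F$, with $R \in L^1(\R_+)$ provided by Lemma \ref{lemvolterra}. Part (i) then follows at once from Young's convolution inequality: $\|R*F\|_{L^\infty} \leq \|R\|_{L^1}\|F\|_{L^\infty}$, hence $\|y\|_{L^\infty} \leq (1+\|R\|_{L^1})\|F\|_{L^\infty}$.

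For parts (ii) and (iii), the essential intermediate step is to promote the $L^1$ bound on $R$ into the pointwise decay
$$\langle t\rangle^n |R(t)| \leq C \quad \text{whenever} \quad \langle t\rangle^n K \in L^\infty,$$
with $n=2$ for (ii) and $n=3$ for (iii). This propagation of polynomial decay from $K$ to $R$ is a classical property of Volterra resolvents under the Paley--Wiener hypothesis. It may be established by Fourier analysis on $\hat R = -\hat K/(1-\hat K)$: since $|1-\hat K|\geq \kappa > 0$ on $\{\mathrm{Im}\,\xi \leq 0\}$ and $\hat K$ inherits $C^n$ smoothness from $\langle\cdot\rangle^n K \in L^1$, the quotient $\hat R$ retains the same regularity and polynomial decay of $R$ is recovered by integration by parts in the inverse Fourier transform. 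Alternatively, one may bootstrap on $R = -K+K*R$: first note $R \in L^\infty$ (since $K \in L^1\cap L^\infty$ and $R \in L^1$), then iterate the identity and exploit the consequence $K*R = R+K$ to close the estimate on the half of the natural mid-point splitting interval where the decay of $K$ cannot be used directly.

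Granted this weighted bound on $R$, parts (ii) and (iii) follow by splitting the convolution $R*F$ at $t/2$. For $\alpha$ in the admissible range, write
$$(R*F)(t) = \int_0^{t/2} R(t-s) F(s)\, ds + \int_{t/2}^{t} R(t-s) F(s)\, ds.$$
On $[0,t/2]$ one has $t-s \geq t/2$, so $|R(t-s)| \lesssim \langle t\rangle^{-\alpha}$ for any $\alpha \leq n$; the $s$-integral then produces $\|F\|_{L^1}$, which is controlled by $\|\langle\cdot\rangle^\alpha F\|_{L^\infty}$ because $\alpha \geq 2$. On $[t/2,t]$ one has $s \geq t/2$, so $|F(s)| \lesssim \langle t\rangle^{-\alpha} \|\langle\cdot\rangle^\alpha F\|_{L^\infty}$, and the $s$-integral against $|R|$ produces $\|R\|_{L^1}$. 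Combining these two bounds with the trivial estimate on $\langle t\rangle^\alpha F(t)$ yields $\|\langle\cdot\rangle^\alpha y\|_{L^\infty} \lesssim \|\langle\cdot\rangle^\alpha F\|_{L^\infty}$, as claimed.

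The main obstacle is precisely the weighted estimate on $R$. A single split applied to $R = -K + K*R$ at $t/2$ stagnates: on the half where $t-s$ is small, the decay of $K$ is not usable and only the current decay rate of $R$ reappears, giving no improvement. Breaking this circularity is the content of the classical decay-transfer results for Volterra resolvents quoted in \cite{Volterra,Volterra2}, and it is where the Paley--Wiener lower bound $\kappa$ plays a decisive role.
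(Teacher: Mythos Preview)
Your approach is conceptually sound but differs substantially from the paper's, and the key step is not fully justified.

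\textbf{Where the approaches diverge.} You work through the resolvent representation $y=F-R*F$ and aim to transfer the weighted decay $\langle t\rangle^n K\in L^\infty$ to $\langle t\rangle^n R\in L^\infty$; once that is in hand, the midpoint split on $R*F$ is indeed a clean two-line argument. The paper, by contrast, never estimates $R$ beyond $R\in L^1$. It works directly on the Volterra equation $y=K*y+F$ and bootstraps in \emph{half-integer} steps: writing $t^{1/2}y=K*(t^{1/2}y)+\text{commutator}+t^{1/2}F$, the commutator $\int_0^t(t^{1/2}-s^{1/2})K(t-s)y(s)\,\dd s$ gains half a power because $t^{1/2}-s^{1/2}\leq (t-s)^{1/2}$, and part~(i) closes the estimate. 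Iterating through $t^{1/2},\,t,\,t^{3/2},\,t^2$ (with an interpolation/Young argument at the last step) yields~(ii); part~(iii) proceeds similarly with integer steps since $\langle t\rangle^3 K\in L^\infty$ makes $tK\in L^1$. This is entirely self-contained.

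\textbf{The gap in your argument.} Your Fourier sketch for the decay of $R$ does not go through as written: the hypothesis is $\langle t\rangle^n K\in L^\infty$, not $\langle t\rangle^n K\in L^1$. For $n=2$ this only gives $|K(t)|\lesssim\langle t\rangle^{-2}$, so $tK$ is \emph{not} in $L^1$ and $\hat K$ need not be $C^1$, let alone $C^2$; the quotient $\hat R=-\hat K/(1-\hat K)$ then has no obvious smoothness from which to extract $\langle t\rangle^2 R\in L^\infty$. Your alternative bootstrap suggestion on $R=-K+K*R$ is the right instinct, and you correctly diagnose that a naive midpoint split stagnates; but you then defer the resolution to the literature. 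That is acceptable as a citation, yet it leaves the proof non-self-contained precisely at the point of difficulty. Note that $R$ itself satisfies a Volterra equation with source $-K$, so proving $\langle t\rangle^n R\in L^\infty$ is \emph{equivalent} to proving the corollary for the special choice $F=-K$; your route therefore does not actually shortcut the problem. The paper's fractional-step bootstrap is exactly the elementary device that breaks the circularity you identify, and it applies directly to $y$ without the detour through $R$.
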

 
\begin{proof}
To get i) it suffices to use \eqref{solres} and the Young inequality.

Let us prove ii). We first observe that
$$ t^{1 \over 2} y(t) = K * (t^{1\over 2} y) + \int_{0}^t( t^{1 \over 2} - s^{1 \over 2}) K(t-s) y(s)\, \dd s + t^{1 \over 2} F.$$
By using i), we obtain that
\begin{equation}
\label{volterra1/2} \|t^{1 \over 2} y \|_{L^\infty} \lesssim  \|y \|_{L^\infty} \sup_{t} \int_{0}^t( t^{1 \over 2} - s^{1 \over 2})| K(t-s)|\, \dd s 
+ \| \langle t \rangle^{1 \over 2 } F \|_{L^\infty} \lesssim  \| \langle t \rangle^{1 \over 2 } F \|_{L^\infty} .
\end{equation}
Next, we can write that
$$ t y(t) = K * (ty) + \int_{0}^t  ( t^{1 \over 2} - s^{1 \over 2}) K(t-s) \,s^{1 \over 2} y(s)\, \dd s + t^{1 \over 2} \int_{0}^t( t^{1 \over 2} - s^{1 \over 2}) K(t-s) y(s)\, \dd s + tF.$$
Consequently, by using i) and the assumptions on $K,$ we obtain that 
\begin{multline*} \|t  y \|_{L^\infty} \lesssim 
\sup_{t}\left( \int_{0}^t  {(t-s)^{ 1 \over 2} \over  \langle t-s \rangle^2 }  \, \dd s\right) \, \|\langle t \rangle^{1 \over 2 } y \|_{L^\infty} \\
+ 
\sup_{t} \left(t^{1 \over 2 } \int_{0}^t {(t-s)^{ 1 \over 2} \over  \langle t-s \rangle^2 }  {1 \over \langle s \rangle^{1 \over 2}} \dd s\right)\,
\|t^{1 \over 2} y \|_{L^\infty} +  \| \langle t \rangle F \|_{L^\infty}
\end{multline*}
and by \eqref{volterra1/2},
\begin{equation}
\label{volterrat1}   \|t  y \|_{L^\infty} \lesssim   \| \langle t \rangle F \|_{L^\infty}.
\end{equation}
Note that we have used that
$$ t^{1 \over 2 } \int_{0}^t {(t-s)^{ 1 \over 2} \over  \langle t-s \rangle^2 }  {1 \over \langle s \rangle^{1 \over 2}} \dd s
\lesssim   {1 \over \langle t \rangle^{3 \over 2 }}\int_{{t \over 2}}^t {1 \over \langle s \rangle^{1 \over 2}} \, \dd s
+ \int_{0}^{t \over 2} { 1 \over \langle t -s \rangle^{3 \over 2}} \, \dd s  \lesssim 1.$$
We then estimate $ t^{2}y,$ and for that we write
$$ t^2 y= K * t^2 y + F_{2}$$
where by similar manipulations as above, the source term $F_{2}$ may be estimated as follows
\begin{multline*}
|F_{2}|\lesssim  t^{1\over 2} \left(\left( \langle \cdot \rangle^{1 \over 2} |K|\right) *\left( \langle \cdot \rangle |y|\right) \right)
+  \left( \langle \cdot \rangle^{1 \over 2} |K|\right) *\left( \langle \cdot \rangle^{3 \over 2} |y|\right)
\\+  t \left(  \left(\langle \cdot \rangle^{1 \over 2} |K|\right) *\left( \langle \cdot \rangle^{1 \over 2 } |y|\right)\right) 
+ t^{3 \over 2} \left( \langle \cdot \rangle^{1 \over 2} |K|\right) *( |y|)  + t^2 |F|.
\end{multline*}
By using again i) and \eqref{volterra1/2}, and similar arguments as above, we obtain that
$$ \|t^2  y \|_{L^\infty}
\lesssim \| \langle t \rangle^{3 \over 2} y \|_{L^\infty} + \| \langle t \rangle^2 F \|_{L^\infty}.$$
To conclude, we can use first the interpolation inequality 
$$   \| \langle t \rangle^{3 \over 2} y \|_{L^\infty} \lesssim  \| \langle t \rangle y \|_{L^\infty}^{1 \over 2}  \| \langle t \rangle^{2} y \|_{L^\infty}^{1 \over 2}.$$
Then we apply the Young inequality: for any $\delta >0,$
$$\| \langle t \rangle y \|_{L^\infty}^{1 \over 2}  \| \langle t \rangle^{2} y \|_{L^\infty}^{1 \over 2}\leq \frac{\| \langle t \rangle y \|_{L^\infty}}{2\delta}+ \frac{\delta \| \langle t \rangle^{2} y \|_{L^\infty}}{2}.$$
Choosing $\delta$ small enough, we conclude that
$$   \|t^2  y \|_{L^\infty}
\lesssim \| \langle t \rangle y \|_{L^\infty} + \| \langle t \rangle^2 F \|_{L^\infty}$$
and the result follows by using \eqref{volterrat1}.

To prove iii), we can use the same arguments.  We first write
$$ ty = K *(ty) +  F_{1}$$
with
$$ F_{1} (t)= tF + (tK) * y.$$
Since $tK \in L^1$, we get by using \eqref{volterraest} that
$$ \|t y\|_{L^\infty} \lesssim \|F_{1}\|_{L^\infty} \lesssim  \|\langle t \rangle F \|_{L^\infty}.$$
Next, we write 
$$ t^2y = K* t^2 y + F_{2}, \quad F_{2}=  (tK) * ty + t F^1$$  
and by Young's inequality
$$ \|F_{2}\|_{L^\infty} \lesssim \| tK\|_{L^{1}}\| ty\|_{L^{\infty}} + \|t^2 F \|_{L^\infty} +  \|t ( (tK) * y )\|_{L^\infty} \lesssim  \|t F \|_{L^\infty} + \|t^2 F \|_{L^\infty} +  \|t ( (tK) * y )\|_{L^\infty}. $$
It remains to see that
$$ |( tK) *y |\lesssim  \int_{0}^t   { 1 \over \langle t-s\rangle^2} { 1 \over \langle s \rangle} \|\langle t \rangle y \|_{L^\infty} \dd s
\lesssim { 1 \over \langle t \rangle} \| t y \|_{L^\infty},$$
such that
$$\|F_{2}\|_{L^\infty} \lesssim \|\langle t\rangle ^2 F \|_{L^\infty}.$$
We conclude by using again \eqref{volterraest} that
$$ \| t^2 y \|_{L^\infty} \lesssim  \|\langle t \rangle^2 F \|_{L^\infty}.$$
$t^3 y$ is estimated in the same way as above.      
\end{proof}

We shall then apply the Corollary to the two Volterra equations \eqref{Volt} to prove Theorem \ref{theomagnetiz}, starting with the one satisfied by $\Cc(t).$ Note that by using Proposition \ref{propnoyaux}, and the Penrose criterion \eqref{penrose}, we get that the kernel $K_{\mathcal{C}}$ matches the assumptions of Corollary \ref{corvolterra} iii). To estimate $F_{\mathcal{C}}(t)$ given by \eqref{eq:F}, we can apply Theorem \ref{theodecay} (using the orthogonality condition \eqref{orthocond}) with the functions $ \varphi = \cos(\rX(x,v))$ and  $f =  r^{0}(x,v)$, for which we have $p\geq 0$ and $q=1$. Now without further assumptions this implies that $F_{\mathcal{C}}(t) \lesssim \frac{1}{\langle t \rangle^\alpha}$ with $\alpha=\frac{p + 5}{2}$.  
Therefore the application of Corollary \ref{corvolterra} to the first Volterra equation of \eqref{Volt} yields the estimate on $\Cc(t)$ claimed in Theorem \ref{theomagnetiz}.

In the case of the second Volterra equation of \eqref{Volt}, satisfied by $\Sc(t),$ we  estimate  
$F_{\mathcal{S}}(t)$ given in \eqref{eq:F} by using  Theorem \ref{theodecay} with the functions $\varphi = \sin(\rX(x,v))$ and $f = r^{0}(x,v)$ for which we have $p\geq 0$ and $q=0$. This  yields the estimate
$ |F_{\mathcal{S}}(t) | \lesssim {1 \over \langle t \rangle^\alpha}$ with $\alpha = 2 + \frac{p}{2}$. 
As the kernel  $K_{\mathcal{S}}$ falls under the scope of Corollary \ref{corvolterra} ii), the estimate on $\Sc(t)$ claimed in Theorem \ref{theomagnetiz} follows.

\section{Proof of the scattering result Corollary \ref{weakconv}}

Let us first study the asymptotic behavior of $g,$ and define
$g_{\infty}(x,v)$ by
\begin{equation}
\label{ginfty1}
g_{\infty}(x, v)= 
r^{0}(x, v) + \int_{0}^{+ \infty}\Big(  \mathcal{C}(s) \left\{ \eta, \cos (\rX)\right\}\circ \psi_{s}(x, v)
+   \mathcal{S}(s) \left\{ \eta, \sin (\rX)\right\}\circ \psi_{s}(x, v)
\Big) \, \dd s. 
\end{equation}
Note that the above integral is convergent in $L^1_{x,v}$. Indeed, by using that $\psi_{s}$ is measure preserving and Theorem \ref{theomagnetiz} giving decay  estimates for  $\mathcal{C}(s)$ and $\mathcal{S}(s)$ we get that
$$ \left\|  \mathcal{C}(s) \left\{ \eta, \cos (\rX)\right\}\circ \psi_{s}(x, v)
+   \mathcal{S}(s) \left\{ \eta, \sin (\rX)\right\}\circ \psi_{s}(x, v) \right\|_{L^1_{x,v}}
\lesssim { 1 \over \langle s \rangle^2}.$$
As
$$
g(t,x, v)= 
r^{0}(x, v) + \int_{0}^{t}\Big(  \mathcal{C}(s) \left\{ \eta, \cos (\rX)\right\}\circ \psi_{s}(x, v)
+   \mathcal{S}(s) \left\{ \eta, \sin (\rX)\right\}\circ \psi_{s}(x, v)
\Big) \, \dd s,
$$
this also yields that
\begin{equation}
\label{g-ginfty}
\| g(t)- g_{\infty}\|_{L^1_{x,v}} \lesssim \int_{t}^{+ \infty}{ 1 \over \langle s \rangle^2}\, \dd s \lesssim {1 \over \langle t \rangle},
\end{equation}
which proves the first part of the statement. 
Now, let us study the weak convergence of $r(t,x,v)$. Let us observe that for every test function $\phi(x,v)$, we have by volume preservation that
\begin{multline*} \int_{\T\times \R} r(t,x,v) \phi(x,v) \, \dd x \dd v = \int_{\T\times \R} g(t,x,v) \phi(\psi_{t}(x,v)) \dd x \dd v
\\= \int_{\T\times \R} g_{\infty}(x,v) \phi( \psi_{t}(x,v))\dd x \dd v + \mathcal{O}\left( { 1 \over \langle t \rangle}\right) =: I(t) + + \mathcal{O}\left( { 1 \over \langle t \rangle}\right). 
\end{multline*}
By using the expression \eqref{ginfty1}, and the fact that $\psi_s$ is invertible and preserves the volume, we obtain that
\begin{multline}
\label{korg}
 I(t) = \int_{\T \times \R}  r^{0}(x,v) \phi (\psi_{t}(x,v))\, \dd x \dd v
+ \int_{0}^{+ \infty} \left( \mathcal{C}(s) \int_{\T\times \R} \{\eta, \cos (\rX)\} \phi(\psi_{t-s}(x,v))\, \dd x \dd v \right.\\ \left. +\mathcal{S}(s) \int_{\T \times \R}
\{\eta, \sin (\rX)\}  \phi(\psi_{t-s}(x,v))\, \dd x \dd v \right)\, \dd s.
\end{multline} 
Now, thanks to Theorem \ref{theodecay}, we obtain that 
$$ \int_{\T \times \R} r^{0}(x,v) \phi(\psi_t(x,v)) \dd x \dd v  \rightarrow_{t \rightarrow + \infty}\sum_{* \in \{ \pm,\circ\}} \int_{J_*} (r^{0})_0^*(a) \phi_0^*(a)  \dd a = \int_{\T \times \R} r_{\infty}(x,v) \phi(x,v) \dd x \dd v,$$
with $r_{\infty}(x,v)$ the angle average of $r^0(\theta,a)$, 
$$ 
r_{\infty}(x,v) = (r^0)_0^*(h) =   \frac{1}{2\pi}\int_{(-\pi,\pi)} r^{0}(x(\theta, h), v(\theta, h)) \dd \theta,  \quad h \in I_{*}, \, *\in \{\pm, \circ\}.
$$
Next, we observe that $  \{\eta, \cos (\rX)\}^*_{0}= \{\eta, \sin (\rX)\}^*_{0}= 0$. Consequently, by using again
Theorem \ref{theodecay}, we obtain that
$$ \left| \int_{\T \times \R} \{\eta, \cos (\rX)\} \phi(\psi_{t-s}(x,v))\, \dd x \dd v\right|  + \left| \int_{\T \times \R}
\{\eta, \sin (\rX)\}  \phi(\psi_{t-s}(x,v))\, \dd x \dd v\right|
\lesssim { 1 \over \langle t-s \rangle^2}.$$
Consequently, we find that
\begin{multline*}
\left|  \int_{0}^{+ \infty} \left( \mathcal{C}(s) \int_{\T \times \R} \{\eta, \cos (\rX)\} \phi(\psi_{t-s})\, \dd x \dd v  +\mathcal{S}(s) \int_{\T \times \R}
\{\eta, \sin (\rX)\}  \phi(\psi_{t-s})\, \dd x \dd v \right)\, \dd s \right| \\
\lesssim \int_{0}^{+ \infty} { 1 \over \langle s \rangle^2} { 1 \over \langle t-s \rangle^2} \, \dd s  \lesssim {1 \over \langle t \rangle^2}.
\end{multline*}
and using \eqref{korg} this concludes the proof of corollary \ref{weakconv}.

\section{Penrose condition: Proofs of Theorem \ref{theopenrose} and Proposition \ref{penrosephy}}

\subsection{Proof of Theorem \ref{theopenrose}} 
Let us start with the study of $K_{\mathcal{C}}$ (see \eqref{eq:F}). With the assumption on the profile function $G$, 
Proposition \ref{propnoyaux} shows that ${K_{\mathcal{C}}} \in L^1(\mathbbm{R}_{+})\cap L^2(\mathbbm{R}_{+})$. We have 
$$
\hat{K}_{\Cc}(\xi) = \frac{1}{2\pi}\int_{-\infty}^\infty K_{\Cc}(t) e^{ i t \xi} \dd t
=  \frac{1}{2\pi}\int_{\T \times \R } \int_{0}^\infty  e^{ i t \xi} \{ \eta , \cos ( \rX )\}\cos(\rX \circ \psi_t) \dd x \dd v \dd t, 
$$
which defines a continuous function on the set $\{ \xi \in \C \, | \, \mathrm{Im} (\xi) \leq 0\}$ holomorphic on the set $\{ \mathrm{Im} (\xi) < 0\}$. 

(i) 
As $\frac{\dd}{\dd t}\cos(\rX \circ \psi_t) = \{ \cos(\rX), h_0\} \circ \psi_t$, for $\xi \neq 0$, we have after integration by part and using estimate \eqref{estKCKS}
$$
\hat{K}_{\Cc}(\xi) 
=  - \frac{1}{2i \xi \pi} K_{\Cc}(0) 
  - \frac{1}{i \xi}\frac{1}{2\pi} \int_{0}^\infty  e^{ i t \xi} \int_{\T \times \R } \{ \eta , \cos ( \rX )\}\{ \cos X,h_0\} \circ \psi_t \dd x \dd v \dd t. 
$$
To analyze the second term, 
we can use Theorem \ref{theodecay} with the functions $f= \left\{\eta,\cos(\rX)\right\}$ and $\varphi = \{ \cos X,h_0\} = - \sin(x) v$ for which we have $p = q = 1$. As noted in \eqref{brahms3} the average $f_0(a)$ vanishes and hence the integrand is $\mathcal{O}( \frac{1}{\langle t \rangle^3})$ by using Theorem \ref{theodecay}. This shows that for $\{\mbox{Im}(\xi)\leq 0\}$ and $\xi \neq 0$ we have 
$$
|\hat{K}_{\Cc}(\xi) | \lesssim \frac{1}{|\xi|}. 
$$
Hence there exists $B > 0 $ such that for $|\xi| \geq B$, $|1 - \hat{K}_{\Cc}(\xi)| > \frac12 $. 

(ii) Moreover, as $\eta = G(h_0)$, and as $h_0$ is invariant by the flow, we have that 
\begin{eqnarray*}
K_\mathcal{C}(t) &=& \displaystyle\mathds{1}_{\{t\geq 0\}}\int_{\T \times \R } G' (h_0) \cos(\rX ) \{ h_0 , \cos ( \rX )\} \circ \psi_{-t} \dd x \dd v\\
&=& \displaystyle\mathds{1}_{\{t\geq 0\}} \frac{\dd}{\dd t}
\int_{\T \times \R } G'(h_0) \cos(\rX) \cos ( \rX  \circ \psi_{-t}) \dd x \dd v =  \mathds{1}_{\{t\geq 0\}} \frac{\dd}{\dd t} Q_\mathcal{C}(t),
\end{eqnarray*}
with 
$$
Q_\mathcal{C}(t) = \int_{\T \times \R } G'(h_0) \cos(\rX \circ \psi_t) \cos ( \rX )  \dd x \dd v - Q_0. 
$$
where 
$$
Q_0 = \sum_{* \in \{ \pm,\circ\}} \int_{J_*} G'(h_0(a)) |C_0^*(a)|^2 \dd a, 
$$
where $C_0^*(a) \in \R$ is given in \eqref{CS} (see \eqref{coeffouriercos1} and \eqref{coeffouriercos2} for explicit expressions). By applying Theorem \eqref{theodecay} with the functions $f = G(h_0) \cos X$ and $\varphi = \cos X$, we obtain with this definition of $Q_0$ that 
$$
|Q_\mathcal{C}(t) | \lesssim \frac{1}{\langle t \rangle^3}. 
$$
Hence we can write
$$\hat K_\Cc(\xi) = \int_{0}^{+\infty} e^{- i t \xi} \frac{\dd}{\dd t} Q_\mathcal{C}(t) \dd t=- Q_\mathcal{C}(0) + i  \xi \int_{0}^{+\infty} e^{- i t \xi} Q_\mathcal{C}(t) \dd t,$$
where by the previous estimate the time integral is well defined and uniformly bounded in $\{\mbox{Im}(\xi)\leq 0\}$.
The assumption \eqref{penrose0} can actually be restated as 
$$1 - \hat K_\Cc(0)  = 1 + Q_\mathcal{C}(0)  = \kappa_0 > 0. $$
Hence, by continuity,  there exists $A>0$ such that for $|\xi| \leq A$, we will have $|1 - \hat K_\Cc(\xi) | > \frac{\kappa_0}{2}$.  

(iii) Now let us express $Q_{\Cc}(t)$ in action-angle variables. We have 
$$
Q_\mathcal{C}(t) = - Q_0 + \frac{1}{2\pi} \sum_{*\in \{\circ,\pm\}} \int_{J_{*}\times (-\pi,\pi)} G'(h_0(a)) \cos(\rX\circ \psi_{t}(\theta,a)) \cos (x(\theta,a)) \dd \theta \dd a
$$
By using for $(\theta,a)\in J_{*}\times (-\pi,\pi)$ the identity $\psi_{t}(\theta,a)=\theta+t\omega_{*}(a)$ and the Fourier expansion \eqref{CS} for the cosine function, and by definition of $Q_0$ we infer that 
\begin{eqnarray*}
Q_\mathcal{C}(t) &=& - Q_0 + \frac{1}{2\pi} \sum_{*\in \{\circ,\pm\}} \sum_{\ell,\ell' \in \Z}\int_{J_{*}\times (-\pi,\pi)} G'(a)C_\ell^*(a) C_{\ell'}^*(a) e^{i t \ell \omega_*(a)} e^{i (\ell + \ell') \theta} \dd \theta \dd a
\\
&=& \sum_{*\in \{\pm, \circ\}} \sum_{\ell \neq 0}  
\int_{J_{*}} G'(a)|C_{\ell}^{*}(a)|^2  e^{i t \ell \omega_{*}(a)} \dd a, 
\end{eqnarray*}
Now we calculate that for $\mathrm{Im}(\xi) < 0$, 
\begin{eqnarray*}
\hat K_\Cc(\xi) &=& - Q_\mathcal{C}(0) + \sum_{*\in \{\pm, \circ\}} \sum_{\ell \neq 0} \int_{J_{*}}  G'(h_0(a)) |C_\ell^{*}(a)|^2  \frac{\xi}{\xi - \ell \omega_{*}(a)}  \dd a\\
&=&  \sum_{*\in \{\pm, \circ\}} \sum_{\ell\neq 0} \int_{J_{*}}  G'(h_0(a)) |C_\ell^{*}(a)|^2  \left(\frac{\xi}{\xi - \ell \omega_{*}(a)}  -1\right) \dd a\\
&=&  \sum_{*\in \{\pm, \circ\}} \sum_{\ell \neq 0} \int_{J_{*}}  G'(h_0(a)) |C_\ell^{*}(a)|^2  \left(\frac{\ell \omega_{*}(a)}{\xi - \ell \omega_{*}(a)}\right) \dd a.
\end{eqnarray*}
Hence we have for $\xi = \gamma+ i \tau$ with $\tau <0$, 
$$
\mathrm{Re} \, \hat K_\Cc(\xi) = \sum_{*\in \{\pm, \circ\}} \sum_{\ell \neq 0} \int_{J_{*}}  G'(h_0(a)) |C_\ell^{*}(a)|^2  \left(\frac{(\gamma- \ell \omega_*(a))\ell \omega_{*}(a)}{|\gamma - \ell \omega_{*}(a)|^2+\tau^2}\right) \dd a
$$
For a given $-\tau \in [A,B]$, as $G'< 0$ we have that 
\begin{multline*}
\lim_{\gamma\to 0} \mathrm{Re} \, \hat K_\Cc(\xi) = - \sum_{*\in \{\pm, \circ\}} \sum_{\ell \neq 0} \int_{J_{*}}  G'(h_0(a)) |C_\ell^{*}(a)|^2  \left(\frac{\ell^2 \omega_{*}(a)^2}{\ell^2 \omega_{*}(a)^2+\tau^2}\right) \dd a \\ \leq - \sum_{*\in \{\pm, \circ\}} \sum_{\ell \neq 0} \int_{J_{*}}  G'(h_0(a)) |C_\ell^{*}(a)|^2  = - Q_{\Cc}(0)
\end{multline*}
By uniform continuity, this implies that there exists $\varepsilon_0$ such that for $|\gamma|< \varepsilon$ and $-\tau \in [A,B]$, we have 
$| 1 - \hat K_\Cc(\xi) | >\frac{\kappa_0}{2}$. 

(iv) With the same notation as before, we calculate that 
\begin{multline}
\label{IMKC}
\mathrm{Im} \, \hat K_\Cc(\xi) =- \sum_{*\in \{\pm, \circ\}} \sum_{\ell \neq 0} \int_{J_{*}} G'(h_0(a)) |C_\ell^{*}(a)|^2 \tau \ell \omega_{*}(a) \frac{1}{(\gamma - \ell \omega_{*}(a))^2 + \tau^2}\\
=- \sum_{*\in \{\pm, \circ\}} \sum_{\ell > 0} \int_{J_{*}}G'(h_0(a)) |C_\ell^{*}(a)|^2 \tau \ell \omega_{*}(a) \left( \frac{1}{(\gamma - \ell \omega_{*}(a))^2 + \tau^2} - \frac{1}{(\gamma + \ell \omega_{*}(a))^2 + \tau^2}\right) \dd a\\
=- 4 \gamma \tau \sum_{\underset{\ell > 0}{*\in \{\pm, \circ\}}} \int_{J_{*}}G'(h_0(a)) |C_\ell^{*}(a)|^2  \ell^2 \omega_{*}(a)^2 \left(     \frac{1}{((\gamma - \ell \omega_{*}(a))^2 + \tau^2)((\gamma + \ell \omega_{*}(a))^2 + \tau^2)}\right).
\end{multline}
The coefficients $C_\ell^*(a)$ explicitly given in section \ref{actionangle} are non-zero everywhere (see Propositions \ref{prop74} and \ref{prop710}), and hence the previous term does not vanish when $\gamma\neq 0$ and $\tau \neq 0$. By combining with the previous results, this shows that for all $\varepsilon >0$, there exists $\kappa(\varepsilon)>0$ such that $| 1 - \hat K_{\Cc}(\xi) | \geq \kappa(\varepsilon)$ except possibily if $-\tau \leq \varepsilon$ and $|\gamma| \in [\frac{A}{2},2B]$.

(iv) To conclude, we thus need to study the limit $\tau \to 0$ for $|\gamma| \in [\frac{A}{2},2B]$. By symmetry, we can only consider the case $\gamma>0$ and we know that in this case $\mathrm{Im} \, \hat K_\Cc(\xi) < 0$ is a sum of negative terms. 
In the first sum \eqref{IMKC} giving the expression $\mathrm{Im} \, \hat K_\Cc(\xi)$, we have that for $\ell<0$, $\gamma - \ell\omega_*(a) >\gamma$. Hence for a fixed $\gamma \in  [\frac{A}{2},2B]$ the limit of the corresponding terms when $\tau \to 0$ is $0$ and the only contribution comes from terms for which $\ell>0$. 

We shall use the fact that $\partial_{a} \omega_{*}$ does not vanish on each chart $U_*$ (see Remarks \ref{rem73} and \ref{rem79} in Section \ref{actionangle}). 

Let us consider the upper exterior of the eye, {\em i.e.} $*=+$. We make in the integral on $J_+$ the 
change of variable $u=\ell \omega_+ (a)- \gamma$. Hence when $a\in J_+= (\frac{4}{\pi}\sqrt{M_0},+\infty)$, we have 
$u \in (- \gamma, +\infty)$ by using the formula of Proposition \ref{prop72}. 
Hence we have for $\ell > 0$, 
\begin{multline*}
\int_{J_{+}} G'(h_0(a)) |C_\ell^{+}(a)|^2 \tau \ell \omega_{+}(a) \frac{1}{(\gamma - \ell \omega_{+}(a))^2 + \tau^2}
= \int_{-\gamma}^{+\infty} F_\gamma(\frac{u+ \gamma}{\ell}) \frac{\tau }{u^2 + \tau^2}\dd u\\
= - \int_{-\frac{\gamma}{\tau}}^{+\infty} F_\gamma^+(\frac{|\tau| u+ \gamma}{\ell}) \frac{1}{u^2 + 1}\dd u = - \pi F^+(\frac{\gamma}{\ell}) + R_{\ell,\gamma}( |\tau| ), 
\end{multline*}
where 
\begin{equation}
\label{Fmoins}
F^+(v) := \frac{v}{|\ell \partial_a \omega_+( \omega_+^{-1}(v))|}G'(h_0(\omega_+^{-1}(v))) |C_\ell^+(\omega_+^{-1}(v))|^2.
\end{equation}
Note that in view of \eqref{coeffouriercos1}, for all $\ell$, the coefficient $C_\ell^+$ are non zero, then we have for some $\ell_0$ that for all $\gamma \in [\frac{A}{2},2B]$, $F^+(\frac{\gamma}{\ell_0}) < - \kappa_1$. 
As all the terms in \eqref{IMKC} are non positive, we have  
$$
 \mathrm{Im} \, \hat K_\Cc(\xi)   < \pi  F^*(\frac{\gamma}{\ell_0}) + R_{\gamma,\ell_0}(|\tau|) < - \frac{\kappa_1}{2}
$$
for $-\tau \leq \varepsilon$ small enough. By combination with the previous item, this concludes the proof for $K_\Cc(\xi)$.

Now we consider the case of $K_{\Sc}$. The proof for $\Kc_{\Cc}(\xi)$ is entirely similar, once we have noticed that 
$$
K_{\Sc}(t) = K_\mathcal{S}(t)= \mathds{1}_{\{t\geq 0\}} \frac{\dd}{\dd t} Q_\mathcal{S}(t)
$$
with an expansion in action-angle variables
$$
Q_\mathcal{S}(t) = -2\sum_{*\in \{\pm, \circ\}} \sum_{\ell >0} \int_{J_{*}} G'(h_0(a)) |S_\ell^{*}(a)|^2 \cos( t \ell \omega_{*}(a)) \dd a.
$$
The argument is then identical as for the case of $K_{\Cc},$ since the coefficients $S_{\ell}^{*}(a)$ are non-zero everywhere (see Propositions \ref{prop74} and \ref{prop710}).

\subsection{Proof of Proposition \ref{penrosephy}}
In the case of $K_{\Cc},$ we saw in the previous proof that 
$$1-\hat{K}_{\Cc}(0)> 0 \Leftrightarrow 1+Q_{\Cc}(0) > 0,$$
with 
$$Q_\mathcal{C}(t) = 2\sum_{*\in \{\pm, \circ\}} \sum_{\ell >0} \int_{J_{*}} G'(h_0(a)) |C_\ell^{*}(a)|^2 \cos( t \ell \omega_{*}(a)) \dd a.$$
Now we can use Parseval's identity
$$\sum_{\ell\in\mathbbm{Z}} |C_{\ell}^{*}(a)|^{2}=\frac{1}{2\pi}\int_{(-\pi,\pi)} \cos^{2}(x(\theta,a)) \dd \theta$$
to write that
\begin{equation}
\notag
\begin{split}
Q_{\Cc}(0)&=\sum_{*\in \{\pm, \circ\}} \sum_{\ell \neq0} \int_{J_{*}} G'(h_0(a)) |C_\ell^{*}(a)|^2  \dd a\\
&=\frac{1}{2\pi} \sum_{*\in \{\pm, \circ\}}\int_{J^{*}\times (-\pi,\pi)} G'(h_{0}(a)) \cos^{2}(x(\theta,a)) \dd \theta \dd a - \sum_{*\in \{\pm,\circ\}} \int_{J_{*}}G'(h_{0}(a)) |C_0^{*}(a)|^2 \dd a \\
&= \int_{\T \times \R} G'(h_{0}(x,v)) \cos^{2}(x) \dd x \dd v - \sum_{*\in \{\pm,\circ\}} \int_{J_{*}}G'(h_{0}(a)) |C_0^{*}(a)|^2 \dd a, 
\end{split}
\end{equation}
where we have also used area preservation. Hence the condition $1+Q_{\Cc}(0)> 0$ is equivalent to the condition \eqref{eqphy} of Definition \ref{phy}. This proves the result in the case of $K_{\Cc}.$\\
Now in the case of $K_{\Sc},$ we saw in the previous proof that
$$1-\hat{K}_{\Sc}(0)> 0 \Leftrightarrow 1+Q_{\Sc}(0) > 0,$$
with 
$$Q_\mathcal{S}(t) = -2\sum_{*\in \{\pm, \circ\}} \sum_{\ell >0} \int_{J_{*}} G'(h_0(a)) |S_\ell^{*}(a)|^2 \cos( t \ell \omega_{*}(a)) \dd a.$$
Using Parseval's formula as previously, and as $S_{0}^{*}(a)=0$ on each chart, we obtain this time that
$$1+Q_{\Sc}(0) > 0\Leftrightarrow 1-\int_{\T\times \R} G'(h_0(x,v))\sin^{2}(x) \dd x\dd v>0,$$
which is guaranteed by the assumption $G'<0.$

\section{Examples of stable stationary states}

\label{sectionromain}

In this section we study the existence of stationary states of the kind \eqref{cimaros}, and exhibit examples of such states that satisfy the stability hypothesis \eqref{phy}. Let us first make the following comments on the stationary states considered in the introduction. Stationary solutions of \eqref{V-HMF}, are functions $\eta(x,v)$ satisfying $\{\eta,H[\eta]\} = 0$. 
As for all smooth functions $G:\R \to \R$ and $H: \R^2 \to \R$ we have $\{G(H),H\} = G'(H) \{H,H\} = 0$, 
stationary states can be constructed by finding a function $G:\mathbbm{R} \rightarrow \mathbbm{R}$ and a function $\eta$ smooth enough such that the $\eta(x,v)=G\left(H[\eta](x,v)\right)$. Note that for such function, we have 
$$H[\eta] = \frac{v^2}{2} - \Cc[\eta] \cos(x) - \Sc[\eta] \sin(x) = M_0 \cos(x - x_0), $$
where $M_0 \geq 0$ and $x_0$ are real constants attached to the stationary state such that  $M_0 e^{ix_0} = \mathcal{C}[\eta] + i \Sc[\eta]$. 
As \eqref{V-HMF} is invariant by translation $x \mapsto x + x_0$ we can consider the case $x_0 = 0$ (i.e. $M_0 = \mathcal{C}[\eta]$ and $\Sc[\eta] = 0$), and any solutions must satisfy \eqref{eqM0} for this number $M_0$. 

Conversely all functions $\eta(x,v) = G( \frac{v^2}{2} - M_0 \cos(x))$) with 
 $M_0$, called the magnetization of $\eta$, satisfying \eqref{eqM0}  defines a stationary states. 
%
%
%
%

\subsection{Sufficient conditions of existence and stability}

The following Proposition provides a sufficient condition on the function $G$ such that an inhomogeneous state of the kind \eqref{cimaros} exists.

\begin{proposition}
\label{general}
Let $G:[-e,+\infty[\to \mathbbm{R}_{+}$ be a $C^{1}$ function such that $G,G'\in L^{1}([-e,+\infty[).$ Assume that there exists $\zeta>0$ such that
\begin{equation}
\label{first}
\int_{\T\times \R} G\left(\frac{v^{2}}{2}-\zeta \cos(x)\right) \cos(x) \dd x \dd v \geq \zeta,
\end{equation}
and that
\begin{equation}
\label{second}
1+\int_{\T\times \R} G'\left(\frac{v^{2}}{2} \right) \cos^{2}(x) \dd x\dd v> 0.
\end{equation}
Then there exists a solution $M_{0}>0$ to the equation
$$M_{0}=\int_{\T \times \R} G\left(\frac{v^{2}}{2}-M_{0} \cos(x)\right) \cos(x) \dd x \dd v.$$
In particular, $\eta(x,v)=G\left(\frac{v^{2}}{2}-M_{0} \cos(x)\right)$ is an inhomogeneous stationary solution of \eqref{V-HMF}.
\end{proposition}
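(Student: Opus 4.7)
My plan is to introduce
$$\Phi(M) := \int_{\T \times \R} G\left(\frac{v^2}{2} - M\cos x\right) \cos x \, \dd x \, \dd v$$
and look for a positive fixed point of $\Phi$ via the intermediate value theorem applied to $F(M) := \Phi(M) - M$ on the interval $[0,\zeta]$. First, one checks that $\Phi(0) = G(v^2/2) \int_{\T} \cos x \, \dd x = 0$, hence $F(0) = 0$. Assumption \eqref{first} reads exactly $F(\zeta) \geq 0$. So everything reduces to showing that $F$ becomes strictly negative somewhere on $(0,\zeta)$.

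To achieve this, the plan is to show $\Phi$ is $C^1$ in $M$ near $0$ with
$$\Phi'(0) = -\int_{\T \times \R} G'\left(\frac{v^2}{2}\right) \cos^2 x \, \dd x \, \dd v,$$
so that assumption \eqref{second} translates into $F'(0) = \Phi'(0) - 1 < 0$. Together with $F(0)=0$, this forces $F(M) < 0$ for all sufficiently small $M > 0$. Combined with $F(\zeta) \geq 0$ and the continuity of $F$, the intermediate value theorem yields $M_0 \in (0,\zeta]$ with $\Phi(M_0) = M_0$. By the discussion at the beginning of this section, the function $\eta(x,v) = G(v^2/2 - M_0 \cos x)$ is then a stationary solution of \eqref{V-HMF}.

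The main technical work is to justify that $\Phi$ is well-defined, continuous, and that one can differentiate under the integral sign at $M=0$. For fixed $x$, the change of variable $u = v^2/2 - M\cos x$ on each half-line $\{v > 0\}$ and $\{v < 0\}$ gives
$$\int_{\R} G\left(\tfrac{v^2}{2} - M\cos x\right) \dd v = \sqrt{2} \int_{-M\cos x}^{+\infty} \frac{G(u)}{\sqrt{u + M\cos x}}\, \dd u.$$
The integrable singularity at $u = -M\cos x$ combined with the tail integrability provided by $G \in L^{1}$ furnishes, for $M$ in a small neighborhood of $0$, a bound that is integrable in $x$, so $\Phi(M)$ is well-defined and continuous in $M$ by dominated convergence. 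Performing the same manipulation with $G'$ in place of $G$ (which is allowed since $G' \in L^1$) provides the uniform integrable dominating function needed to differentiate under the integral sign, yielding the formula for $\Phi'(0)$ announced above. This integrability verification is the only genuinely technical step; once it is in place, the proposition follows from the intermediate value theorem argument.
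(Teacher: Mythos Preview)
Your proof is correct and follows essentially the same approach as the paper: define $F(M)=\Phi(M)-M$, use $F(0)=0$, $F'(0)<0$ (from \eqref{second}), and $F(\zeta)\geq 0$ (from \eqref{first}) together with the intermediate value theorem. The paper's proof is terser and does not spell out the dominated-convergence justification for differentiating under the integral; your added technical discussion is a welcome elaboration rather than a different method.
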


\begin{proof}
Consider the function
$$F(z)= \int_{\T \times \R} G\left(\frac{v^{2}}{2}-z \cos(x)\right) \cos(x) \dd x \dd v-z.$$
We have $F(0)=0$ (as the cosine function has average $0$ on $(-\pi,\pi)$), and the hypothesis imply that $F(\zeta)\geq 0$ and $F'(0)<0.$ Hence either $F(\zeta)=0$ and the proof is done, or $F(\zeta)>0,$ and the intermediary value Theorem shows that there exists $M_{0}\in (0,\zeta)$ such that
$ F(M_{0})=0.$
\end{proof}

The next Proposition gives a sufficient condition to fulfill the stability assumption of definition \ref{phy}, which is moreover independent of the angle-action variables. 

\begin{proposition}
\label{general3}
Let $G:[-e,+\infty[\to \mathbbm{R}_{+}$ be a $C^{1}$ function such that $G,G'\in L^{1}([-e,+\infty[),$ and $\eta$ be defined by \eqref{cimaros} with $M_{0}>0.$ Assume that $G'<0,$ and that $\eta = G(h_0)$ satisfies 
$$1+\int_{\T\times \R} G'(h_0(x,v))\cos^{2}(x)\dd x\dd v - \frac{\left(\displaystyle\int_{\T\times \R} \cos(x)G'(h_0(x,v))\dd x\dd v\right)^{2}}{\displaystyle\int_{\T\times \R} G'(h_0(x,v))\dd x\dd v}>0.$$
Then $\eta$ is stable in the sense of definition \ref{phy}.
\end{proposition}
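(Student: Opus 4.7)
The plan is to compare the two inequalities term by term. The hypothesis of the proposition and the stability condition of Definition \ref{phy} differ only in the last term: Definition \ref{phy} contains
$$\sum_{*\in\{\pm,\circ\}}\int_{J_*} G'(h_0(a))\, C_0^*(a)^2\, \dd a,$$
while the sufficient condition contains
$$\frac{\left(\int_{\T\times\R}\cos(x)\,G'(h_0)\,\dd x\,\dd v\right)^2}{\int_{\T\times\R} G'(h_0)\,\dd x\,\dd v}.$$
Both appear with a minus sign, so it is enough to show that the first quantity is less than or equal to the second, which I propose to do via Cauchy--Schwarz in action-angle variables.

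The first step is to rewrite the numerator and denominator of the hypothesis using the action-angle change of variables from Theorem \ref{theoaction-angle}. Since $G'(h_0(x,v))$ depends only on $a$, its Fourier coefficients with respect to $\theta$ reduce to $(G'(h_0))^*_\ell(a) = G'(h_0(a))\delta_{\ell,0}$. Applying the Parseval-type identity \eqref{oslo} with the Fourier expansions \eqref{CS} of $\cos(x)$ and the trivial expansion of $\varphi\equiv 1$, I obtain
\begin{equation*}
\int_{\T\times\R}\cos(x)\,G'(h_0)\,\dd x\,\dd v = \sum_{*\in\{\pm,\circ\}}\int_{J_*} G'(h_0(a))\, C_0^*(a)\,\dd a,
\end{equation*}
and similarly
$$\int_{\T\times\R} G'(h_0)\,\dd x\,\dd v = \sum_{*\in\{\pm,\circ\}}\int_{J_*} G'(h_0(a))\,\dd a.$$

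The second step is the key one. Because $G'<0$ and $G'\in L^1$, the formula $\dd\mu(a) = -G'(h_0(a))\,\dd a$ defines a finite positive measure on the disjoint union $\bigsqcup_{*}J_*$. The Cauchy--Schwarz inequality applied to the functions $C_0(a)$ (which is bounded since $C_0^*$ is an average of $\cos$) and $1$ against $\dd\mu$ yields
$$\left(\int C_0\,\dd\mu\right)^2 \;\le\; \int C_0^2\,\dd\mu\;\cdot\;\int \dd\mu.$$
Translating back through the identities of the first step and dividing by $\int_{\T\times\R} G'(h_0)\,\dd x\,\dd v<0$ (which flips the inequality), I arrive at
$$\sum_{*\in\{\pm,\circ\}}\int_{J_*} G'(h_0(a))\, C_0^*(a)^2\,\dd a \;\le\; \frac{\left(\int_{\T\times\R}\cos(x)\,G'(h_0)\,\dd x\,\dd v\right)^2}{\int_{\T\times\R} G'(h_0)\,\dd x\,\dd v}.$$
Subtracting this inequality from the hypothesis of Proposition \ref{general3} immediately gives the condition \eqref{eqphy} of Definition \ref{phy}, concluding the proof.

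There is essentially no genuine obstacle here: once one recognises that the action-angle identity \eqref{oslo} expresses the two linear integrals in the proposition as $\int C_0\,\dd\mu$ and $\int \dd\mu$ with the common positive weight $\dd\mu = -G'\,\dd a$, the ratio $(\int C_0\,\dd\mu)^2/\int\dd\mu$ in the hypothesis is the quantity provided by Cauchy--Schwarz, and the angle-action term $\int C_0^2\,\dd\mu$ in Definition \ref{phy} is precisely the one bounded by it. The only point requiring minor care is the sign bookkeeping caused by $G'<0$, which is handled by working throughout with the positive measure $\dd\mu$.
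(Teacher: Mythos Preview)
Your proof is correct and follows essentially the same approach as the paper: both rewrite the two linear integrals in action-angle variables via \eqref{oslo}, then apply Cauchy--Schwarz with the positive weight $-G'(h_0(a))\,\dd a$ and handle the sign flip coming from $G'<0$. The only cosmetic difference is that the paper normalizes this weight to a probability density before applying Cauchy--Schwarz, whereas you work directly with the unnormalized measure $\dd\mu$.
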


\begin{proof}
By using the symplectic variable $(\theta,a)$, we have 
$$\frac{\left(\displaystyle\int_{\T\times \R} \cos(x)G'(h_0(x,v))\dd x\dd v\right)^{2}}{\displaystyle\int_{\T\times \R} G'(h_0(x,v))\dd x\dd v}=\frac{\left(\displaystyle\sum_{*\in \{\circ,\pm\}}\displaystyle\int_{J_{*}} C_{0}^{*}(a)G'(h_0(a))\dd a\right)^{2}}{\displaystyle\sum_{*\in \{\circ,\pm\}}\displaystyle\int_{J_{*}} G'(h_0(a))\dd a}.$$
Hence it is enough to check that
\begin{equation}
\label{check}
\sum_{*\in\{\circ,\pm\}}\int_{J_{*}} C_{0}^{*}(a)^{2} G'(h_0(a))\dd a \leq \left(\sum_{*\in\{\circ,\pm\}}\int_{J_{*}} C_{0}^{*}(a)G'(h_0(a))\dd a\right)^{2}\left(\sum_{*\in\{\circ,\pm\}}\int_{J_{*}} G'(h_0(a)) \dd a\right)^{-1}.
\end{equation}
Now for $*\in\{\circ,\pm\},$ we define on $J_{*}$ a function
$$F(a)=\left(\sum_{*\in\{\circ,\pm\}}\int_{J_{*}} G'(h_0(a))\dd \alpha\right)^{-1} G'(h_0(a)) >0,$$
which is positive, since $G'<0$ and of global integral $1$. The Cauchy-Schwarz inequality implies then that for all $*\in\{\circ,\pm\}$
$$ \left(\sum_{*\in\{\circ,\pm\}}\int_{J_{*}} C_{0}^{*}(a)F(a)\dd a\right)^{2} \leq \sum_{*\in\{\circ,\pm\}}\int_{J_{*}} C_{0}^{*}(a)^{2} F(a)\dd a. 
$$
Multiplying both sides of the inequality by the real number $\sum_{*\in\{\circ,\pm\}}\int_{J_{*}} G'(h_0(a)) \dd a$ which is negative
we obtain that \eqref{check} is true, and the proof is done.
\end{proof}

\subsection{Example of stable stationary states: Maxwell-Boltzmann distributions}

Here we study the case where the function $G$ is an exponential. As we consider averages of $G$ against cosine functions, we introduce the modified Bessel functions of the first kind: 
$$I_{n}(z)=\frac{1}{\pi} \int_{0}^{\pi} e^{z\cos(x)}\cos(nx) \dd x=\int_{\mathbbm{T}} e^{z\cos(x)}\cos(nx) \dd x.$$
We shall use the following assymptotics (see formulae 9.6.10 and 9.7.1 of \cite{abramo}):
\begin{equation}
\label{besselDL}
I_{n}(z)=\left(\frac{z}{2}\right)^{n}\left[\frac{1}{n!}+\frac{z^{2}}{4(n+1)!}+\mathcal{O}(z^{4})\right] \quad \mbox{when} \quad z\to 0,
\end{equation}
\begin{equation}
\label{besselDL2}
I_{n}(z)=\left(\frac{e^{z}}{\sqrt{2\pi z}}\right)\left[1-\frac{4n^{2}-1}{8z}+\mathcal{O}\left(\frac{1}{z^{2}}\right)\right] \quad \mbox{when} \quad z\to +\infty.
\end{equation}
We shall also use the following result

\begin{proposition}[{\cite{Bessel}}]
\label{bessinIn}
For all $n\in\mathbbm{N}$ and $z\in \R,$ we have
$$z\frac{I_{n}'(z)}{I_{n}(z)} <\sqrt{z^{2}+n^{2}} \quad \mbox{and} \quad \frac{I_{n+1}(z)}{I_{n}(z)} > \frac{\sqrt{(n+1)^2 +z^{2}} - (n+1)}{z}.$$
\end{proposition}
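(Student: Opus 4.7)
The plan is to reduce both inequalities to an ODE analysis of the logarithmic derivative
$u_n(z) := z I_n'(z)/I_n(z)$, which satisfies a Riccati-type scalar equation, and then to obtain the second inequality from the first by one of the classical three-term recurrences for modified Bessel functions. It suffices to work on $z>0$: the case $z<0$ follows at once from the parity $I_n(-z) = (-1)^n I_n(z)$, which preserves each of the two inequalities. Recall that $I_n(z)>0$ and $I_n'(z)>0$ for $z>0$, so $u_n(z)>0$ and the square-root comparison is meaningful.

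First I would derive the Riccati equation for $u_n$. Writing the modified Bessel equation $z^2 I_n'' + zI_n' - (z^2+n^2)I_n = 0$ as $(zI_n')' = (z+n^2/z)I_n$ and differentiating the quotient $u_n = zI_n'/I_n$ gives the clean identity
\[
z\, u_n'(z) \;=\; z^2 + n^2 - u_n(z)^2.
\]
The power series $I_n(z) = (z/2)^n/n!\,[1 + z^2/(4(n+1)) + O(z^4)]$ shows that for $n\geq 1$, $u_n(z) = n + z^2/(2(n+1)) + O(z^4)$, while $\sqrt{z^2+n^2} = n + z^2/(2n)+O(z^4)$; the case $n=0$ is simpler since $u_0(z)\sim z^2/2$ while $\sqrt{z^2}=z$. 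In both cases $f(z) := \sqrt{z^2+n^2} - u_n(z)$ is strictly positive for $z>0$ small. Suppose by contradiction that $f$ vanishes at some smallest $z_0>0$. Then $u_n(z_0)^2 = z_0^2 + n^2$ and the Riccati equation forces $u_n'(z_0)=0$, so
\[
f'(z_0) = \frac{z_0}{\sqrt{z_0^2+n^2}} - u_n'(z_0) = \frac{z_0}{\sqrt{z_0^2+n^2}} > 0,
\]
which is incompatible with $f>0$ on $(0,z_0)$ and $f(z_0)=0$. This barrier argument proves the first inequality on $(0,+\infty)$.

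For the second inequality I would use the recurrence $I_{n+1}'(z) = I_n(z) - \frac{n+1}{z}I_{n+1}(z)$, which yields
\[
\frac{I_{n+1}(z)}{I_n(z)} \;=\; \frac{z}{u_{n+1}(z) + (n+1)}.
\]
Applying the first inequality at index $n+1$ gives $u_{n+1}(z) + (n+1) < \sqrt{z^2+(n+1)^2} + (n+1)$, and rationalizing the resulting lower bound converts it into
\[
\frac{I_{n+1}(z)}{I_n(z)} \;>\; \frac{z}{\sqrt{z^2+(n+1)^2}+(n+1)} \;=\; \frac{\sqrt{(n+1)^2+z^2}-(n+1)}{z},
\]
as required. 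The main obstacle is the behavior near $z=0$: both sides of the first inequality coincide in the limit, so the inequality is sharp and one must justify with some care that the initial gap $f(z)>0$ really opens (the expansion above does this, but in a case-dependent way for $n=0$ versus $n\geq 1$). Once this is in place, the barrier argument for the Riccati ODE is mechanical, and the rest is pure algebraic manipulation of the Bessel recurrence.
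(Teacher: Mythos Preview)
The paper does not provide its own proof of this proposition: it is quoted directly from Laforgia--Natalini \cite{Bessel} and used as a black box in the proof of Proposition~\ref{stab-gauss}. Your argument is essentially the one in that reference (and is the standard one): derive the Riccati identity $z u_n' = z^2 + n^2 - u_n^2$ for $u_n = zI_n'/I_n$, use the series expansion to open a gap at the origin, and then run a first-zero/barrier contradiction; the ratio bound is then read off from the recurrence $I_{n+1}' = I_n - \frac{n+1}{z}I_{n+1}$ exactly as you wrote. For $z>0$ the argument is correct and complete.

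One small caveat: your parity claim does not quite do what you say for the second inequality. From $I_n(-z)=(-1)^n I_n(z)$ one gets $I_{n+1}(-z)/I_n(-z) = -\,I_{n+1}(z)/I_n(z)$, while the right-hand side also changes sign; but flipping signs on both sides of a strict inequality reverses it, so the inequality as stated fails for $z<0$ (and is undefined at $z=0$). The first inequality, by contrast, is genuinely even in $z$ and your parity remark is correct there. This is a defect of the statement as written (``for all $z\in\R$'') rather than of your proof: in the paper the proposition is only invoked at $z=\beta M_0>0$, so nothing is affected. If you want to be precise, simply restrict both inequalities to $z>0$.
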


We shall prove the following result, which shows there exists inhomogeneous Maxwell-Boltzmann distributions
$$\eta(x,v)=\alpha e^{-\beta h_{0}(x,v)}$$
that are stationary solutions of \eqref{V-HMF} of the kind \eqref{cimaros} (see also \cite{Chavanis3}).

\begin{proposition}
\label{gauss}
Let $\alpha, \beta\in \R_{+}^{*},$ and $G(s)=\alpha e^{-\beta s}.$ Then if $\alpha \sqrt{\beta} <\frac{2}{\sqrt{2\pi}},$ $G$ satisfies the conditions \eqref{first} and \eqref{second}. 
\end{proposition}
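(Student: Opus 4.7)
The plan is to reduce both \eqref{first} and \eqref{second} to explicit one-variable inequalities by performing the $v$- and $x$-integrations separately. With $G(s) = \alpha e^{-\beta s}$, the Gaussian integral gives $\int_{\R} e^{-\beta v^{2}/2}\,\dd v = \sqrt{2\pi/\beta}$, and the angular integrals turn into modified Bessel functions via the identity $\int_{\T} e^{z \cos x} \cos(nx)\,\dd x = I_{n}(z)$ that was recalled right before the statement (taking into account the normalized Lebesgue measure on $\T$).

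I would first dispatch \eqref{second}, which is the condition that genuinely uses the hypothesis. Since $G'(v^{2}/2) = -\alpha\beta\, e^{-\beta v^{2}/2}$ and $\int_{\T} \cos^{2}(x)\,\dd x = 1/2$, the integral there equals
$$\int_{\T\times\R} G'\!\left(\tfrac{v^{2}}{2}\right)\cos^{2}(x)\,\dd x\,\dd v = -\frac{\alpha\sqrt{2\pi\beta}}{2},$$
so \eqref{second} reads $1 - \alpha\sqrt{2\pi\beta}/2 > 0$, which is a direct rewriting of the hypothesis $\alpha\sqrt{\beta} < 2/\sqrt{2\pi}$.

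For \eqref{first}, the same separation of variables yields
$$\int_{\T\times\R} G\!\left(\tfrac{v^{2}}{2} - \zeta\cos(x)\right)\cos(x)\,\dd x\,\dd v \;=\; \alpha\sqrt{\tfrac{2\pi}{\beta}}\,I_{1}(\beta\zeta).$$
Thus I need to exhibit $\zeta > 0$ such that $\alpha\sqrt{2\pi/\beta}\,I_{1}(\beta\zeta) \geq \zeta$, or equivalently (setting $z = \beta\zeta$) such that $\alpha\sqrt{2\pi\beta}\,I_{1}(z)/z \geq 1$. Here I invoke the asymptotic \eqref{besselDL2}, namely $I_{1}(z) \sim e^{z}/\sqrt{2\pi z}$ as $z \to +\infty$, which forces $I_{1}(z)/z \to +\infty$; taking $z$ large enough and then $\zeta = z/\beta$ settles \eqref{first}.

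I do not anticipate any substantive obstacle: the only points deserving care are the normalization of the torus measure (so that the Bessel identity applies with the right constants) and a clean matching of the algebraic form of the hypothesis with the reduced inequality for \eqref{second}. It is worth noting that \eqref{first} in fact holds for any $\alpha,\beta > 0$, thanks to the exponential growth of $I_{1}$; the smallness assumption $\alpha\sqrt{\beta} < 2/\sqrt{2\pi}$ is imposed solely to secure \eqref{second}.
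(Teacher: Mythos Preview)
Your proof is correct and follows essentially the same route as the paper: separate the Gaussian $v$-integral from the angular integral, identify the latter with $I_{1}(\beta\zeta)$ for \eqref{first} and use the exponential growth of $I_{1}$ at infinity, and reduce \eqref{second} to the explicit inequality $1 - \alpha\sqrt{2\pi\beta}/2 > 0$. The only cosmetic difference is that the paper rewrites $\int_{\T}\cos^{2}(x)\,\dd x$ via $\tfrac{1}{2}(I_{0}(0)+I_{2}(0))$ before evaluating it to $\tfrac{1}{2}$, whereas you compute the average directly.
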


\begin{proof}
We have for any $z>0$
$$\int_{\T\times \R} G\left(\frac{v^{2}}{2}-z\cos(x)\right)\cos(x)\dd x\dd v = \alpha \int_{\T\times \R} e^{-\beta\frac{v^{2}}{2}} e^{\beta z\cos(x)}\cos(x)\dd x\dd= \alpha \sqrt{\frac{2\pi}{\beta}} I_{1}(\beta z),$$
and \eqref{first} is clearly guaranteed by \eqref{besselDL2} for $z$ sufficiently large. \\
Using the first formula of \eqref{classictrigo}
$$1+\int_{\T\times \R} G'\left(\frac{v^{2}}{2} \right) \cos^{2}(x) \dd x\dd v = 1-\alpha \sqrt{\beta} \sqrt{2\pi} \left[\frac{I_{0}(0)+I_{2}(0)}{2}\right]= 1- \alpha \sqrt{\beta} \frac{\sqrt{2\pi}}{2}.$$
That last quantity is positive when $\alpha \sqrt{\beta} <\frac{2}{\sqrt{2\pi}},$ and this concludes the proof.
\end{proof}

Now we shall prove that the inhomogeneous states given by Proposition \ref{gauss} are stable in the sense of definition \ref{phy}

\begin{proposition}
\label{stab-gauss}
Let $\eta$ be a stationary solution of \eqref{V-HMF} given by
$$\eta(x,v)=\alpha e^{-\beta\left(\frac{v^{2}}{2}-M_{0}\cos(x)\right),}$$
with $\alpha,\beta,M_{0} \in \R_{+}^{*},$ $\alpha\sqrt{\beta}<\frac{2}{\sqrt{2\pi}},$ and $M_0$ given by Proposition \ref{general}, and satisfying
\begin{equation}
\label{M0gauss}
M_{0}=\alpha\int_{\T\times \R} e^{-\beta \left(\frac{v^{2}}{2}-M_{0}\cos(x)\right)} \cos(x) \dd x\dd v.
\end{equation}
Then $\eta$ is stable in the sense of definition \ref{phy}.
\end{proposition}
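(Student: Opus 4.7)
The plan is to invoke Proposition \ref{general3}, which replaces the stability criterion \eqref{eqphy} by an inequality involving only integrals on $\T\times\R$, with no angle--action variables. Writing $G(s)=\alpha e^{-\beta s}$, so that $G'(s)=-\alpha\beta G(s)/\alpha = -\alpha\beta e^{-\beta s}$, each of the three integrals
$$A:=\int_{\T\times\R}G'(h_0)\dd x\dd v,\qquad B:=\int_{\T\times\R}\cos(x)G'(h_0)\dd x\dd v,\qquad D:=\int_{\T\times\R}\cos^2(x)G'(h_0)\dd x\dd v$$
factors as a Gaussian integral in $v$ times a trigonometric integral in $x$. After performing the Gaussian integration, the $x$-integrals become modified Bessel functions: with $z:=\beta M_0>0$ we obtain
$$A=-\alpha\beta\sqrt{2\pi/\beta}\,I_0(z),\quad B=-\alpha\beta\sqrt{2\pi/\beta}\,I_1(z),\quad D=-\alpha\beta\sqrt{2\pi/\beta}\cdot\tfrac{I_0(z)+I_2(z)}{2},$$
where the last identity uses $\cos^2(x)=(1+\cos(2x))/2$. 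The equation \eqref{M0gauss} defining $M_0$ reads exactly $M_0=\alpha\sqrt{2\pi/\beta}\,I_1(z)$, equivalently $\alpha\beta\sqrt{2\pi/\beta}=z/I_1(z)$.

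Plugging these formulas into the sufficient condition of Proposition \ref{general3}, the stability inequality becomes
$$1-\frac{z}{I_1(z)}\cdot\frac{I_0(z)+I_2(z)}{2}+\frac{z\,I_1(z)}{I_0(z)}>0.$$
Using the classical Bessel recurrence $I_0(z)-I_2(z)=\frac{2}{z}I_1(z)$, which gives $\tfrac{I_0(z)+I_2(z)}{2}=I_0(z)-\tfrac{I_1(z)}{z}$, the first two terms collapse and the inequality simplifies to
$$2+z\left(\frac{I_1(z)}{I_0(z)}-\frac{I_0(z)}{I_1(z)}\right)>0,\qquad\text{i.e.}\qquad z\bigl(I_0(z)^2-I_1(z)^2\bigr)<2\,I_0(z)\,I_1(z).$$
Setting $r:=I_1(z)/I_0(z)\in(0,1)$, this is equivalent to $r^2+\tfrac{2r}{z}-1>0$, whose positive root is precisely $r_*(z)=\bigl(\sqrt{1+z^2}-1\bigr)/z$.

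The final step is then to invoke the strict inequality from Proposition \ref{bessinIn} (applied with $n=0$), which states $I_1(z)/I_0(z)>\bigl(\sqrt{1+z^2}-1\bigr)/z$ for all $z>0$; since the quadratic $r^2+2r/z-1$ is increasing in $r>0$ and vanishes exactly at $r_*(z)$, we deduce the required strict positivity. The main (and essentially only) obstacle is algebraic: one must spot that the cubic-looking expression obtained after substitution collapses, via the Bessel recurrence, to a simple quadratic in $r=I_1/I_0$ whose threshold matches the lower bound in Proposition \ref{bessinIn} exactly. The strictness of that Bessel inequality is therefore critical, and the restriction $\alpha\sqrt{\beta}<2/\sqrt{2\pi}$ plays no role here since it enters only to guarantee the existence of $M_0$ via Proposition \ref{gauss}; the stability holds for every $M_0>0$ satisfying \eqref{M0gauss}.
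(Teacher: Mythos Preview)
Your proof is correct and follows the same overall strategy as the paper: reduce to Proposition \ref{general3}, compute the three integrals in terms of modified Bessel functions, use the self-consistency relation \eqref{M0gauss} to eliminate $\alpha\sqrt{\beta}$ in favor of $z=\beta M_0$, and finish via Proposition \ref{bessinIn}.

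The algebraic route differs slightly. The paper writes $\tfrac{I_0+I_2}{2}=I_1'$ and reduces the criterion to
\[
1-\frac{zI_1'(z)}{I_1(z)}+\frac{zI_1(z)}{I_0(z)}>0,
\]
then applies \emph{both} inequalities of Proposition \ref{bessinIn} (the first with $n=1$, the second with $n=0$), which sum exactly to give strict positivity. You instead use the recurrence $I_0-I_2=\tfrac{2}{z}I_1$ to collapse the expression to $2+z\bigl(\tfrac{I_1}{I_0}-\tfrac{I_0}{I_1}\bigr)>0$, which is a quadratic inequality in $r=I_1/I_0$ whose threshold is precisely the lower bound $(\sqrt{1+z^2}-1)/z$ from Proposition \ref{bessinIn} with $n=0$. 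In fact the two routes are equivalent: via the same recurrence one has $zI_1'/I_1=z/r-1$, so the paper's first inequality $zI_1'/I_1<\sqrt{1+z^2}$ is exactly $r>(\sqrt{1+z^2}-1)/z$, i.e.\ the second inequality again. Your simplification thus makes transparent that only a single Bessel bound is really needed. The side remark that $r\in(0,1)$ is not used in your argument (monotonicity of the quadratic for $r>0$ suffices), and your observation about the role of $\alpha\sqrt{\beta}<2/\sqrt{2\pi}$ is correct.
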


\begin{proof}
We shall prove that the assumptions of Proposition \ref{general3} are fulfilled, which will imply the result. First, we have
\begin{equation}
\notag
\begin{split}
\int_{\R \times \T}G'(h_0(x,v))\cos^{2}(x)\dd x\dd v &=-\alpha \beta \int_{\T \times \R} e^{-\beta \frac{v^{2}}{2}} e^{\beta M_{0}\cos(x)} \cos^{2}(x) \dd x \dd v\\
&= -\alpha\sqrt{\beta} \frac{(2\pi)^{1/2}}{2}\left[I_{0}(\beta M_{0}) + I_{2}(\beta M_{0})\right],
\end{split}
\end{equation}
using the first formula of \eqref{classictrigo}. Then have also
$$\int_{\T\times \R} \cos(x) G'(h_0(x,v)) \dd v\dd x= -\alpha \beta \int_{\T\times \R} \cos(x) e^{-\frac{v^{2}}{2}} e^{\beta M_{0}\cos(x)} \dd x\dd v=-\alpha\sqrt{\beta} (2\pi)^{1/2}I_{1}(\beta M_{0})$$
and
$$\int_{\T\times \R} G'(h_0(x,v))\dd v\dd x = -\alpha\sqrt{\beta} (2\pi)^{1/2}I_{0}(\beta M_{0}).$$
Hence, by Proposition \ref{general3}, it is sufficient to verify that
$$1 -\frac{\alpha \sqrt{\beta}}{2}(2\pi)^{1/2}\left[I_{0}(\beta M_{0})+I_{2}(\beta M_{0})\right] + \alpha \sqrt{\beta}(2\pi)^{1/2} \frac{{I_{1}(\beta M_{0})}^{2}}{I_{0}(\beta M_{0})}>0.$$
Note that
$$I_{0}(\beta M_{0})+I_{2}(\beta M_{0})= 2 I_{1}'(\beta M_{0}).$$
Moreover, $M_0$ satisfies \eqref{M0gauss} which can be written 
$$M_{0}=\alpha \sqrt{\frac{2\pi}{\beta}} I_{1}(\beta M_{0}) \quad\mbox{which implies}\alpha \sqrt{\beta}=\frac{\beta M_{0}}{(2\pi)^{1/2}I_{1}(\beta M_{0})}.$$
Hence it is sufficient to show that
$$1-\frac{\beta M_{0}I_{1}'(\beta M_{0})}{I_{1}(\beta M_{0})}+\frac{\beta M_{0}I_{1}(\beta M_{0})}{I_{0}(\beta M_{0})}>0.$$
But Proposition \ref{bessinIn} implies that
$$\frac{\beta M_{0}I_{1}(\beta M_{0})}{I_{0}(\beta M_{0})}>\sqrt{1+(\beta M_{0})^{2}}-1 \quad\mbox{and}\quad 
-\frac{\beta M_{0}I_{1}'(\beta M_{0})}{I_{1}(\beta M_{0})}>-\sqrt{1+(\beta M_{0})^{2}}.$$
Hence 
$$1-\frac{\beta M_{0}I_{1}'(\beta M_{0})}{I_{1}(\beta M_{0})}+\frac{\beta M_{0}I_{1}(\beta M_{0})}{I_{0}(\beta M_{0})}>0,$$
and the proof is done.
\end{proof}

\section{Action-angle variables \label{actionangle}}

In this section we shall recall how angle-action variables are constructed on each chart $U_{*}.$ It will involve elliptic integrals and Jacobi's elliptic functions, whose definitions and main properties are summarized in the following subsection. 

\subsection{Elliptic integrals, elliptic functions, and elliptic trigonometry}

For $k \in (0,1)$ and $\phi \in (-\pi/2,\pi/2),$ we define the incomplete elliptic integrals by
$$E(\phi,k) = \int_0^\phi \sqrt{1 - k^2 \sin(y)} \dd y \quad \mbox{and} \quad F(\phi,k) = \int_0^\phi \frac{1}{\sqrt{1 - k^2 \sin(y)}} \dd y$$ 
and the complete elliptic integrals by
$$\bE(k) = E\left(\frac{\pi}{2},k\right) \quad \mbox{and} \quad \bK(k) = F\left(\frac{\pi}{2},k\right).$$
We will use the following standard notations: The complementarity modulus $k' = \sqrt{1 - k^2}$, $\bK'(k) = \bK(k')$
and Jacobi's nome
$$q(k) = \exp( -\pi \bK'(k) / \bK(k)). $$
We collect below some useful results for these functions.

\begin{proposition}
\label{propEKq}
The functions $\bE(z)$, $\bK(z)$ and $q(z)$ extend as analytic functions of $z^2$ for $|z|< 1,$ satisfying $\bE(0) = \bK(0) = \frac{\pi}{2}$ and $q(0) = 0$, and we have 
\begin{equation}
\label{rachm0}
\begin{split}
\bE(z) &\sim \frac{\pi}{2} \left( 1 - \frac{1}{4} z^2\right) \quad \mbox{when}\quad z \to 0,\\
\bK(z) &\sim \frac{\pi}{2} \left( 1 + \frac{1}{4} z^2\right)  \quad  \mbox{when}\quad z \to 0,\\
q(z) &\sim  \frac{z^2}{16}  \quad \mbox{when}\quad z \to 0. 
\end{split}
\end{equation} 
Moreover, these functions have logarithmic singularities in $z = 1$:
\begin{equation}
\label{rachm1}
\begin{split}
\bE(z) &\sim 1 - \displaystyle\frac12 (1 - z) \log (1-z)\quad \mbox{when}\quad z \to 1,\\
\bK(z) &\sim - \displaystyle \frac12\log (1-z)  \quad  \mbox{when}\quad z \to 1,\\
q(z) &\sim  \displaystyle 1 + \frac{\pi^2}{\log(1-z)}  \quad \mbox{when}\quad z \to 1.
\end{split}
\end{equation}
More precisely, for all $n \geq 1$ there exists constants $C_n$ such that
\begin{equation}
\label{rachm}
\begin{split}
&\Norm{(1 - z)^{n}\partial_{z}^{n+1}\bE(z)}{L^{\infty}(\frac{1}{2},1)} \leq C_n,\\
&\Norm{(1 - z)^{n}\partial_{z}^n\bK(z)}{L^{\infty}(\frac{1}{2},1)} \leq C_n, \\
&\left\| \log(1-z)^{2}(1 - z)^{n}\partial_{z}^n\Big(\frac{1}{\bK(z)}\Big)\right\|_{L^{\infty}(\frac{1}{2},1)} \leq C_n,\\ 
&\left\| \log(1-z)^{2}(1 - z)^{n}\partial_{z}^n q(z)\right\|_{L^{\infty}(\frac{1}{2},1)} \leq C_n,\\ 
&\left\|(1 - z)^{n}\partial_{z}^n\Big(\frac{1}{1 - q(z)}\Big) \right\|_{L^{\infty}(\frac{1}{2},1)} \leq C_n. 
\end{split}
\end{equation}
\end{proposition}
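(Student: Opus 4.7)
The plan splits into three pieces matching the three groups of claims: analyticity and $z=0$ asymptotics, the logarithmic behavior at $z=1$, and the uniform derivative bounds \eqref{rachm}.

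For the first group, I would expand the integrands $(1-z^2\sin^2\varphi)^{\pm 1/2}$ in the defining integrals of $\bE$ and $\bK$ as binomial series in $z^2$; these converge uniformly in $\varphi\in(0,\pi/2)$ for $|z|<1$, and termwise integration gives analytic functions of $z^2$ on the unit disk with $\bE(0)=\bK(0)=\pi/2$ and the next-order terms stated in \eqref{rachm0}. For $q(z)=\exp(-\pi\bK'(z)/\bK(z))$, I combine $\bK(z)=\pi/2+O(z^2)$ with the classical asymptotic $\bK'(z)=\bK(\sqrt{1-z^2})=\log(4/z)+o(1)$ (itself part of the $z\to 1$ statement applied to $\bK$), yielding $q(z)\sim e^{-2\log(4/z)}=z^2/16$.

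For the $z\to 1$ expansions in \eqref{rachm1}, the core input is the singular expansion $\bK(z)=\log(4/k')+O(k'^2\log k')$ with $k'=\sqrt{1-z^2}$. I would derive this by splitting $\int_0^{\pi/2}$ near $\varphi=\pi/2$ and performing the change of variables $\sin\varphi=\tanh t$, converting the singular part to $\int_0^T \dd t$ with $T\sim\log(2/k')$; then $\log k'=\tfrac{1}{2}\log(1-z)+O(1)$ gives $\bK(z)\sim -\tfrac{1}{2}\log(1-z)$. For $\bE$, the Legendre relation $\bE(k)\bK'(k)+\bE'(k)\bK(k)-\bK(k)\bK'(k)=\pi/2$ combined with the $\bK$ asymptotic gives $\bE(z)\to 1$ and isolates the next-order $(1-z)\log(1-z)$ correction. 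For $q$, use $\bK'(z)\to\pi/2$ as $z\to 1$ so that $\pi\bK'(z)/\bK(z)\sim -\pi^2/\log(1-z)$, and expand the exponential.

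The technical heart is the derivative bound \eqref{rachm}. For $\bK$ itself I would use the hypergeometric ODE
\begin{equation*}
z(1-z^2)\bK''(z)+(1-3z^2)\bK'(z)-z\bK(z)=0,
\end{equation*}
which upon solving for $\bK''$ produces the factor $(1-z^2)^{-1}$ responsible for the weight. Differentiating $n-1$ more times gives a recursion that, by induction starting from the base cases $n=1,2$ (where $\partial_z\bK\sim 1/(2(1-z))$ is extracted directly from the logarithmic leading term), yields $|(1-z)^n\partial_z^n\bK|\lesssim 1$ on $(1/2,1)$. For $\bE$, the identity $\bE'(z)=(\bE(z)-\bK(z))/z$ shifts the index by one and reduces the bound to the $\bK$ estimate. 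For $1/\bK$, $q$, and $1/(1-q)$, I would apply Fa\`a di Bruno's formula, using the lower bound $|\bK(z)|\geq c|\log(1-z)|$ near $z=1$: this lower bound is raised to the power corresponding to the number of factors of $\bK$ in the denominator of each term of Fa\`a di Bruno, explaining the $\log(1-z)^{-2}$ weight in the statements for $1/\bK$ and $q$ (where the dominant term involves $(\bK')^2/\bK^2$-type structure) and the absence of a log weight for $1/(1-q)$, since $1-q\sim\pi^2/|\log(1-z)|$ already absorbs one factor of $\log$.

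The main obstacle is the bookkeeping in the derivative bounds: one must verify that the ODE recursion for $\bK$ propagates the claimed exponent $n$ of $(1-z)$ exactly, without losing powers, and that the Fa\`a di Bruno expansions for $1/\bK$, $q$, and $1/(1-q)$ produce precisely the logarithmic weights stated, uniformly in $n$. The $z\to 1$ asymptotics of $q$ — a logarithmic correction to $1$ rather than a power correction — is what makes the last three estimates of \eqref{rachm} subtler than the first two.
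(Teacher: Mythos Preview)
Your approach is correct but differs from the paper's. The paper simply quotes the classical expansion from Byrd--Friedman,
\[
\bK(z)=\log(4/z')\,K_1(z')+K_2(z'),\qquad (z')^2=1-z^2,
\]
with $K_1,K_2$ analytic in $(z')^2$ and $K_1(0)=1$, which immediately gives the structural form $\bK(z)=\log(1-z)\,A(z)+B(z)$ with $A,B$ smooth on $[\tfrac12,1]$ and $A(1)=-\tfrac12$. From this representation all of \eqref{rachm} follows by Leibniz and Fa\`a di Bruno with essentially no further work: the estimates for $\bE$, $1/\bK$, $q=\exp(-\pi\bK(\sqrt{1-z^2})/\bK(z))$ and $1/(1-q)$ are read off from the fact that $\bK(\sqrt{1-z^2})$ is smooth near $z=1$ and $\bK(z)$ has the explicit $\log$-plus-smooth decomposition. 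Your route---deriving the $\bK$ bounds from the hypergeometric ODE by induction, then cascading to $\bE$ via $\bE'=(\bE-\bK)/z$ and to $1/\bK$, $q$, $1/(1-q)$ via Fa\`a di Bruno with careful tracking of the $\log$ powers---is more self-contained and makes the mechanism transparent, at the cost of more bookkeeping. One small point: your argument for $q(z)\sim z^2/16$ gives the leading asymptotic but not directly the analyticity of $q$ in $z^2$ on $|z|<1$; for that you should either invoke the known power series of the nome (as the paper does, citing formula (900.05) of Byrd--Friedman) or argue via the theta-function inversion.
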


\begin{proof}
The statements of \eqref{rachm0}, \eqref{rachm1} and \eqref{rachm} concerning $\bE(z)$ and $\bK(z)$ are consequences of the power series expansions (900.00) and (900.05) of \cite{Elliptic1} for the function $\bK(z)$, and (900.07) and (900.10) for the function $\bE(z)$ . In particular, near $z = 1$, we have 
$$\bK(z) = \log(4/z') K_1(z') + K_2(z')$$
where $K_1$ and $K_2$ are smooth functions of $(z')^2 = 1 - z^2$ and $K_1(0) = 1$. 
In other words, we have for $ z \in (1/2,1)$, 
$$\bK(z) = \log(1 - z) A(z) + B(z) >0$$
with $A$ and $B$ smooth functions of $z^{2}$ and $A(1) = -\frac12$. The estimates on $1/\bK(z)$ follow from this formula.\\
The first statement \eqref{rachm0} concerning the function $q(z)$ is a consequence of formula (900.05) of \cite{Elliptic1}. The second \eqref{rachm1} of the expansion 
$$q(z) = \exp( -\pi \bK(\sqrt{1 - z^2}) / \bK(z)) = \sum_{n\geq 0} \frac{(-1)^n}{n!} \left(\frac{\pi \bK(\sqrt{1 - z^2}) }{ \bK(z) } \right)^n ,$$
that holds near $z=1.$ Note that as $\bK(z)$ is an analytic function $z^2$, $\bK(\sqrt{1 - z^2})$ is an analytic function of $z$ for $|z| < 1$ which is bounded as well as its derivatives in the vicinity of $z = 1$. This completes the proof of \eqref{rachm}.
\end{proof}

The Jacobi elliptic functions are then defined as follows: first, we define the amplitude $\am(u,k)$ by the formula
\begin{equation}
\label{fuk}
F(\am(u,k),k) = u.
\end{equation}
The first Jacobi elliptic function is then
\begin{equation}
\label{snuk}
\sn(u,k) = \sin(\am(u,k)). 
\end{equation}
The second and third Jacobi elliptic functions are defined by the formulae
$$\cn(u,k) = \sqrt{1 - \sn^2(u,k)} \quad \mbox{and}\quad \dn(u,k) = \sqrt{1 - k^2 \sn^2(u,k)}. $$
We have the following Fourier series for these functions (see formulae (908.00)--(908.03) of \cite{Elliptic1}):
\begin{equation}
\label{amuk}
\begin{split}
&\am (u,k) = \frac{\pi u}{2 \bK(k)} + 2 \sum_{m = 0}^\infty \frac{q(k)^{m+1}}{(m+1) ( 1+ q(k)^{2(m+1)})}
\sin\left((m+1) \frac{\pi u}{\bK(k)}\right), \\
&\sn(u,k) = \frac{2\pi}{k \bK(k)}\sum_{m = 1}^{\infty} \frac{q(k)^{m - \frac{1}{2}}}{1 - q(k)^{2m - 1}} \sin\left( (2 m -1) \frac{\pi u}{2 \bK(k)}\right),\\
&\cn(u,k) = \frac{2\pi}{k \bK(k)}\sum_{m = 1}^{\infty} \frac{q(k)^{m - \frac{1}{2}}}{1 + q(k)^{2m - 1}} \cos\left( (2 m -1) \frac{\pi u}{2 \bK(k)}\right),\\ 
&\dn (u,k) = \frac{\pi}{2 \bK(k)} + \frac{2\pi}{\bK(k)} \sum_{m = 1}^\infty \frac{q(k)^{m}}{  1+ q(k)^{2m}}
\cos\left(m \frac{\pi u}{\bK(k)}\right).
\end{split}
\end{equation}
The following formulae will also be useful (see (2.14), (2.24) in \cite{Elliptic3}): 
\begin{equation}
\label{Milne}
\begin{split}
&\sn^2(u,k) = \frac{\bK(k) - \bE(k)}{k^2 \bK(k)} -\frac{2\pi^2}{k^2 \bK(k)^2} \sum_{m = 1}^\infty \frac{m q(k)^{m}}{1 - q(k)^{2m}} \cos\left( m  \frac{\pi u}{\bK(k)}\right) \\
&\sn(u,k) \cn(u,k) = \frac{2\pi^2}{k^2 \bK(k)^2} \sum_{m = 1}^\infty \frac{m q(k)^{m}}{1 + q(k)^{2m}} \sin\left( m  \frac{\pi u}{\bK(k)}\right)\\
&\sn(u,k) \dn(u,k) = \frac{\pi^2}{k \bK(k)^2} \sum_{m = 1}^\infty \frac{(2 m - 1) q(k)^{m - \frac{1}{2}}}{1 + q(k)^{2m - 1}} \sin\left( (2 m - 1)  \frac{\pi u}{2\bK(k)}\right).
\end{split}
\end{equation}
We shall also need the following elliptic trigonometry identities (see formulae 120.02, 122.00, 122.03 in \cite{Elliptic1})
\begin{equation}
\label{trigo1}
\begin{split}
&\sn(-u,k)=-\sn(u,k), \quad \cn(-u,k)=\cn(u,k),\quad \dn(-u,k)=\dn(u,k),\\
&\sn(u+\bK(k),k)=\sn(\bK(k)-u,k), \quad \cn(u+\bK(k),k)=-k'\frac{\sn(u,k)}{\dn(u,k)},
\end{split}
\end{equation}
of which two straightforward consequences are the following equalities
\begin{equation}
\label{trigo2}
-\sn(u-\bK(k),k)=\sn(u+\bK(k),k) \quad \mbox{and}\quad -\cn(u-\bK(k),k)=\cn(u+\bK(k),k).
\end{equation}
Finally, we recall for completion some classical trigonometry identities which we often use: for a real number $z,$
\begin{equation}
\label{classictrigo}
\begin{split}
&2\cos^{2}(z)=1+\cos(2z),\quad 2\sin^{2}(z)=1-\cos(2z),\quad \sin(2z)=2\sin(z)\cos(z),\\
&\arcsin(\cos(z))=\sqrt{1-z^{2}}.
\end{split}
\end{equation}

\subsection{Action-angle variables on $U_+$ or $U_-$.} 

We will use the following notations: $\epsilon_{+} = 1,$ and $\epsilon_{-} = -1$. The action-angle coordinates are constructed on $U_{\pm}$ as follows.

\begin{proposition}
\label{prop72}
For $*\in \{\pm\},$ there exists a symplectic change of variable $(x,v) \mapsto (\psi,h)$ from $U_{*}$ to the set 
$$V_* := \{(\psi,h) \in \R^2, | h \in (M_0,+\infty), \, \psi \in (-r_{*}(h),r_{*}(h))\}, $$
with 
$$r_{*}(h) = \frac{1}{ k(h) \sqrt{M_0}} \bK\left(\frac{1}{k(h)}\right), \quad \mbox{where} \quad k(h) = \sqrt{\frac{h+M_0}{2M_0}}, $$
such that the flow of the pendulum in the variables $(\psi,h)$ is $h(t) = h(0)$ and $\psi(t) = t + \psi(0)$. 
There exists then a second symplectic change of variables $(\psi,h) \mapsto (\theta,a)$ from $V_*$ to 
$$W_{*}=\left\{(\theta,a) \in \R^2, | a \in J_{*}=\left(\frac{4}{\pi}\sqrt{M_0},+\infty\right), \, \theta \in (-\pi,\pi)\right\}, $$
such that 
$$\left\{
\begin{array}{lcl}
a(h) &=& \displaystyle \frac{4}{\pi}  k(h) \sqrt{M_0} \bE\left(\frac{1}{k(h)}\right)\\[2ex]
\theta(\psi,h) &=& \omega_{*}(h) \psi
\end{array}\right.
\quad\mbox{with}\quad 
\omega_{*}(h) = \frac{\pi k(h) \sqrt{M_0}} {\bK\left(\frac{1}{k(h)}\right)},$$
and so that the flow of the pendulum in the variables $(\theta,a)$ is $a(t) = a(0)$ and $\theta(t) = t \omega_{*}(a) + \theta(0) .$\\
Moreover, we can easily express $x$ and $v$ as functions of the variables $(\theta,h)$ with the formulae
\begin{eqnarray}
x(\theta,h) &=& \epsilon_* 2\, \am \left(\frac{1}{\pi}\bK\left(\frac1{k(h)}\right) \theta , \frac{1}{k(h)}\right), \\
v(\theta,h) &=&  \epsilon_*  2k(h) \sqrt{M_0}\, \dn \left(\frac{1}{\pi}\bK\left(\frac1{k(h)}\right) \theta , \frac{1}{k(h)}\right).
\end{eqnarray}
\end{proposition}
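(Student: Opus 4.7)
The plan is to carry out the standard two-step action-angle construction for a one-dimensional Hamiltonian restricted to a connected component of a non-critical level set: first parameterize orbits by the energy $h$ and the elapsed time $\psi$, then rescale $\psi$ to an angle $\theta$ living on $(-\pi,\pi)$ and identify its conjugate action $a$.

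First, on each chart $U_\pm$ the sign of $v$ is constant, so the energy equation $h_0 = h$ solves uniquely as $v = \epsilon_*\sqrt{2(h + M_0\cos x)}$. The half-angle identity $\cos x = 1 - 2\sin^2(x/2)$ together with the definition $k(h) = \sqrt{(h+M_0)/(2M_0)}$ (so that $1/k(h)\in(0,1)$ is a legitimate elliptic modulus for $h>M_0$) rewrites this as
$$v = 2\epsilon_*k(h)\sqrt{M_0}\sqrt{1 - k(h)^{-2}\sin^2(x/2)}.$$
I would then define $\psi$ as the elapsed time along the orbit from the reference point $x=0$, that is, $\psi(x,h):=\int_0^x dy/v(y,h)$. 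By construction this satisfies $\dot\psi=1$ along the flow of $h_0$, hence $\{\psi,h\}=1$, which in two dimensions is precisely the symplectic condition for $(x,v)\mapsto(\psi,h)$. The substitution $\phi=x/2$ identifies the integrand with that of the incomplete elliptic integral $F$ of modulus $1/k(h)$, giving
$$\psi = \frac{\epsilon_*}{k(h)\sqrt{M_0}}\,F\!\left(\frac{x}{2},\frac{1}{k(h)}\right),$$
and as $x$ sweeps $(-\pi,\pi)$ this produces $\psi\in(-r_*(h),r_*(h))$ with $r_*(h) = \bK(1/k(h))/(k(h)\sqrt{M_0})$.

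For the second change of variables, I would define the action as the average of $v\,dx$ over one full revolution, $a(h):=(2\pi)^{-1}\int_{-\pi}^\pi v(x,h)\,dx$; the same substitution $\phi=x/2$ identifies this with the complete elliptic integral $\bE$, producing $a(h) = (4/\pi)k(h)\sqrt{M_0}\,\bE(1/k(h))$. Differentiating under the integral sign and using $\partial_h v = 1/v$ yields $a'(h) = r_*(h)/\pi$, which I would name $1/\omega_*(h)$. Setting $\theta := \omega_*(h)\psi$ then maps $(-r_*(h),r_*(h))$ bijectively onto $(-\pi,\pi)$ since $\omega_*(h)r_*(h)=\pi$, and the direct computation $d\theta\wedge da = \bigl(\omega_*\,d\psi + \omega_*'\psi\,dh\bigr)\wedge \omega_*^{-1}\,dh = d\psi\wedge dh$ establishes the symplectic character of $(\psi,h)\mapsto(\theta,a)$. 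The conservation of $h$ (hence of $a$) together with $\dot\psi=1$ then translates into $\theta(t) = \omega_*(a)\,t + \theta(0)$.

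To close the proposition, I would invert the relation $F(x/2,1/k(h)) = \theta\bK(1/k(h))/\pi$ via the amplitude function, using $F(\am(\cdot,k),k) = \mathrm{id}$, to obtain $x(\theta,h) = 2\epsilon_*\,\am(\theta\bK(1/k(h))/\pi, 1/k(h))$; substituting this back into the expression for $v$ and recognizing $\sqrt{1 - k^{-2}\sn^2} = \dn(\cdot,1/k)$ produces the stated formula for $v(\theta,h)$. There is no conceptual obstacle here; the only points requiring attention are the consistent bookkeeping of the sign $\epsilon_*$ on $U_-$ (in the square root for $v$, in the elapsed-time integral, and in the inversion via the odd function $\am$), and the correct identification of the elliptic integrals $F$ and $\bE$ after the half-angle substitution.
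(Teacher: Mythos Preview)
Your proposal is correct and follows essentially the same route as the paper's proof: define $\psi$ as the elapsed time along the orbit from $x=0$, identify it with the incomplete elliptic integral $F$ via the half-angle substitution, define the action as the phase-space area integral yielding $\bE$, rescale $\psi$ to $\theta$, and invert through the amplitude function to get the explicit formulae for $x(\theta,h)$ and $v(\theta,h)$. The only cosmetic difference is that the paper packages the symplecticity of the first step through a mixed-variable generating function $S(x,h)=\epsilon_*\int_0^x\sqrt{2(h+M_0\cos y)}\,\dd y$ with $v=\partial_x S$ and $\psi=\partial_h S$, whereas you verify $\{\psi,h\}=1$ and $\dd\theta\wedge\dd a=\dd\psi\wedge\dd h$ directly; these are equivalent in one degree of freedom.
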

\begin{remark}
\label{rem73}
Note that we can check directly from the formulae that $\omega_*(h)$ is increasing, and as $h$ is a strictly increasing function of $h$ (see\eqref{omg1}), we have that $\omega_*(a)$ is decreasing, and $\partial_a \omega_\circ(a) >0$, $a \in J_*$. 
\end{remark}
\begin{proof} The construction is classic by using generating functions. 
Setting $h(x,v) = \frac{v^2}{2} - M_0 \cos(x)$, we have on $U_{*}$ 
$$v (x,h) = \epsilon_*\sqrt{2(h + M_0 \cos (x))}.$$
Note that $v(x,h)=\partial_{x}S(x,h), $ where 
\begin{multline*}
S(x,h) = \epsilon_* \int_{0}^x \sqrt{2(h + M_0 \cos (y))} \dd y 
= \epsilon_* 2 \sqrt{2(h + M_0)}\int_{0}^{x/2} \sqrt{1 - \frac{2M_0}{h + M_0} \sin^2 ( y)} \dd y\\
= \epsilon_* 4 \sqrt{M_0} k(h)\int_{0}^{x/2} \sqrt{1 -  \frac{\sin^2 ( y)}{k(h)^2}} \dd y
=  \epsilon_* 4 k(h)\sqrt{M_0} E\left(\frac{x}{2},\frac{1}{k(h)}\right). 
\end{multline*}
We define then the variable $\psi(x,h)$ by
$$\psi(x,h) = \partial_{h}S(x,h) = \epsilon_* \int_{0}^x \frac{1}{\sqrt{2(h + M_0 \cos (y))}} \dd y=  \epsilon_* \frac{1}{ k(h) \sqrt{M_0}} F\left(\frac{x}{2},\frac{1}{k(h)}\right). $$
By construction, the variables $(\psi,h)$ are symplectic, and $S$ is the mixed-variable generating function (see formula (5.5) p197 of \cite{HLW}). We have by the above formulae
$$\dot \psi(t) = \dot x(t) \partial_{x}\psi(x(t),h(x(t),v(t))) = v(t) \frac{\epsilon_{*}}{\sqrt{2(h(x(t),v(t)) + M_{0} \cos(x(t)))}}=1,$$
and the preservation of the hamiltonian reads $\dot h=0,$ such that the flow associated with $h_{0}$ is in these new variables $\dot{\psi}(t)=1, \dot{h}(t)=0.$\\
Setting now
$$r_{*}(h) = \int_0^\pi\frac{1}{ \sqrt{2(h + M_0 \cos (y))} } \dd y = \frac{1}{ k(h) \sqrt{M_0}} \bK\left(\frac{1}{k(h)}\right),$$
we have $\psi \in [-r_{*}(h),r_{*}(h)],$ and the orbits of the flow are periodic with period $2r_{*}(h),$ which is unsatisfying for us. Thus the second step is to perform another transformation which shall force all trajectories to evolve in a torus. If we define 
\begin{equation}
\label{omg1}
g_{*}(h) =  \frac{1}{\pi} {r_{*}(h)} = \frac{1}{\pi} \int_0^{\pi} \frac{1}{\sqrt{2(h + M_0 \cos (y))}} \dd y =   \frac{\partial}{\partial h} a_{*}(h)  > 0,
\end{equation}
with 
$$a_{*}(h) = \frac{1}{\pi} \int_0^{\pi} \sqrt{2(h + M_0 \cos (y))} \dd y = \frac{4}{\pi}  k(h) \sqrt{M_0} \bE\left(\frac{1}{k(h)}\right),$$
and set $\theta (x,h) = \frac{1}{g(h)} \psi(x,h),$ then for each $*\in \{\pm\},$ the variables $(\theta,a)$ belong to $J_{*}\times (-\pi,\pi)$ and are symplectic. Moreover the flow reduces to 
\begin{equation}
\label{omcr1}
\dot{\theta}(t) = \frac{\dot{\psi}(t)}{g_{*}(h)}=\omega_{*}(a) \quad \mbox{and} \quad \dot{a}(t) = 0, \quad \mbox{with} \quad 
\omega_{*}(a_{*}(h)) = \frac{1}{g_{*}(h)} = \frac{1}{\partial_h a_{*}(h)}.
\end{equation}
Note that we have 
$$g_{*}(h) = \frac{1}{\pi k(h) \sqrt{M_0}} \bK\left(\frac{1}{k(h)}\right), $$
which gives the formula for $\omega_{*}(h).$ \\
We can express $x$ in terms of $\theta$ and $h$ using the formula
$$\theta = \epsilon_* \frac{\pi}{\bK\left(\displaystyle\frac{1}{k(h)}\right)}F\left(\frac{x}2,\frac{1}{k(h)}\right). $$
Using the definition of the first Jacobi elliptic function \eqref{fuk} and \eqref{snuk}, we obtain
$$
\sin(x(\theta,h)/2) = \epsilon_*\sn\left(\frac{1}{\pi}\bK\left(\frac1{k(h)}\right) \theta , \frac{1}{k(h)}\right)
$$
and hence 
$$x(\theta,h) = \epsilon_* 2\, \am \left(\frac{1}{\pi}\bK\left(\frac1{k(h)}\right) \theta , \frac{1}{k(h)}\right).$$
Moreover, we have 
\begin{multline*}
v(\theta,h) = \epsilon_* \sqrt{2(h + M_0 \cos (x))} 
= \epsilon_* \sqrt{2(h+M_0)} \sqrt{1 - \frac{1}{k(h)^2} \sin^2(x(\theta,h)/2)}  \\
= \epsilon_* 2k(h) \sqrt{M_0}\, \dn \left(\frac{1}{\pi}\bK\left(\frac1{k(h)}\right) \theta , \frac{1}{k(h)}\right), 
\end{multline*}
and this ends the proof. 
\end{proof}

We consider now the asymptotics of these functions. Note that the variables $(\theta,h)$ are not symplectic, but we will use them to examine these asymptotics. Note moreover that the change $a_{*}(h)$ defined above allows to compute easily integrals in $(\theta,h)$ by using $\dd a = g_{*}(h) \dd h = \frac{1}{\omega_{*}(h)} \dd h$.

\begin{proposition}
\label{prop73}
For $*\in\{\pm\},$ the functions $\omega_{*}(h)$, $x(\theta,h)$ and $v(\theta,h)$ are analytic for $\theta \in (-\pi,\pi)$ and $h \in (M_0,+\infty)$.\\
The function $\omega_{*}$ exhibits the following asymptotic behavior $\omega_*(h)$ is stricly increasing and we have
\begin{equation}
\label{prop73-1}
\omega_{*}(h) \sim \sqrt{2h} \quad \mbox{when}\quad h \to +\infty,\quad \mbox{and} \quad 
\omega_{*}(h) \sim  \frac{2\pi \sqrt{M_0}}{\log\left(\frac{1}{(h-M_0)}\right)}\quad \mbox{when} \quad h \to M_0^+,
\end{equation}
and there exists constants, $C_r$, $\omega_r \neq 0$ and $\alpha_r \neq 0$ such that for all $r \geq 1$, 
\begin{equation}
\label{prop73-2}
\begin{split}
&\left\| \sqrt{h} \Big[ h^{ - \frac{1}{2} + r} \partial_h^r \omega_{*}(h) - \omega_r\Big]\right\|_{L^\infty(2M_0,+\infty)} \leq C_r, \\ 
&\left\| \log(h-M_{0})^{2}\left[\log(h-M_{0})^{2}(h - M_0)^{ r} \partial_h^r \omega_{*}(h) - \alpha_r\right]\right\|_{L^\infty(M_0,2M_0)} \leq C_r. 
\end{split}
\end{equation}
The change of variable $(h,\theta) \mapsto (x,v)$ satisfies the following estimates: for large $h$ it converges towards the "identity" in the sense that
\begin{equation}
\label{prop73-3}
\begin{split}
&x(\theta,h) \sim \epsilon_* \theta \quad \mbox{when}\quad h \to +\infty,\\
&v(\theta,h) \sim \epsilon_* \sqrt{2h}\quad \mbox{when}\quad h \to +\infty. 
\end{split}
\end{equation}
More precisely, for $r,s \geq 0$ there exist constants $C_{r,s}$ such that
\begin{equation}
\label{prop73-4}
\begin{split}
&\left\| h^{r+1}\partial_h^{r}\partial_{\theta}^s ( x(\theta,h) - \epsilon_* \theta )\right\|_{L^{^\infty} ( ( -\pi,\pi) \times (2M_0,+\infty))} \leq C_{r,s},\\
&\left\|\sqrt{h} \Big[  h^{-\frac12 + r} \partial_h^{r} v(\theta,h)  - \epsilon_* \omega_r  \Big]\right\|_{L^{^\infty} ( ( -\pi,\pi) \times (2M_0,+\infty))} \leq C_{r},\quad\mbox{and}\\
\mbox{for}\quad s \geq 1, \quad &\left\|\sqrt{h} \Big[  h^{\frac12 + r} \partial_h^{r} \partial_{\theta}^s v(\theta,h)  \Big]\right\|_{L^{^\infty} ( ( -\pi,\pi) \times (2M_0,+\infty))} \leq C_{r,s}.
\end{split}
\end{equation}
Finally, we have for $r,s \geq 1,$
\begin{equation}
\label{prop73-5}
\begin{split}
&\Norm{ |(h -M_0)^{r}|\log(h - M_0)|^{-s+2}\partial_h^{r}\partial_{\theta}^s( x(\theta,h) - \epsilon_*\theta )}{L^{^\infty} ( ( -\pi,\pi) \times (M_0,2M_0)} \leq C_{r,s}\quad\mbox{and}\\
&\Norm{ |(h -M_0)^{r}|\log(h - M_0)|^{-s + 3}\partial_h^{r}\partial_{\theta}^s( v(\theta,h))}{L^{^\infty} ( ( -\pi,\pi) \times (M_0,2M_0)} \leq C_{r,s}
\end{split}
\end{equation}
for some constants $C_{r,s}$, and 
$$\Norm{|\log(h - M_0)|^{-s} \partial_\theta^s (x(\theta,h) - \epsilon_*\theta)}{L^{^\infty} ( ( -\pi,\pi) \times (M_0,2M_0)} \leq C_s$$
for $s \geq 1$ and some constant $C_s$. 
\end{proposition}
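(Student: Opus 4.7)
The plan is to derive every assertion from the explicit formulas of Proposition \ref{prop72}, the elliptic integral estimates of Proposition \ref{propEKq}, and the Fourier series \eqref{amuk} for the Jacobi elliptic functions.

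\textbf{Analyticity and asymptotics of $\omega_*$.} For $h \in (M_0,+\infty)$ we have $1/k(h) = \sqrt{2M_0/(h+M_0)} \in (0,1)$, so the analyticity of $\bK(z)$ as a function of $z^2$ on $|z|<1$ together with that of $\am$ and $\dn$ yields analyticity of $\omega_*$, $x$ and $v$ on the stated domain. As $1/k(h)^2 = 2M_0/(h+M_0)\to 0$ when $h\to+\infty$, the formula $\omega_*(h)=\pi k(h)\sqrt{M_0}/\bK(1/k(h))$ together with $\bK(0)=\pi/2$ gives $\omega_*(h)\sim 2k(h)\sqrt{M_0}=\sqrt{2(h+M_0)}\sim\sqrt{2h}$. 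Near the separatrix, $1-1/k(h)^2 = (h-M_0)/(h+M_0)\to 0$, so $\bK(1/k(h))\sim -\tfrac12\log(h-M_0)$ by \eqref{rachm1}, yielding $\omega_*(h)\sim 2\pi\sqrt{M_0}/\log(1/(h-M_0))$. The derivative estimates \eqref{prop73-2} then follow by Faà di Bruno applied to the composition $h\mapsto 1/k(h)\mapsto\omega_*(h)$: the bounds \eqref{rachm} on $\partial_z^n(1/\bK)$ near $z=1$ furnish the logarithmic singularities at $M_0$, while the smoothness at $z=0$ combined with $\partial_h^n k(h)\sim h^{1/2-n}$ gives the algebraic decay at infinity.

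\textbf{Estimates on $x$ and $v$ away from the separatrix.} Substituting $u=\bK(1/k(h))\theta/\pi$ into the Fourier series \eqref{amuk} for $\am$ and $\dn$ and using $\omega_*(h)=\pi k(h)\sqrt{M_0}/\bK(1/k(h))$ yields
\begin{align*}
x(\theta,h)-\epsilon_*\theta &= 4\epsilon_*\sum_{m=0}^{\infty}\frac{q(1/k(h))^{m+1}}{(m+1)\bigl(1+q(1/k(h))^{2(m+1)}\bigr)}\sin((m+1)\theta),\\
v(\theta,h) &= \epsilon_*\omega_*(h)\Bigl(1+4\sum_{m=1}^{\infty}\frac{q(1/k(h))^{m}}{1+q(1/k(h))^{2m}}\cos(m\theta)\Bigr).
\end{align*}
For large $h$, \eqref{rachm0} gives $q(1/k(h))\sim M_0/(16h)$, so each series converges geometrically with ratio $\mathcal{O}(1/h)$. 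Differentiations in $\theta$ introduce polynomial factors in $m$ that are absorbed by this rapid decay, so the first nonzero term dominates and \eqref{prop73-3}, \eqref{prop73-4} follow. Differentiations in $h$ are handled by Faà di Bruno applied to $h\mapsto q(1/k(h))$, using the estimates \eqref{rachm}, and combined with the previously obtained bounds on $\omega_*$.

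\textbf{Behavior near the separatrix $h\to M_0^+$.} Here $q(1/k(h))\to 1$ with $1-q(1/k(h))\sim \pi^2/|\log(h-M_0)|$ by \eqref{rachm1}, so the geometric argument degenerates. The elementary bound $\sum_{m\geq 1}m^s q^m\lesssim (1-q)^{-s-1}$ shows that each $\theta$-derivative multiplies the estimate by a factor comparable to $|\log(h-M_0)|$; the extra $1/(m+1)$ in the series for $x-\epsilon_*\theta$ saves one such factor, producing the bound $|\log(h-M_0)|^s$ for pure $\theta$-derivatives of $x-\epsilon_*\theta$, which is the last estimate claimed. For the mixed derivatives \eqref{prop73-5}, one differentiates $r$ times in $h$ using \eqref{rachm}, which controls $\partial_h^r q(1/k(h))$ by $(h-M_0)^{-r}|\log(h-M_0)|^{-2}$, and combines this with the $\theta$-sum estimate. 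The main technical hurdle is precisely this bookkeeping: each $h$-derivative saves two logarithmic factors (from $|\log|^{-2}$ in \eqref{rachm}) while each $\theta$-derivative costs one, and one must match exactly the exponents $-s+2$ and $-s+3$ appearing in \eqref{prop73-5}; the same logic applied to $v$, which carries an additional prefactor of $\omega_*(h)\sim 1/|\log(h-M_0)|$, explains the shift by one extra logarithmic power between the $x$ and $v$ estimates.
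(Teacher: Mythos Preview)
Your proposal is correct and follows essentially the same approach as the paper: derive everything from the explicit formulas of Proposition~\ref{prop72}, use the Fourier expansions \eqref{amuk} of the Jacobi functions, and control the resulting series via the asymptotics \eqref{rachm0}--\eqref{rachm} of $\bK$ and $q$ together with Fa\`a di Bruno. The only presentational difference is that the paper packages the Fourier tail into a single auxiliary function $R(\theta,q)=4\sum_{m\ge 0} q^{m+1}e^{i(m+1)\theta}/(1+q^{2(m+1)})$ and proves the uniform bounds $\partial_q^r\partial_\theta^s R=\mathcal{O}((1-q)^{-(r+s+1)})$ once and for all, then reads off the $x$ and $v$ estimates from the relations $x-\epsilon_*\theta=\epsilon_*\mathrm{Im}\,\tilde R$ (with the extra $1/(m+1)$) and $v=\omega_*\,\partial_\theta x$; your direct treatment of the series accomplishes the same thing.
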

\begin{proof}
We begin with the study of the function $\omega_{*},$ and prove \eqref{prop73-1} and \eqref{prop73-2}. 
When $h \to +\infty,$ $k(h)$ goes to $+\infty$, and $1/k(h)$ goes to $0$. Hence as $\bK(z)$ extends near $z\sim0$ as a smooth function in $z^2$ with $\bK(0) = \frac{\pi}{2}$,  we have that 
$$
\omega_{*}(h) = \frac{\pi k(h) \sqrt{M_0}} {\bK\left(\frac{1}{k(h)}\right)}
= \sqrt{2(h + M_0)}\left(1 +  \Omega\left(\frac{1}{h+M_0}\right)\right). 
$$
where $\Omega$ is analytic in a neighborhood of $0$. This shows that on $(2M_0, +\infty)$, we have $\omega_{*}(h) = \sqrt{2h}( 1+ \tilde{\Omega}(1/h))$ for some analytic function $\tilde \Omega$ on $(0,\frac{1}{2M_0})$. 
This gives the first asymptotic of \eqref{prop73-1}, and also the first estimate of \eqref{prop73-2}.

When $h\to M_{0}^{+},$ $1/k(h)$ goes to $1$ and is smooth in a neighborhood of $M_0.$ Moreover $1 - \frac{1}{k(h)} \sim  \frac{1}{2M_0} (h - M_0)$ when $h \to M_0^+$. Asymptotics \eqref{rachm1} show that 
$$\omega_{*}(h) =  \frac{\pi k(h) \sqrt{M_0}} {\bK\left(\frac{1}{k(h)}\right)} \sim \pi \sqrt{M_0} \frac{2}{(- \log (1 - \frac{1}{k(h)}))} \quad \mbox{when} \quad h \to M_0^+,$$
from which we infer the asymptotics of $\omega_{*}$. The second estimate of \eqref{prop73-2} is easily deduced using the estimate on $1/\bK$ from \eqref{rachm}, and also the fact that $1/k(h)$ is smooth on $(M_{0},2M_{0}).$

Let us now study the functions $x(\theta,h)$ and $v(\theta,h).$ Using the expansions of \eqref{amuk}, and the expressions of $x(\theta,h)$ and $v(\theta,h)$ from Proposition \ref{prop72}, we write 
$$x(\theta,h) = \epsilon_* \theta  + \epsilon_*  4 \sum_{m = 0}^\infty \frac{q(1/k(h))^{m+1}}{(m+1) ( 1+ q(1/k(h))^{2(m+1)})}
\sin((m+1) \theta) $$
and
$$
v(\theta,h) = \epsilon_*  \omega_{*}(h) \left(  1 + 4  \sum_{m = 0}^\infty \frac{q(1/k(h))^{m+1}}{  1+ q(1/k(h))^{2(m+1)}}
\cos((m+1) \theta)\right) 
= \omega_{*}(h) \partial_\theta x(\theta,h). 
$$
Note that by construction, $x(\theta,h)$ is bounded for $(\theta,h) \in (-\pi,\pi) \times (M_0,+\infty)$. It is then clear that we have to consider and study the auxiliary function
$$R(\theta,q) =  4 \sum_{m = 0}^\infty \frac{q^{m+1}}{ 1+ q^{2(m+1)}} e^{i (m+1) \theta}.$$
This function is well defined for $|q| < 1$ and when $q \to 0$, we have 
\begin{equation}
\label{mji1}
R(\theta,q) \sim 4 q e^{i \theta}.
\end{equation}
Moreover, we have the estimate
\begin{equation}
\label{mji2}
\Norm{\partial_q^r \partial_\theta^s R(\theta,q)}{L^\infty(0,\frac12)}\leq C_{r,s}
\end{equation}
for all $r,s \geq 0$ and some constant $C_{r,s}$.To prove this, note that as $(1+x^{2})^{-1}$ and its derivatives are bounded near $x=0,$ we can write that
$$\partial_q^r \partial_\theta^s R(q,\theta) =  \sum_{m \geq  r -1}^\infty (m+1)^{r+s} q^{m+1 - r}e^{i (m+1) \theta}R_m^{r,s}(q)$$
where $R_m^{r,s}(q) \leq C_{r,s}$ for all $m$ and $q \in (0,\frac12).$ Estimate \eqref{mji2} follows easily.
In addition, when $q \to 1$, we have 
\begin{equation}
\label{kristine}
\Norm{R(\theta,q)}{L^{\infty}(\frac12,1)} \leq 4 \sum_{m = 0}^\infty q^{m+1}\leq 4 \left(\frac{q}{1 - q}\right).
\end{equation}
We also want to estimate the derivatives of this function. We can proceed as previously, and use the fact that $x \mapsto (1 + x^2)^{-1}$ is bounded as well as all its derivatives near $x = 1$, so that we can write for $r \geq 1$  and $s \geq 0$; 
$$\partial_q^r \partial_\theta^s R(\theta,q) =  \sum_{m \geq  r -1}^\infty (m+1)^{r+s} q^{m+1 - r}e^{i (m+1) \theta}R_m^{r,s}(q)$$
where $R_m^{r,s}(q) \leq C_{r,s}$ for all $m$ and $q \in (\frac12,1)$. We deduce that 
\begin{equation}
\label{schue}
\Norm{\partial_q^r \partial_\theta^s R(\theta,q)}{L^{\infty}(\frac12,1)} \leq \frac{C_{r,s}}{(1 - q)^{r+s +1}}, 
\end{equation}
for some constant $C_{r,s}$. 

We can now study the asymptotic behaviors of $x(\theta,h)$ and $v(\theta,h),$ starting with the case when $h \to +\infty.$ In this case $1/k(h)$ goes to $0,$ and $q(1/k(h))$ is a smooth function of $1/k(h)^2 = 2M_0/(h+M_0) \to 0.$ As we have
$$x(\theta,a)=\eps_{*}\left[\theta + \mathrm{Im}\left(R\left(\theta,q\left(\frac{1}{k(h)}\right)\right)\right)\right],$$ 
we deduce from then \eqref{mji1}, \eqref{mji2} and the Fa\`a di Bruno formula that 
$$\partial_h^r \partial_\theta^s  ( x(\theta,h) - \epsilon_* \theta)  = \mathcal{O}(h^{-r - 1}) $$
uniformly in $\theta$. The results for $v$ are obtained from the previous result and the fact that $v(h,\theta) = \omega_{*}(h) \partial_\theta x(\theta,h) \sim \epsilon_*  \omega_{*}(h) \sim \epsilon_* \sqrt{2h}$ when $h \to 0$. In other words asymptotics \eqref{prop73-3} and estimates \eqref{prop73-4} are proved.

It remains to study the behaviors of $x(\theta,h)$ and $v(\theta,h)$ when $h \to M_0^+.$  Using the properties of $1/(1- q(z))$ (see \eqref{rachm1} and \eqref{rachm}) near $z = 1$, we obtain from \eqref{schue}
$$\Norm{\partial_q^r \partial_\theta^s R(q(1/k(h)),\theta)}{L^{\infty}(M_0,2M_0)} \leq C \log\left( \frac{1}{h - M_0}\right) ^{r + s +1}, $$
and moreover, from \eqref{rachm}
$$\partial_h^r q(1/k(h)) = \mathcal{O}\left(\left(\frac{1}{h-M_0}\right)^{r} \frac{1}{\log (h - M_0)^{2}}\right). $$
This shows that for $s \geq 1$, 
$$\partial_\theta^s ( x(\theta,h) - \epsilon_* \theta) = \mathcal{O} \left(\log\left( \frac{1}{h - M_0}\right) ^{s+1}\right), $$
and using the Fa\`a di Bruno formula, we see that for $s \geq 1$ and $r \geq 1$, 
$$\partial_h^r \partial_{\theta}^s (x(\theta,h) - \epsilon_* \theta) = \mathcal{O} \left( \left(\frac{1}{h-M_0}\right)^{r} \log \left(\frac{1}{h - M_0}\right)^{s-1}\right).  $$
As $v(\theta,h) = \omega_{*}(h) \partial_\theta x(\theta,h)$, we deduce from the estimates on $\omega_{*}(h)$ that 
$$\partial_h^r \partial_{\theta}^s v(\theta,h)  = \mathcal{O} \left( \left(\frac{1}{h-M_0}\right)^{r} \log \left(\frac{1}{h - M_0}\right)^{s-3}\right), $$
and this proves estimates \eqref{prop73-5}. To conclude the proof, let us say that the analyticity of $\omega_{*}(h),$ $x(\theta,h)$ and $v(\theta,h)$ stated in the Proposition follows from the analyticity properties of the special functions stated at the beginning of section \ref{actionangle}.
\end{proof}

Now let us consider a function $f(x,v)$ that is continuous and its restriction $f^*$ to $U_*$. We are interested in the behavior of the Fourier coefficients \eqref{coefou} that by a slight abuse of notation, we will also denote by
$$f_\ell^*(h) = \frac{1}{2\pi}\int_{0}^{2\pi} f^*(x(\theta,h), v(\theta,h))e^{- i \ell \theta} \dd \theta $$
these coefficients in the variable $h$.\\ In the special cases where f is either the cosine or the sine function, we have the following explicit expressions:

\begin{proposition}
\label{prop74}
For $*\in\{\pm\},$ and $(\theta,a) \in J_{*}\times (-\pi,\pi),$
$$\cos(x(\theta,a)) = \sum_{\ell \in \Z} C_\ell^*(a) e^{i\ell \theta}\quad \mbox{and} \quad \sin(x(\theta,a)) = \sum_{\ell \in \Z} S_\ell^*(a) e^{i\ell \theta}. $$
with, in terms of the variable $h$, 
\begin{equation}
\label{coeffouriercos1}
\begin{split}
&C^*_0(h) = 1 - 2 k(h)^2  + 2k(h)^2 \frac{\bE\left(\frac1{k(h)}\right)}{ \bK\left(\frac1{k(h)}\right)} \\
&C^*_\ell(h) = C^*_{-\ell}(h)=\frac{2\pi^2 k(h)^2 }{ \bK\left(\frac1{k(h)}\right)^2} \left( \frac{ |\ell| q\left(\frac1{k(h)}\right)^{|\ell|}}{1 - q\left(\frac1{k(h)}\right)^{2|\ell|}} \right), \quad \ell >0
\end{split}
\end{equation}
and
\begin{equation}
\label{coeffouriersin1}
\begin{split}
&S^*_0(h) = 0 \\
&S^*_\ell(h) =-S^*_{-\ell}(h)= \epsilon_* (-i) \frac{2\pi^2 k(h)^2 }{ \bK\left(\frac1{k(h)}\right)^2} \left( \frac{ |\ell| q\left(\frac1{k(h)}\right)^{|\ell|}}{1 + q\left(\frac1{k(h)}\right)^{2|\ell|}} \right), \quad \ell >0.
\end{split}
\end{equation}
\end{proposition}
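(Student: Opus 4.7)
The plan is to derive the two Fourier expansions by plugging the explicit formula for $x(\theta,h)$ from Proposition \ref{prop72} into the double-angle identities, which converts the problem into finding Fourier expansions of $\sn^2$ and $\sn\cdot\cn$, and then reading off the coefficients from formulas \eqref{Milne}.

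More precisely, set $u = \frac{1}{\pi}\bK(1/k(h))\theta$, so that by Proposition \ref{prop72}, $x(\theta,h) = 2\epsilon_* \am(u, 1/k(h))$. Using the double-angle identities and the fact that $\sin(\am(u,k)) = \sn(u,k)$, together with $\cos(2\epsilon_*\alpha) = 1 - 2\sin^2\alpha$ and $\sin(2\epsilon_*\alpha) = 2\epsilon_*\sin\alpha\cos\alpha$, I would derive the identities
\begin{equation*}
\cos(x(\theta,h)) = 1 - 2\sn^2(u, 1/k(h)), \qquad \sin(x(\theta,h)) = 2\epsilon_*\, \sn(u, 1/k(h))\,\cn(u, 1/k(h)).
\end{equation*}
Note that the sign $\epsilon_*$ does not appear in $\cos(x)$ (since squaring the amplitude wipes it out) but remains a prefactor for $\sin(x)$, which is consistent with the stated sign symmetries $C^*_\ell = C^*_{-\ell}$ and $S^*_\ell = -S^*_{-\ell}$.

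Next I would substitute the formulas \eqref{Milne} with the modulus $k$ there replaced by $1/k(h)$. The crucial observation is that, with our choice $u = \frac{1}{\pi}\bK(1/k(h))\theta$, we have $\frac{\pi u}{\bK(1/k(h))} = \theta$, so the Fourier series in \eqref{Milne} become genuine Fourier series in $\theta$. Applying the first line of \eqref{Milne} to $\cos(x) = 1 - 2\sn^2(u,1/k(h))$ yields
\begin{equation*}
\cos(x(\theta,h)) = 1 - \frac{2 k(h)^2(\bK(1/k(h))-\bE(1/k(h)))}{\bK(1/k(h))} + \frac{4\pi^2 k(h)^2}{\bK(1/k(h))^2}\sum_{m\geq 1}\frac{m\, q(1/k(h))^m}{1 - q(1/k(h))^{2m}}\cos(m\theta),
\end{equation*}
and the constant and nonzero Fourier coefficients immediately match \eqref{coeffouriercos1} after writing $\cos(m\theta) = \tfrac{1}{2}(e^{im\theta}+e^{-im\theta})$. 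Similarly, plugging the second line of \eqref{Milne} into $\sin(x) = 2\epsilon_*\sn(u,1/k(h))\cn(u,1/k(h))$ gives an expansion purely in $\sin(m\theta)$, and writing $\sin(m\theta) = \tfrac{1}{2i}(e^{im\theta}-e^{-im\theta})$ produces \eqref{coeffouriersin1}, with the expected $-i$, the sign $\epsilon_*$, the $1+q^{2\ell}$ denominator (rather than $1-q^{2\ell}$), and the odd-in-$\ell$ behaviour $S^*_\ell = -S^*_{-\ell}$ together with $S^*_0 = 0$.

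There is no substantive obstacle: the proof is essentially a computation combining Proposition \ref{prop72} with the classical Fourier expansions \eqref{Milne} of Jacobi's elliptic functions. The only care required is bookkeeping with the modulus (which is $1/k(h)$, not $k(h)$) and with the sign $\epsilon_*$, which disappears for $\cos(x)$ but persists for $\sin(x)$. Once the identifications are carried out, the explicit formulas follow directly.
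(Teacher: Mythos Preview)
Your proposal is correct and follows essentially the same approach as the paper: both express $\cos(x(\theta,h))$ and $\sin(x(\theta,h))$ via the double-angle identities as $1-2\sn^2$ and $2\epsilon_*\,\sn\,\cn$ respectively, then substitute the Fourier expansions \eqref{Milne} with modulus $1/k(h)$ and identify the coefficients. Your bookkeeping on the modulus and the sign $\epsilon_*$ is accurate.
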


\begin{proof}
Recall that we have
$$\sin(x(\theta,h)/2) =\epsilon_*\sn\left(\frac{1}{\pi}\bK\left(\frac1{k(h)}\right) \theta , \frac{1}{k(h)}\right). $$ 
Hence using the expansion of $\sn^{2}$ in \eqref{Milne} and the second formula of \eqref{classictrigo}, we obtain 
\begin{multline*}
\cos(x(\theta,h)) = 1 - 2\sn^2\left(\frac{1}{\pi}\bK\left(\frac1{k(h)}\right) \theta , \frac{1}{k(h)}\right) \\ 
= 1 - 2 k(h)^2  \frac{\bK\left(\frac1{k(h)}\right) - \bE\left(\frac1{k(h)}\right)}{ \bK\left(\frac1{k(h)}\right)} 
+
  \frac{4\pi^2 k(h)^2 }{ \bK\left(\frac1{k(h)}\right)^2} \sum_{m = 1}^\infty \frac{m q\left(\frac1{k(h)}\right)^{m}}{1 - q\left(\frac1{k(h)}\right)^{2m}} \cos( m \theta). 
\end{multline*}
Formulae \eqref{coeffouriercos1} follow easily. Moreover, we have 
$$x(\theta,h) = \epsilon_* 2\, \am \left(\frac{1}{\pi}\bK\left(\frac1{k(h)}\right) \theta , \frac{1}{k(h)}\right). $$
Hence, using the third formula of \eqref{classictrigo}, the definitions of the functions $\sn(u,k)$ and $\cn(u,k),$ and the expansion of $\sn(u,k)\cn(u,k)$ from \eqref{Milne}, we infer 
\begin{eqnarray*}
\sin(x(\theta,h)) &=& 2\epsilon_* \sn\left(\frac{1}{\pi}\bK\left(\frac1{k(h)}\right) \theta , \frac{1}{k(h)}\right)\cn\left(\frac{1}{\pi}\bK\left(\frac1{k(h)}\right) \theta , \frac{1}{k(h)}\right)\\
&=& 2\epsilon_* \frac{2 k(h)^2 \pi^2}{\bK(1/k(h))^2} \sum_{m = 1}^\infty \frac{m q(1/k(h))^{m}}{1 + q(1/k(h))^{2m}} \sin( m \theta). 
\end{eqnarray*}
Formulae \eqref{coeffouriersin1} follow easily.
\end{proof}

For some smooth function $f$, we can estimate the generalized Fourier coefficients $f_{\ell}^*(a)$ in the following way.

\begin{proposition}
\label{prop75}
Assume that $f$ is a function satisfying 
$$\max_{|\alpha| \leq m} \Norm{\langle v \rangle^{\mu}\partial_{x,v}^{\alpha} f}{L^{\infty}(U_*)}\leq C_{p,m}$$
for some $m\geq 1$ and $\mu \geq 0$.  Then, we have for $r+s \leq m$.
\begin{equation}
\begin{split}
&\partial_h^r f_\ell^*(h) = \mathcal{O} \left( \frac{1}{|\ell|^s }\Big(\frac{1}{h-M_0}\Big)^{r} \log \Big(\frac{1}{h - M_0}\Big)^{s}\right) \quad\mbox{when}\quad h \to M_0^+\\
&\partial_h^r f_\ell^*(h) = \mathcal{O}\left(\frac{1}{|\ell|^s} \frac{1}{h^{\mu/2}} \right) \quad\mbox{when}\quad h \to +\infty. 
\end{split}
\end{equation}
\end{proposition}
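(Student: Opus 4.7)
The plan is to combine integration by parts in $\theta$ (to capture the $|\ell|^{-s}$ decay) with differentiation in $h$, and to control the resulting mixed derivatives of $F(\theta,h) := f^*(x(\theta,h),v(\theta,h))$ via the Faà di Bruno formula, using the estimates on $x(\theta,h)$ and $v(\theta,h)$ provided by Proposition \ref{prop73}. More precisely, $2\pi$-periodicity in $\theta$ and $s$ successive integrations by parts give
\[
\partial_h^r f_\ell^*(h) \;=\; \frac{1}{(i\ell)^s}\frac{1}{2\pi}\int_{-\pi}^{\pi} \partial_h^r\partial_\theta^s F(\theta,h)\,e^{-i\ell\theta}\,\dd\theta,
\]
so it suffices to prove an $L^\infty$ bound on $\partial_h^r\partial_\theta^s F$ with the announced weights for $r+s\le m$.

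By the Faà di Bruno formula, $\partial_h^r\partial_\theta^s F$ is a finite sum of terms of the form
\[
(\partial_{x,v}^\alpha f)(x,v)\cdot\prod_i\partial_h^{r_i}\partial_\theta^{s_i}x\cdot\prod_j\partial_h^{\tilde r_j}\partial_\theta^{\tilde s_j}v,
\]
with $|\alpha|\le r+s$, $\sum r_i+\sum\tilde r_j=r$, $\sum s_i+\sum \tilde s_j=s$, and each pair of indices nontrivial whenever $(r,s)\neq(0,0)$. The assumption on $f$ yields $|(\partial_{x,v}^\alpha f)(x,v)|\le C\langle v\rangle^{-\mu}$, and the remaining factors are controlled by Proposition \ref{prop73}.

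For $h\to M_0^+$, the factor $\langle v\rangle^{-\mu}$ is harmless since $v$ stays bounded on the compact $h$-range $(M_0,2M_0)$, and the relevant estimates are $|\partial_h^{r'}\partial_\theta^{s'}(x-\epsilon_*\theta)|\lesssim (h-M_0)^{-r'}|\log(h-M_0)|^{s'-2}$ and $|\partial_h^{r'}\partial_\theta^{s'}v|\lesssim (h-M_0)^{-r'}|\log(h-M_0)|^{s'-3}$ for $r',s'\ge 1$, together with $|\partial_\theta^{s'}(x-\epsilon_*\theta)|\lesssim|\log(h-M_0)|^{s'}$ for $s'\ge 1$. Pure $h$-derivatives $\partial_h^{r'}x$ (case $s'=0$) are recovered from the Fourier representation of $x-\epsilon_*\theta$ combined with the estimates on $q(1/k(h))$ from Proposition \ref{propEKq}, giving at worst $(h-M_0)^{-r'}$. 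One then observes that in every distribution of $(r,s)$ among the factors, each $h$-derivative produces exactly one factor $(h-M_0)^{-1}$, while the logarithmic factors combine to at most $|\log(h-M_0)|^s$ in the worst case (reached when all $\theta$-derivatives concentrate on a single factor of $x-\epsilon_*\theta$); summing the finitely many Faà di Bruno terms yields the claim.

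For $h\to+\infty$, the required decay is produced by the weight $\langle v\rangle^{-\mu}$: since $v(\theta,h)\sim\epsilon_*\sqrt{2h}$ we have $\langle v\rangle^{-\mu}\le C\,h^{-\mu/2}$. By Proposition \ref{prop73}, every nontrivial factor $\partial_h^{\tilde r_j}\partial_\theta^{\tilde s_j}v$ is $O(h^{-1/2})$ and every nontrivial factor $\partial_h^{r_i}\partial_\theta^{s_i}x$ is $O(1)$, so the product of coordinate-derivative factors never contributes a positive power of $h$ and the bound follows. The main obstacle is the bookkeeping near $h=M_0$: one must verify that no exceptional distribution of the $r$ derivatives $\partial_h$ and $s$ derivatives $\partial_\theta$ among the coordinate factors violates the target exponents $(h-M_0)^{-r}|\log(h-M_0)|^{s}$, which reduces to a finite case analysis driven by Proposition \ref{prop73}.
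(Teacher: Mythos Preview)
Your proof is correct and follows essentially the same approach as the paper's own proof: integrate by parts $s$ times in $\theta$ to extract the factor $|\ell|^{-s}$, then control $\partial_h^r\partial_\theta^s\big(f^*(x(\theta,h),v(\theta,h))\big)$ by the Fa\`a di Bruno formula combined with the derivative estimates of Proposition~\ref{prop73}. The paper's proof is in fact only a two-line sketch invoking exactly these ingredients; your write-up simply supplies more of the bookkeeping (the case analysis near $h=M_0$ and the observation that $\langle v(\theta,h)\rangle^{-\mu}\lesssim h^{-\mu/2}$ for large $h$), so there is no substantive difference in strategy.
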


\begin{proof} 
We have 
$$\partial_h^r f_\ell^*(h) = \frac{1}{\ell^s}(-i)^s \frac{1}{2\pi}\int_{(-\pi,\pi)} \partial_h^r \partial_{\theta}^s ( f^*(x(\theta,h), v(\theta,h))) e^{- i \ell \theta} \dd \theta,$$
and using Fa\`a di Bruno formula, the hypothesis on $f,$ and \eqref{prop73-5}, we infer that when $h \to M_0^+,$
$$\partial_h^r f_\ell^*(h) = \mathcal{O} \left( \frac{1}{|\ell|^s }\Big(\frac{1}{h-M_0}\Big)^{r} \log \Big(\frac{1}{h - M_0}\Big)^{s}\right) .$$
Moreover, by the same arguments, and using also \eqref{prop73-4}, we obtain  
$$\partial_h^r f_\ell^*(h) = \mathcal{O}\Big(\frac{1}{|\ell|^s} \frac{1}{h^{p/2}} \Big), \qquad h \to +\infty.$$
\end{proof}

From this result, we obtain the following: 

\begin{proposition}
\label{decaydehors}
Assume that $f$ an $\varphi$ are real functions satisfying 
$$\max_{|\alpha | \leq m} \Norm{\langle v \rangle^{\mu}\partial_{x,v}^\alpha f}{L^{\infty}(U_*)}\leq C_{\mu,m}
\quad\mbox{and}\quad
\max_{|\alpha | \leq M} \Norm{\partial_{x,v}^\alpha  \varphi}{L^{\infty}(U_*)}\leq C_{M}. $$
for some $m \geq 1 + r$,  $\mu > 2$ and $M \geq 3+r$. 
Then we have for all $t\geq 1,$
$$\left|\int_{U_*} \varphi(\psi_t(x,v)) f(x,v) \dd v -  \int_{M_0}^{+\infty} f_0^*(h) \varphi_0^*(h) \frac{1}{\omega_{*}(h)} \dd h \right|  \leq \frac{C}{(1 + t)^{r+1}} .$$
\end{proposition}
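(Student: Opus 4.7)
The idea is to pass to action-angle variables on $U_*$, where the flow reduces to $\theta\mapsto\theta+t\omega_*(a)$, and then extract the $t$-decay via repeated integration by parts in $h$. Noting that the angular Fourier coefficients of $\varphi\circ\psi_t$ are $\varphi_\ell^*(a)\,e^{i\ell t\omega_*(a)}$ and using \eqref{oslo}--\eqref{trondheim},
\[
\int_{U_*}f(x,v)\,\varphi(\psi_t(x,v))\,\dd x\,\dd v
=\sum_{\ell\in\Z}\int_{M_0}^{+\infty}f_\ell^*(h)\,\varphi_{-\ell}^*(h)\,\frac{e^{i\ell t\omega_*(h)}}{\omega_*(h)}\,\dd h.
\]
The $\ell=0$ term is exactly the explicit limit claimed, so the task reduces to bounding $\sum_{\ell\neq 0}I_\ell(t)$ by $C/(1+t)^{r+1}$, where $I_\ell(t)$ denotes the $\ell$-th integral on the right.

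For $\ell\neq 0$, Remark~\ref{rem73} gives $\omega_*'(h)>0$ on $(M_0,+\infty)$, so there is no stationary phase, and one can integrate by parts $r+1$ times using the identity $(i\ell t\,\omega_*'(h))^{-1}\partial_h e^{i\ell t\omega_*(h)}=e^{i\ell t\omega_*(h)}$. This produces a prefactor $(\ell t)^{-(r+1)}$, boundary terms at $h=M_0^+$ and $h=+\infty$, and a volume remainder. The boundary contributions vanish: at $h=+\infty$, the weight $\langle v\rangle^{-\mu}$ together with $v\sim\sqrt{2h}$ (Proposition~\ref{prop73}) forces $f_\ell^*=O(h^{-\mu/2})$ with $\mu>2$, which dominates the polynomial growth $1/\omega_*'(h)\sim\sqrt{2h}$; at $h=M_0^+$, the factor $1/\omega_*'(h)\sim C(h-M_0)\log^2(h-M_0)$ vanishes linearly up to logs, while $f_\ell^*$, $\varphi_{-\ell}^*$ and $1/\omega_*$ carry only logarithmic blow-ups (Propositions~\ref{prop73}--\ref{prop75}), so every boundary term tends to $0$.

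For the volume remainder, the Leibniz rule distributes the $r+1$ derivatives among $f_\ell^*$, $\varphi_{-\ell}^*$, $1/\omega_*$ and the various powers of $1/\omega_*'$ produced at each IBP step. Using Proposition~\ref{prop75} we trade two angular derivatives on $\varphi_{-\ell}^*$ --- permitted since $M\geq r+3$ --- for an $|\ell|^{-2}$ decay. The quantitative asymptotics of Propositions~\ref{prop73} and \ref{prop75} then show that every term in the expansion is bounded by $C\,|\ell|^{-2}$ times an absolutely integrable function of $h$ on $(M_0,+\infty)$: logarithmic singularities at $h=M_0$ are integrable, and the $h^{-\mu/2}$ decay at infinity is integrable for $\mu>2$. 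Summing $\sum_{\ell\neq 0}|\ell|^{-2}<\infty$ then yields the claimed bound.

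\textbf{Main obstacle.} The delicate bookkeeping is at the separatrix $h=M_0$, where each of the factors $f_\ell^*$, $\varphi_{-\ell}^*$, $1/\omega_*$, and the derivatives of $1/\omega_*'$ carries some logarithmic singularity, but these are all overwhelmed by the linear vanishing of $1/\omega_*'$ itself. This becomes particularly transparent after the change of variable $s=\omega_*(h)$, under which $h-M_0\sim e^{-2\pi\sqrt{M_0}/s}$ as $s\to 0^+$, turning every algebraic/logarithmic singularity into an exponentially small factor in $s$; the rest is a careful term-by-term verification that the finitely many terms generated by the Leibniz expansion all enjoy the required $|\ell|^{-2}$ decay and $L^1_h$-integrability, using only the asymptotics already recorded in Propositions~\ref{prop73} and \ref{prop75}.
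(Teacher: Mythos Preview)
Your proposal is correct and follows essentially the same strategy as the paper: pass to action-angle variables, isolate the $\ell=0$ term as the limit, and integrate by parts $r+1$ times in $h$ using that $\partial_h\omega_*$ never vanishes; the boundary contributions vanish because the linear vanishing of $1/\omega_*'$ at $h=M_0^+$ beats every logarithmic blow-up, while the $\langle v\rangle^{-\mu}$ weight with $\mu>2$ controls $h\to+\infty$. The paper packages the iterated integration by parts through the operators $\mathrm{D}_{\ell,h}^{\top}$ and an inductive lemma (Lemma~\ref{lemrec}) that tracks the $(h-M_0)\cdot(\log)^{m(\beta)}$ behaviour of $(\mathrm{D}_{\ell,h}^{\top})^\beta\bigl(f_\ell^*\varphi_{-\ell}^*/\omega_*\bigr)$ near the separatrix and the $h^{-\mu/2}$ decay at infinity; this is exactly the ``careful term-by-term verification'' you allude to.

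The one genuine, if minor, difference is how you obtain summability in $\ell$. The paper extracts only the factor $|\ell|^{-(r+1)}$ coming automatically from the $r+1$ applications of $\mathrm{D}_{\ell,h}^{\top}$ and bounds the remaining amplitude uniformly in $\ell$ (Proposition~\ref{prop75} with $s=0$); this already sums for $r\geq 1$. You instead spend two angular derivatives on $\varphi_{-\ell}^*$ (Proposition~\ref{prop75} with $s=2$, permitted by $M\geq r+3$) to get an extra $|\ell|^{-2}$. Your route makes the hypothesis $M\geq r+3$ sharp and also covers $r=0$, at the cost of carrying an additional $\log^2$ factor through the separatrix estimates --- harmless, since all singularities there are logarithmic. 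The change of variable $s=\omega_*(h)$ you mention is a pleasant way to see why the separatrix causes no trouble, but the paper stays in $h$ throughout and proves Lemma~\ref{lemrec} directly.
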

%

\begin{proof}
Recall that for $*\in\{\pm\},$ we have by formulae \eqref{oslo} and \eqref{trondheim}, and the identity $\psi_{t}(\theta,h)=\theta + t \omega_{*}(h),$
$$\int_{U_*} \varphi(\psi_t(x,v)) f(x,v) \dd v \dd x  = \sum_{\ell\in \Z} \int_{(M_0,+\infty)}f_\ell^*(h) \varphi_{-\ell}^*(h)  e^{i t \ell \omega_{*}(h) } \frac{1}{\omega_{*}(h)} \dd h.$$
We shall then integrate by parts with respect to $h,$ using in particular the fact that $\partial_{h} \omega_{*}$ does not vanish on $U_{\pm}.$ For $\ell\neq 0,$ we may thus define two operators $\mathrm{D}_{\ell,h}$ and $\mathrm{D}_{\ell,h}^{\top}$ acting on function $G(h)$ of $h$ by
$$\mathrm{D}_{\ell,h}G=\frac{1}{i\ell \partial_{h}\omega_{*}} \partial_{h}G \quad \mbox{and} \quad \mathrm{D}_{\ell,h}^{\top}G =-\partial_{h}\left(\frac{G}{i\ell\partial_{h}\omega_{*}}\right).$$
We shall in particular consider iterations of the operator $\mathrm{D}_{\ell,h}^{\top},$ and use the special notation
$$(\mathrm{D}_{\ell,h}^{\top})^{0}G =-\frac{G}{i\ell\partial_{h}\omega_{*}}.$$
We have the following useful Lemma (whose proof is postponed to the end of the current one):
\begin{lemma}
\label{lemrec}
For all $0\leq \beta \leq r+1,$
$$(\mathrm{D}_{\ell,h}^{\top})^{\beta}\left( f_{\ell}^{*}(h)\varphi_{-\ell}^{*}(h) \omega_{*}(h)^{-1} \right) =\mathcal{O}(|\ell|^{-\beta}h^{-\mu/2}) \quad \mbox{when} \quad h\to +\infty,$$
and
$$(\mathrm{D}_{\ell,h}^{\top})^{\beta}\left( f_{\ell}^{*}(h)\varphi_{-\ell}^{*}(h) \omega_{*}(h)^{-1} \right) =\mathcal{O}\left(|\ell|^{-\beta}(h-M_{0})\log\left(\frac{1}{h-M_{0}}\right)^{m(\beta)}\right) \quad \mbox{when} \quad h\to M_{0}^{+},$$
for some integer $m(\beta)>0.$
\end{lemma}
This Lemma says essentially that the singularities coming from $f_{\ell}^*$ and $\varphi_{-\ell}^{*}$ at the separatix are cancelled by the one of $\partial_{h}\omega_{*}.$ This shows in particular that for all $0< \beta\leq r+1,$
$$\lim_{h\to M_{0}^{+}} (\mathrm{D}_{\ell,h}^{\top})^{\beta}\left( f_{\ell}^{*}(h)\varphi_{-\ell}^{*}(h) \omega_{*}(h)^{-1} \right) \frac{1}{\partial_{h}\omega_{*}(h)}=0,$$
$$\lim_{h\to +\infty} (\mathrm{D}_{\ell,h}^{\top})^{\beta}\left( f_{\ell}^{*}(h)\varphi_{-\ell}^{*}(h) \omega_{*}(h)^{-1} \right) \frac{1}{\partial_{h}\omega_{*}(h)}=0,$$
and that
$$\left\|(\mathrm{D}_{\ell,h}^{\top})^{\beta}\left( f_{\ell}^{*}(h)\varphi_{-\ell}^{*}(h) \omega_{*}(h)^{-1} \right)\right\|_{L^{1}((M_{0},+\infty))} \leq \frac{C_{\beta}}{|\ell|^{\beta}},$$
using also \eqref{prop73-2} and the hypothesis $\mu > 2.$\\
Integrating by parts $r+1$ times while using Lemma \ref{lemrec}, we have for $\ell \neq 0$
\begin{equation}
\notag
\begin{split}
\int_{(M_{0},+\infty)}f_{\ell}^{*}(h)\varphi_{-\ell}^{*}(h)e^{i\ell t \omega_{*}(h)} \frac{\dd h}{\omega_{*}(h)} & =\frac{1}{t}\int_{(M_{0},+\infty)}f_{\ell}^{*}(h)\varphi_{-\ell}^{*}(h)\mathrm{D}_{\ell,h}(e^{i\ell t \omega_{*}(h)}) \frac{\dd h}{\omega_{*}(h)}\\
&=\frac{1}{t}\left[ -(\mathrm{D}_{\ell,h}^{\top})^{0}\left( f_{\ell}^{*}(h)\varphi_{-\ell}^{*}(h) \omega_{*}(h)^{-1} \right) e^{i\ell t\omega_{*}(h)}\right]_{M_{0}}^{+\infty}\\
&+ \frac{1}{t}\int_{(M_{0},+\infty)} e^{i\ell t \omega_{*}(h)} (\mathrm{D}_{\ell,h}^{\top})\left( f_{\ell}^{*}(h)\varphi_{-\ell}^{*}(h)\omega_{*}(h)^{-1} \right) \dd h,\\
&=\frac{1}{t}\int_{(M_{0},+\infty)} \mathrm{D}_{\ell,h}(e^{i\ell t \omega_{*}(h)} )(\mathrm{D}_{\ell,h}^{\top})\left( f_{\ell}^{*}(h)\varphi_{-\ell}^{*}(h)\omega_{*}(h)^{-1} \right) \dd h,\\
&= \frac{1}{t^{r+1}}\int_{(M_{0},+\infty)} e^{i\ell t \omega_{*}(h)} (\mathrm{D}_{\ell,h}^{\top})^{r+1}\left( f_{\ell}^{*}(h)\varphi_{-\ell}^{*}(h)\omega_{*}(h)^{-1} \right) \dd h,
\end{split}
\end{equation}
such that
\begin{multline*}\left| \int_{(M_{0},+\infty)}f_{\ell}^{*}(h)\varphi_{-\ell}^{*}(h)e^{i\ell t \omega_{*}(h)} \frac{\dd h}{\omega_{*}(h)} \right|\leq \frac{1}{t^{r+1}}\left\| (\mathrm{D}_{\ell,h}^{\top})^{r+1}\left( f_{\ell}^{*}(h)\varphi_{-\ell}^{*}(h)\omega_{*}(h)^{-1} \right) \right\|_{L^{1}((M_{0},+\infty))} \\ \leq \frac{C_{r+1}}{t^{r+1}|\ell|^{r+1}},
\end{multline*}
and summing in $\ell$ gives the result.
\end{proof}

\begin{Proofof}{[Lemma \ref{lemrec}]}
Let us first prove the estimate when $h\sim M_{0}.$ We shall rather prove by induction on $\beta\leq r+1$ that
\begin{equation}
\label{1}
(\mathrm{D}_{\ell,h}^{\top})^{\beta}\left( f_{\ell}^{*}(h)\varphi_{-\ell}^{*}(h) \omega_{*}(h)^{-1} \right) =\mathcal{O}\left(|\ell|^{-\beta}(h-M_{0})\log\left(\frac{1}{h-M_{0}}\right)^{m(\beta)}\right),
\end{equation}
and, if $\beta\leq r,$
\begin{equation}
\label{2}
\partial_{h}(\mathrm{D}_{\ell,h}^{\top})^{\beta}\left( f_{\ell}^{*}(h)\varphi_{-\ell}^{*}(h) \omega_{*}(h)^{-1} \right) =\mathcal{O}\left(|\ell|^{-\beta}\log\left(\frac{1}{h-M_{0}}\right)^{n(\beta)}\right),
\end{equation}
for some integers $m(\beta),n(\beta).$\\
When $\beta=0,$ we have by Proposition \ref{prop75}, \eqref{prop73-1} and \eqref{prop73-2}
$$-(\mathrm{D}_{\ell,h}^{\top})^{0}\left( f_{\ell}^{*}(h)\varphi_{-\ell}^{*}(h) \omega_{*}(h)^{-1}\right)=\frac{f_{\ell}^{*}(h)\varphi_{-\ell}^{*}(h) }{\omega_{*}(h)\partial_{h}\omega_{*}(h)}=\mathcal{O}\left((h-M_{0})\log\left(\frac{1}{(h-M_{0})}\right)^{3}\right),$$
and
\begin{equation}
\notag
\begin{split}
\partial_{h}\left(\frac{f_{\ell}^{*}(h)\varphi_{-\ell}^{*}(h)}{ \omega_{*}(h)\partial_{h}\omega_{*}(h)}\right)&=\frac{\partial_{h}\left(f_{\ell}^{*}(h)\varphi_{-\ell}^{*}(h)\right)}{\omega_{*}(h)\partial_{h}\omega_{*}(h)} +  \left(f_{\ell}^{*}(h)\varphi_{-\ell}^{*}(h)\right) \partial_{h}\left(\frac{1}{\omega_{*}(h)\partial_{h}\omega_{*}(h)}\right)\\
&= \mathcal{O}\left(\log\left(\frac{1}{(h-M_{0})}\right)^{3}\right).
\end{split}
\end{equation}
For $\beta\geq 1,$ if \eqref{1} and \eqref{2} hold at rank $\beta-1,$ then by using the formula
$$(\mathrm{D}_{\ell,h}^{\top})^{\beta}\left( f_{\ell}^{*}(h)\varphi_{-\ell}^{*}(h) \omega_{*}(h)^{-1} \right) = -\partial_{h}\left(\frac{(\mathrm{D}_{\ell,h}^{\top})^{\beta-1}\left( f_{\ell}^{*}(h)\varphi_{-\ell}^{*}(h) \omega_{*}(h)^{-1} \right)}{i\ell \partial_{h}\omega_{*}(h)}\right),$$
and the estimate \eqref{prop73-2} for $\partial_{h}\omega_{*}(h)^{-1},$
one easily proves \eqref{1} at rank $\beta.$ As long as $\beta\leq r,$ one deduces then \eqref{2} at rank $\beta$ by writing that 
$$\log\left(\frac{1}{h-M_{0}}\right)^{-m(\beta)}(\mathrm{D}_{\ell,h}^{\top})^{\beta}\left( f_{\ell}^{*}(h)\varphi_{-\ell}^{*}(h) \omega_{*}(h)^{-1} \right) =\mathcal{O}\left(|\ell|^{-\beta}(h-M_{0})\right),$$
which shows that
$$\partial_{h}\left(\log\left(\frac{1}{h-M_{0}}\right)^{-m(\beta)}(\mathrm{D}_{\ell,h}^{\top})^{\beta}\left( f_{\ell}^{*}(h)\varphi_{-\ell}^{*}(h) \omega_{*}(h)^{-1} \right)\right)=\mathcal{O}(|\ell|^{-\beta}),$$
and gives the result.\\
The asymptotics when $h\to +\infty$ are easier to obtain, as for any $s\leq \beta\leq r+1,$ Proposition \ref{prop75} implies that
$$\partial_{h}^{s}\left(f_{\ell}^{*}(h)\varphi_{-\ell}^{*}(h)\right)=\mathcal{O}(h^{-\mu/2}),$$
while \eqref{prop73-2} shows that
$$\partial_{h}^{s}\left(\frac{1}{\omega_{*}(h)\partial_{h}\omega_{*}(h)} \right)= \mathcal{O}(1),$$
such that Leibniz's formula yields the result. Note that the contribution $|\ell|^{-\beta}$ comes obviously from iterations of the operator $\mathrm{D}_{\ell,h}^{\top}.$
\end{Proofof}

\subsection{Action-angle in $U_{\circ}$}

In this subsection we provide a rather complete description of the change of variable in $U_{\circ}.$

\begin{proposition}
\label{prop78}
There exists a symplectic change of variable $(x,v) \mapsto (\psi,h)$ from $U_{\circ}$ to the set 
$$V_\circ := \{(\psi,h) \in \R^2, | h \in (-M_0,M_0), \, \psi \in (-r_{\circ}(h),r_{\circ}(h))\}, $$
with $$
r_{\circ}(h) = \frac{2 k(h)}{ \sqrt{M_0}} \bK(k(h)), \quad \mbox{where} \quad k(h) = \sqrt{\frac{h+M_0}{2M_0}}. $$
such that the flow of the pendulum in the variable $(\psi,h)$ is $h(t) = h(0)$ and $\psi(t) = t + \psi(0)$. 
There exists then a second symplectic change of variables $(\psi,h) \mapsto (\theta,a)$ from $V_{\circ}$ to 
$$\{(\theta,a) \in \R^2, | a \in J_{\circ}=\left(0,\frac{8}{\pi}\sqrt{M_0}\right), \, \theta \in (-\pi,\pi)\}, $$
such that 
$$\left\{
\begin{array}{lcl}
a(h) &=& \displaystyle  \frac{8 \sqrt{M_0}}{\pi} ( \bE(k(h)) - (1 - k(h) ^2) \bK(k(h)) ) \\[2ex]
\theta(\psi,h) &=& \omega_{\circ}(h) \psi
\end{array}\right.
\quad\mbox{with}\quad 
\omega_{\circ}(h) = \frac{\pi\sqrt{M_0}  }{2  \bK(k(h))}$$
and so that the flow of the pendulum in the variables $(\theta,a)$ is $a(t) = a(0)$ and $\theta(t) = t \omega_{\circ}(a(0)) + \psi(0).$\\
Moreover, we can easily express $(x,v)$ as functions of the variables $(\theta,h)$ with the formulae
\begin{eqnarray}
x(\theta,h) &=&  2 \arcsin \left( k(h) \sn\left(\frac{2}{\pi} \bK(k(h)) \left(\theta + \frac{\pi}{2}\right),k(h)\right) \right) , \label{220} \\
v(\theta,h) &=&    2k(h) \sqrt{M_0}\, \cn \left(\frac{2}{\pi} \bK(k(h)) \left(\theta + \frac{\pi}{2}\right),k(h)\right). \label{221}
\end{eqnarray}
\end{proposition}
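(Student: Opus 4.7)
The proof parallels that of Proposition \ref{prop72}, so I would follow the same three-step procedure: introduce an intermediate symplectic variable $\psi$ via a generating function, compute the action $a(h)$ as a complete elliptic integral, and extract explicit formulas for $x$ and $v$ via the Jacobi amplitude. The principal novelty is that orbits in $U_\circ$ are closed loops: on such an orbit $h \in (-M_0,M_0)$, the coordinate $x$ oscillates between $\pm x_m$ with $x_m = \arccos(-h/M_0)$, and $v$ changes sign. On the upper half-orbit $\{v>0\}$ the generating function $S(x,h)= \int_0^x \sqrt{2(h+M_0\cos y)}\,\dd y$ produces an intermediate variable $\psi(x,h)=\partial_h S$, and the key substitution $\sin(x/2)=k(h)\sin\phi$ — legal in $U_\circ$ since $\sin^2(x/2)\leq k(h)^2$ — gives
\[
\psi(x,h) = \int_0^x \frac{\dd y}{\sqrt{2(h+M_0\cos y)}} = \frac{1}{\sqrt{M_0}}\,F(\phi(x,h),k(h)).
\]
The lower half-orbit is handled via the symmetry $(x,v)\mapsto(x,-v)$, and the two determinations are glued so that $\psi$ becomes a global signed time coordinate on the orbit, ranging over $(-r_\circ(h), r_\circ(h))$ with $r_\circ(h)$ equal to the half-period.

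Next I would compute the action directly from the loop integral
\[
a(h) = \frac{1}{2\pi}\oint_{h_0=h} v\,\dd x = \frac{4}{\pi}\int_0^{x_m}\sqrt{2(h+M_0\cos x)}\,\dd x
\]
by orbital symmetry, and reduce it to an elliptic integral via the same substitution. The identity
\[
\int_0^{\pi/2}\frac{\cos^2\phi}{\sqrt{1-k^2\sin^2\phi}}\,\dd\phi = \frac{\bE(k)-(1-k^2)\bK(k)}{k^2}
\]
yields exactly the claimed formula for $a(h)$. Differentiating using $\partial_h k = 1/(4M_0k)$ together with the standard derivatives $\dd\bE/\dd k = (\bE-\bK)/k$ and $\dd\bK/\dd k = (\bE-(1-k^2)\bK)/(k(1-k^2))$, essentially every term cancels and one is left with $\partial_h a(h) = 2\bK(k(h))/(\pi\sqrt{M_0})$, hence $\omega_\circ(h) = 1/\partial_h a(h) = \pi\sqrt{M_0}/(2\bK(k(h)))$. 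The range $J_\circ=(0,8\sqrt{M_0}/\pi)$ follows by evaluating $a(h)$ at the boundary values: $h\to -M_0^+$ gives $k\to 0$ and $\bE-\bK=O(k^2)$, so $a\to 0$; while $h\to M_0^-$ gives $k\to 1$, $(1-k^2)\bK\to 0$ by \eqref{rachm1}, and $\bE(1)=1$.

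For the explicit formulas, the inversion $\sqrt{M_0}\psi=F(\phi,k(h))$ produces $\phi=\am(\sqrt{M_0}\psi,k(h))$ and hence $\sin(x/2)=k(h)\sn(\sqrt{M_0}\psi,k(h))$. Writing $\psi=\theta/\omega_\circ(h)$ and inserting a shift of $\pi/2$ in $\theta$ (which amounts to placing $\theta=0$ at the turning point $(x_m,0)$ rather than at a zero-crossing of $x$), one recovers formula \eqref{220}, while $v=2k(h)\sqrt{M_0}\cos\phi=2k(h)\sqrt{M_0}\cn(\sqrt{M_0}\psi,k(h))$ gives formula \eqref{221} after the same shift. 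The main obstacle, absent in Proposition \ref{prop72}, is the careful bookkeeping of signs and branches in the gluing of the two half-orbits into a single chart $V_\circ$ and the verification that $(\psi,h)$ are globally symplectic there: this is where the shift by $\pi/2$ in the arguments of $\sn$ and $\cn$ originates, reflecting that the natural origin of the angle variable on a closed orbit lies at a turning point, not at a zero-crossing, of $x$.
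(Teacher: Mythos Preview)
Your proposal is correct and follows essentially the same route as the paper. Two points of contact are worth sharpening. First, the paper chooses the base point of the generating function at the turning point $x_0(h)$ rather than at $0$: writing $S(x,h)=\epsilon_*\int_{x_0(h)}^x\sqrt{2(h+M_0\cos y)}\,\dd y$ produces $\psi=\epsilon_*\frac{1}{\sqrt{M_0}}\big(F(\Theta,k(h))-\bK(k(h))\big)$, so the $-\bK$ offset (hence your $\pi/2$ shift in $\theta$) comes out automatically rather than being inserted by hand. Second, for the gluing of the two half-orbits the paper makes the verification completely explicit: one obtains a priori two formulas depending on $\epsilon_*=\pm1$, and the identities \eqref{trigo2}, namely $\sn(u-\bK,k)=-\sn(u+\bK,k)$ and $\cn(u-\bK,k)=-\cn(u+\bK,k)$, show that the resulting expressions for $\sin(x/2)$ and $v$ are in fact independent of $\epsilon_*$. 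This replaces your ``careful bookkeeping'' by a one-line check.
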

\begin{remark}
\label{rem79}
Note that we can check directly from the formulae that $\omega_\circ(h)$ is decreasing, and as $h$ is a strictly increasing function of $h$ (see\eqref{omh}),  $\omega_\circ(a)$ is decreasing, and $\partial_a \omega_\circ(a) <0$, $a \in J_\circ$. 
\end{remark}
\begin{proof}
In this case, we have $h \in (-M_0,M_0)$ and we can write 
$$v (x,h) = \epsilon_*\sqrt{2(h + M_0 \cos (x))}$$
defined for $h + M_0 \cos(x) \geq 0$, where $\epsilon_* = 1$ if $v \geq 0$ and $\epsilon_* = -1$ if $v \leq 0$. 
Using this representation, both sets $U_{\circ,+} = U_\circ \cap \{ v \geq 0\}$ and $U_{\circ,-} = U_\circ \cap \{ v \leq 0\}$ can be parametrized as  
\begin{eqnarray*}
U_{\circ,*} &=& \{ (x,h) \in \T \times (-M_0,M_0) \, | \, h \geq -M_0 \cos(x)\}\\
&=& \{ (x,h) \, | \, h \times (-M_0,M_0), x \in ( -x_0(h),x_0(h))\},
\end{eqnarray*}
where $x_0(h)$ is the solution in $[0,\pi]$ of the equation $h + M_0 \cos(x_0(h)) = 0$.
Note that have 
$$\sin^2(x_0(h)/2)  = k(h)^2.$$ 
For $x \in (-x_0(h),x_0(h))$, 
let us define $\Theta(x,h) \in (-\pi/2,\pi/2)$ as the unique solution of 
$$k(h) \sin (\Theta(x,h)) = \sin(x/2). $$
This solution is well defined when $x \in (-x_0(h),x_0(h))$ as $\frac{1}{k(h)}\sin(\frac{x}2) \in (0,1)$ in this interval.
Note that $\Theta(0,h) = 0$, $\Theta(-x_0(h),h) = -\frac{\pi}{2}$ and $\Theta(x_0(h),h) = \frac{\pi}{2}$. Moreover, by taking the derivative with respect to $x$, we have 
$$
k(h) \cos( \Theta(x,h)) \partial_x \Theta(x,h) = \frac12\cos(x/2)=\frac{1}{2} \sqrt{1 - k(h)^2 \sin(\Theta(x,h))^2}. 
$$
In particular, we have 
$$\sqrt{1 - \sin^2(\Theta(x,h))} = \frac{1}{2 k(h)  \partial_x \Theta(x,h)} \sqrt{1 - k(h)^2 \sin(\Theta(x,h))^2}. $$
Then we have 
$$U_{\circ,*} = \{ (x,h) |  h \in (-M_0,M_0), \Theta(x,h) \in (-\pi/2,\pi/2)\}, $$
and we can define the generatrix function $S(x,h)$ on $U_{\circ,*}$ by the formula
\begin{eqnarray*}
S(x,h) &=& \epsilon_* \int_{x_0(h)}^x \sqrt{2(h + M_0 \cos(y))} \dd y\\
&=&  \epsilon_* \int_{x_0(h)}^{x} \sqrt{2(h + M_0) - 4M_0 \sin^2(y/2)} \dd y\\
&=& \epsilon_* 2k(h)\sqrt{M_0} \int_{x_0(h)}^{x} \sqrt{1 - \frac{\sin^2(y/2)}{k(h)^2}} \dd y \\
&=& \epsilon_* 2 k(h) \sqrt{M_0} \int_{x_0(h)}^{x} \sqrt{1 - \sin^2 (\Theta(y,h)}) \dd y\\
&=& \epsilon_* 2 k(h) \sqrt{M_0}  \int_{x_0(h)}^{x} \sqrt{1 -  \sin^2 (\Theta(y,h)}) \frac{1}{\partial_x\Theta(y,h) } \partial_x \Theta(y,h) \dd y \\
&=&\epsilon_* 4 k(h)^2 \sqrt{M_0}  \int_\frac{\pi}{2}^{\Theta(x,h)} \frac{1 -  \sin^2 (\phi)}{\sqrt{1 - k(h)^2 \sin^2 (\phi) }} \dd \phi. 
\end{eqnarray*}
Whence 
\begin{multline*}
S(x,h) = \epsilon_* 4\sqrt{M_0}  ( E(\Theta(x,h),k(h)) - \bE(k(h)))  \\ - \epsilon_* 4\sqrt{M_0} (1 - k(h) ^2) ( F(\Theta(x,h),k(h)) -  \bK(k(h)) ). 
\end{multline*}
Note that this function is equal to zero on the axis $\{ v = 0, x \in [0,\pi]\}$ and has a discontinuity in the axis $\{ v = 0, x \in [-\pi,0]\}$. 
We can then define 
\begin{eqnarray*}
\psi(x,h) &=& \frac{\partial}{\partial h } S(x,h)\\
&=& \epsilon_*  \int_{x_0(h)}^x \frac{1}{\sqrt{2(h + M_0 \cos(y))}} \dd y\\
&=& \epsilon_*  \frac{1}{\sqrt{M_0}}  \int_{x_0(h)}^{x} \frac{1}{\sqrt{1 - k(h)^2 \sin^2 (\Theta(y,h)}) }\partial_x \Theta(x,h) \dd y\\
&=&  \epsilon_*  \frac{1}{\sqrt{M_0}} ( F(\Theta(x,k(h)),k(h)) - \bK(k(h) ) ), 
\end{eqnarray*}
where we used the fact that $h + M_0 \cos( x_0(h)) = 0.$\\
On a period, we thus see that $\psi(x,h) \in \left(-2\frac{\bK(k(h))}{\sqrt{M_0}},  2 \frac{\bK(k(h))}{\sqrt{M_0}} \right).$ 
Hence the function 
$$\theta = \epsilon_* \frac{\pi\sqrt{M_0}  }{2  \bK(k(h))}\psi =  \epsilon_* \frac{\pi}{2} \frac{F(\Theta(x,k(h)),k(h))}{\bK(k(h))}  - \epsilon_* \frac{\pi}{2}$$
belongs to $(-\pi,\pi),$ and is such that the point $x_0(h)$ correspond to the angle $\theta = 0$. 
The frequency and action are then given by 
$$\omega_{\circ} (h) = \frac{\pi\sqrt{M_0}  }{2  \bK(k(h))}  = \frac{1}{g_{\circ}(h)}\quad\mbox{and}\quad  
a_{\circ}(h) =  \frac{8 \sqrt{M_0}}{\pi} ( \bE(k(h)) - (1 - k(h) ^2) \bK(k(h)), $$
as 
\begin{eqnarray}
\partial_h a_{\circ}(h) &=& \frac{8 \sqrt{M_0}}{\pi}  \frac{1}{4 M_0 k(h) } (  \partial_k \bE(k(h)) - (1 - k(h)^2) \partial_k \bK(k(h) + 2 k(h) \bK(k(h)) \nonumber\\
&=& \frac{8 \sqrt{M_0}}{\pi}  \frac{1}{4 M_0 k(h) } ( k(h) \bK(k(h)) = g_{\circ}(h) >0.\label{omh} 
\end{eqnarray}
Using the properties of the elliptic functions, we have 
$$\Theta(x(\theta,h) ,h)) = \epsilon_* \am\left(\frac{2}{\pi} \bK(k(h)) \left(\theta+ \epsilon_* \frac{\pi}{2}\right),k(h)\right) $$
hence
\begin{equation}
\label{provisoire}
\sin(x(\theta,h)/2) = \epsilon_*  k(h) \sn\left(\frac{2}{\pi} \bK(k(h)) \left(\theta+ \epsilon_* \frac{\pi}{2}\right),k(h)\right) .
\end{equation}
Now using the first formula of \eqref{trigo2} we see that the expression \eqref{provisoire} does actually not depend on the value of $\epsilon^*=\pm1,$ and thus
$$\sin(x(\theta,h)/2) =   k(h) \sn\left(\frac{2}{\pi} \bK(k(h)) \left(\theta+  \frac{\pi}{2}\right),k(h)\right) ,$$
which yields 
$$x(\theta,h) =  2 \arcsin \left( k(h) \sn\left(\frac{2}{\pi} \bK(k(h)) \left(\theta+  \frac{\pi}{2}\right),k(h)\right) \right) . $$
Moreover, we have 
\begin{multline*}
v(\theta,h) =  \epsilon_* \sqrt{2(h + M_0 \cos (x(\theta,h))} 
= \epsilon_* \sqrt{2(h+M_0)} \sqrt{1 - \frac{1}{k(h)^2} \sin^2(x(\theta,h)/2)}  \\
= \epsilon_* \sqrt{2(h+M_0)} \sqrt{1 - \sin^2(\Theta(x(\theta,h),h)}
 =\epsilon_*  2k(h) \sqrt{M_0}\, \cn \left(\frac{2}{\pi} \bK(k(h)) \left(\theta+ \epsilon_* \frac{\pi}{2}\right),k(h)\right),
\end{multline*}
and using the second formula of \eqref{trigo2}, it yields 
$$v(\theta,h)= 2k(h) \sqrt{M_0}\, \cn \left(\frac{2}{\pi} \bK(k(h)) \left(\theta+  \frac{\pi}{2}\right),k(h)\right),$$
and concludes the proof.
\end{proof}

\begin{proposition}
\label{prop79}
The function $\omega_{\circ}(h)$, $x(\theta,h)$ and $v(\theta,h)$ are analytic for $\theta \in (-\pi,\pi)$ and $h \in (-M_0,M_0).$\\
The function $\omega_{\circ}$ exhibits the following asymptotic behavior
\begin{equation}
\label{prop79-1}
\begin{split}
&\omega_{\circ}(h) \sim \frac{\pi\sqrt{M_0}  }{ (- \log( M_0 - h)) }\quad \mbox{when}\quad h \to M_0^-,\\
&\omega_{\circ}(h) =  \sqrt{M_0} - \frac{1}{8\sqrt{M_0}} (h + M_0) \quad \mbox{when} \quad h \to -M_0,
\end{split}
\end{equation}
and there exists constants, $C_r$, $\omega_r$ with $\omega_1 = \frac{1}{8\sqrt{M_0}}$ and $\alpha_r \neq 0$ such that for all $r \geq 1$, 
\begin{equation}
\label{prop79-2}
\begin{split}
&\left\| (h+M_0)^{-1}  \Big[ \partial_h^r \omega_{\circ}(h) - \omega_r\Big]\right\|_{L^\infty(-M_0,0)} \leq C_r, \\ 
&\left\| \log(h-M_{0})^{2}\left[\log(h-M_{0})^{2}(h - M_0)^{ r} \partial_h^r \omega_{\circ}(h) - \alpha_r\right]\right\|_{L^\infty(0,M_0)} \leq C_r. 
\end{split}
\end{equation}
The change of variable $(h,\theta) \mapsto (x,v)$ satisfies the following estimates: When  $h \to -M_0$ it converges towards the action-angle variable of the harmonic oscillator $\frac{1}{2} ( v^2 + M_0 x^2)$ in the sense that
\begin{equation}
\label{prop79-3}
\begin{split}
&x(\theta,h) \sim  2\sqrt{\frac{h+M_0}{2M_0}}\cos(\theta) \quad \mbox{when}\quad h \to -M_0,\\
&v(\theta,h) \sim  - 2 \sqrt{M_0}\sqrt{\frac{h+M_0}{2M_0}}\sin(\theta) \quad \mbox{when}\quad h \to -M_0. 
\end{split}
\end{equation}
More precisely, for $r,s \geq 0$ there exist constants $C_{r,s}$ such that
\begin{equation}
\label{prop79-4}
\begin{split}
&\left\| (h+M_0)^{r-\frac{1}{2}}\partial_h^{r}\partial_{\theta}^s \Big[ x(\theta,h) - 2\sqrt{\frac{h+M_0}{2M_0}}\cos(\theta) \Big]\right\|_{L^{^\infty} ( ( -\pi,\pi) \times (-M_0,0))} \leq C_{r,s},\\
&\left\| (h+M_0)^{r-\frac{1}{2}}\partial_h^{r}\partial_{\theta}^s \Big[ v(\theta,h) +  2 \sqrt{M_0}\sqrt{\frac{h+M_0}{2M_0}}\sin(\theta) \Big]\right\|_{L^{^\infty} ( ( -\pi,\pi) \times (-M_0,0))} \leq C_{r,s}. 
\end{split}
\end{equation}
Finally, we have for $r,s \geq 1,$ 
\begin{equation}
\label{prop79-5}
\begin{split}
&\Norm{ |(M_0 -h)^{r}|\log(M_0 -h)|^{-s+2}\partial_h^{r}\partial_{\theta}^s( x(\theta,h)  )}{L^{^\infty} ( ( -\pi,\pi) \times (0,M_0)} \leq C_{r,s}\quad\mbox{and}\\
&\Norm{ |(M_0 -h)^{r}|\log(M_0 -h)|^{-s + 3}\partial_h^{r}\partial_{\theta}^s( v(\theta,h))}{L^{^\infty} ( ( -\pi,\pi) \times (0,M_0)} \leq C_{r,s}
\end{split}
\end{equation}
for some constants $C_{r,s}$, and 
$$\Norm{|\log(M_0 -h)|^{-s} \partial_\theta^s (x(\theta,h))}{L^{^\infty} ( ( -\pi,\pi) \times (0,M_0)} \leq C_s$$
for $s \geq 1$ and some constant $C_s$. 
\end{proposition}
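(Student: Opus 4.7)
The plan is to mirror the proof of Proposition \ref{prop73}, using the explicit formulas from Proposition \ref{prop78} together with the estimates of Proposition \ref{propEKq} and the Fourier expansions \eqref{amuk}--\eqref{Milne} of the Jacobi elliptic functions, with the main new element being the treatment of the region $h \to -M_0$ where the pendulum degenerates to a harmonic oscillator.

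First I would handle $\omega_\circ(h) = \pi\sqrt{M_0}/(2\bK(k(h)))$. Since $k(h)^2 = (h+M_0)/(2M_0)$ is an affine function of $h$ and $\bK(k)$ is analytic in $k^2$ near $0$ with $\bK(0) = \pi/2$ and $\bK''(0) \neq 0$, I would write $\omega_\circ(h) = \sqrt{M_0}\,\Omega_0(k(h)^2)$ for some function $\Omega_0$ analytic near $0$ with $\Omega_0(0) = 1$ and $\Omega_0'(0) = -1/4$; this yields both the second asymptotic in \eqref{prop79-1} and the first bound in \eqref{prop79-2} by direct differentiation. For the limit $h \to M_0^-$, I would plug the logarithmic expansion \eqref{rachm1} of $\bK$ into the definition; since $1 - k(h) \sim (M_0-h)/(4M_0)$, this produces the first asymptotic in \eqref{prop79-1}, while the bound in the second line of \eqref{prop79-2} follows from the estimate on $\partial_k^r(1/\bK)$ in \eqref{rachm} and the Faà di Bruno formula.

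Next I would treat $x(\theta,h)$ and $v(\theta,h)$. Near $h = -M_0$, I use the Fourier series \eqref{amuk}: since $q(k) \sim k^2/16$ as $k \to 0$, only the first mode survives at leading order, giving
\[
\sn\!\left(\tfrac{2\bK(k(h))}{\pi}(\theta+\tfrac{\pi}{2}),k(h)\right) = \cos\theta + O(k(h)^2), \qquad \cn(\ldots) = -\sin\theta + O(k(h)^2),
\]
with all tail terms controlled by $\sum_{m\geq 2} q(k(h))^{m-1/2}$. Substituting into \eqref{220}--\eqref{221}, expanding $\arcsin(z) = z + z^3/6 + \ldots$, and using $k(h) = \sqrt{(h+M_0)/(2M_0)}$ yields the leading harmonic-oscillator behavior \eqref{prop79-3}. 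For the derivative bounds \eqref{prop79-4}, I differentiate the series termwise: each $\partial_h$ on $q(k(h))$, $k(h)$, or $\bK(k(h))$ remains uniformly controlled for $k$ near $0$ except through the factor $\partial_h k(h) = O((h+M_0)^{-1/2})$, which is responsible for the exponent $r - \tfrac{1}{2}$; and the subtraction of the leading harmonic part cancels exactly the terms that would prevent absorption of a factor $(h+M_0)^{1/2}$. Near $h = M_0^-$, the situation is identical to the one in Proposition \ref{prop73} at the separatrix: I introduce an analogue of the auxiliary function $R(\theta,q)$ used there and invoke the logarithmic bounds of Proposition \ref{propEKq} on $q(k(h))$, $\partial_h^r q(k(h)) = O((M_0-h)^{-r}|\log(M_0-h)|^{-2})$, together with the bound $\|\partial_q^r\partial_\theta^s R\|_{L^\infty(1/2,1)} \leq C(1-q)^{-(r+s+1)}$; then Faà di Bruno produces \eqref{prop79-5} exactly as in the proof of \eqref{prop73-5}.

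The main obstacle I expect is the bookkeeping near $h = -M_0$, where the square-root singularity $k(h) = \sqrt{(h+M_0)/(2M_0)}$ appears both as a prefactor of $\sn$ in the argument of $\arcsin$ and inside $\sn$ itself via the dependence on $k$. To obtain the clean exponent $r - \tfrac{1}{2}$ in \eqref{prop79-4}, I must verify that after subtracting the leading harmonic-oscillator contributions $2k(h)\cos\theta$ and $-2k(h)\sqrt{M_0}\sin\theta$, every remaining term carries at least one extra power of $k(h)^2 = (h+M_0)/(2M_0)$, so that one half-power of $(h+M_0)$ is available to absorb each differentiation of $k(h)$. This is accomplished by writing the arcsin Taylor expansion as $x(\theta,h) = 2k(h)\,\Psi(\theta,h,k(h)^2)$ with $\Psi$ an explicit function analytic in its last argument near $0$, and similarly $v(\theta,h) = -2\sqrt{M_0}\,k(h)\,\Phi(\theta,h,k(h)^2)$ with $\Phi(\theta,h,0) = \sin\theta$; the analyticity of $\Psi$ and $\Phi$ in $k^2$ near $0$ then allows one to apply Faà di Bruno cleanly. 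Analyticity of $\omega_\circ$, $x$, and $v$ on $(-\pi,\pi)\times(-M_0,M_0)$ is inherited from analyticity of $\bK$, $\sn$, $\cn$, $\arcsin$ on the relevant domains.
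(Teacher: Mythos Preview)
Your proposal is correct and follows essentially the same route as the paper: reduce $\omega_\circ$ to the analyticity and logarithmic asymptotics of $\bK$ via Proposition~\ref{propEKq}, expand $x$ and $v$ through the Fourier series \eqref{amuk} of $\sn$ and $\cn$, and near the separatrix re-run the auxiliary-$R$ argument from Proposition~\ref{prop73}. One point you gloss over: near $h=M_0^-$ the situation is not quite ``identical'' to Proposition~\ref{prop73}, because here $x=2\arcsin(k(h)\,\sn(\cdot))$ and the argument of $\arcsin$ can reach $\pm1$ (at the turning points $\pm x_0(h)$), introducing an extra $\sqrt{M_0-h}$-type singularity; the paper notes this explicitly and observes that it is dominated by the logarithmic singularities coming from $q$ and $1/\bK$, so the bounds \eqref{prop79-5} still hold.
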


\begin{proof}
Let us first prove \eqref{prop79-1} and \eqref{prop79-2}, starting with the study of $\omega_{\circ}$ when $h\to M_{0}^{-}.$
We have, by using \eqref{rachm1}, that
$$\omega_{\circ}(h) = \frac{\pi\sqrt{M_0}  }{2  \bK(k(h))} \sim \frac{\pi\sqrt{M_0}  }{ (- \log( 1 - k(h))) } $$
and we obtain the result using 
$$1 - k(h) = 1 - \sqrt{1 + \frac{h - M_0}{2M_0}} \sim \frac{M_0 - h}{4M_0}.  $$
This proves the first part of \eqref{prop79-1}. Note that the second estimate of \eqref{prop79-2} follows from the estimate on the function $1/\bK$ of \eqref{rachm}, and the smoothness of $k(h)$ in the vicinity $h\sim M_{0}.$\\
When $h \to -M_0,$ $\omega_{\circ}(h)$ is an analytic function of $k(h)^2 = \frac{h+M_0}{2M_0},$ and we have using \eqref{rachm0}
\begin{eqnarray*}
\omega_{\circ}(h) = \frac{\pi\sqrt{M_0}  }{2  \bK(k(h))}  &=& \sqrt{M_0} - \frac{\sqrt{M_0}}{4} k(h)^2 +  \mathcal{O}((h+M_0)^2) \\
&=& \sqrt{M_0}  - \frac{1}{8\sqrt{M_0}} (h + M_0) +  \mathcal{O}((h+M_0)^2). 
\end{eqnarray*}
The first estimate of \eqref{prop79-2} follows easily.

Let us now study the functions $x(\theta,h)$ and $v(\theta,h).$ Using \eqref{amuk} and the expression \eqref{220} and \eqref{221} of $x(\theta,h)$ and $v(\theta,h)$, we obtain the expansions
$$v(\theta,h) =   \sqrt{M_0} \frac{4\pi}{ \bK(k(h))}\sum_{m = 0}^{\infty} \frac{q(k(h))^{m + \frac{1}{2}}}{1 + q(k(h))^{2m + 1}} \cos\left( (2 m +1) \left(\theta +  \frac{\pi}{2}\right)\right) $$
and
\begin{multline*}
\sin(x(\theta,h)/2)=k(h) \sn\left(\frac{2}{\pi} \bK(k(h)) \left(\theta +  \frac{\pi}{2}\right) ,k(h)\right) \\
= \frac{2\pi}{ \bK(k(h))}\sum_{m = 0}^{\infty} \frac{q(k(h))^{m + \frac{1}{2}}}{1 - q(k(h))^{2m + 1}} \sin\left( (2 m +1) \left(\theta +  \frac{\pi}{2}\right)\right).
\end{multline*}
This, together with the fact that $\arcsin(z) \sim z$ is analytic in the vicinity of $z = 0$ and the expansions \eqref{rachm0}, shows that $v(\theta,h)$ and $x(\theta,h)$ are analytic functions of $\sqrt{h + M_0}$ when $h \to -M_0$, and that 
$$v(\theta,h) \sim 8 \sqrt{q(k(h))} 
\sqrt{M_0} \cos\left(\theta +  \frac{\pi}{2}\right) \sim - 2 k(h)\sqrt{M_0} \sin(\theta)$$
and
$$x(\theta,h) \sim 8 \sqrt{q(k(h))} \sin\left(\theta+  \frac{\pi}{2}\right) \sim  2 k(h) \cos(\theta)$$
which yields asymptotics \eqref{prop79-3}, and estimates \eqref{prop79-4} follow easily.\\
It remains to prove \eqref{prop79-5}. The analysis is similar to what we did for $U_{+}$ and $U_{-},$ as we have 
$$v(\theta,h) =  \sqrt{M_0} \frac{4\pi}{ \bK(k(h))} R(\theta,q(k(h)))$$
with 
$$R(\theta,q) = \sum_{m = 0}^{\infty} \frac{q^{m + \frac{1}{2}}}{1 + q^{2m + 1}} \cos\left( (2 m +1) \left(\theta +  \frac{\pi}{2}\right)\right).$$
By doing an analysis similar to the one performed for  $U_+$ and $U_-$, we have 
$$\Norm{\partial_q^r \partial_\theta^s R(\theta,q(k(h)))}{L^{\infty}(0,M_0)} \leq C \log\left( \frac{1}{ M_0 - h}\right) ^{r + s +1}, $$
Moreover \eqref{rachm} shows that
$$\partial_h^r q(k(h)) = \mathcal{O}\left(\left(\frac{1}{M_0 -h}\right)^{r} \frac{1}{\log (M_0 -h)^{2}}\right), $$
and 
$$\partial_h^r \frac{1}{\bK(k(h))} = \mathcal{O}\left(\left(\frac{1}{M_0 -h}\right)^{r} \frac{1}{\log (M_0 -h)^{2}}\right). $$
We deduce from these estimates that we have the same asymptotics as in the case of $U_+$ and $U_-$: 
$$\partial_h^r \partial_{\theta}^s v(\theta,h)  = \mathcal{O} \left( \left(\frac{1}{h-M_0}\right)^{r} \log \left(\frac{1}{h - M_0}\right)^{s-3}\right).$$
Now we can perform a similar analysis for
\begin{multline*}
\sin(x(\theta,h)/2)=k(h) \sn\left(\frac{2}{\pi} \bK(k(h)) \left(\theta +  \frac{\pi}{2}\right) ,k(h)\right) \\
= \frac{2\pi}{ \bK(k(h))}\sum_{m = 0}^{\infty} \frac{q(k(h))^{m + \frac{1}{2}}}{1 - q(k(h))^{2m + 1}} \sin\left( (2 m +1) \left(\theta +  \frac{\pi}{2}\right)\right)
\end{multline*}
after noticing that 
$$\frac{1}{1 - q^{2m+1}} = \frac{1}{1 - q} \left(\frac{1 - q}{1 - q^{2m+1}}\right) \leq \frac{C}{2 m +1}\left(\frac{1}{1 -q}\right)$$
when $q \in (\frac12,1)$. To obtain the conclusion for $x(\theta,h)$, we just have to be careful as $\arcsin$ has singularities in $\pm 1$:
recall that we have the expansion (see (4.4.41) in \cite{abramo}) 
$$\arcsin(x) = - \pi/2 + \sqrt{2(1+x)} \left(\sum_{n = 0}^{+\infty} \frac{(2n)!}{8^n (2n+1)(n!)^2} (1+x)^n \right)$$
and
$$\arcsin(x) = \pi/2 - \sqrt{2(1-x)} \left(\sum_{n = 0}^{+\infty} \frac{(2n)!}{8^n (2n+1) (n!)^2} (1-x)^n \right). $$
In our context, it will happen at the point $\pm x_0(h)$ and the singularity will be of order $\sqrt{M_0 - h}$. However, this singularity is weaker than the other one coming from functions $q(z)$ and $\bK(z)$ in the vicinity $z\sim 1$ (see \eqref{rachm1}). This finishes the proof of \eqref{prop79-5}.
\end{proof}
 The Fourier expansion of cosine and sine functions are given by the following result: 

\begin{proposition}
\label{prop710}
For $(\theta,a)\in  (-\pi,\pi)\times J_\circ,$ 
$$\cos(x(\theta,a)) = \sum_{\ell \in \Z} C_\ell^\circ(a) e^{i\ell \theta}
\quad \mbox{and} \quad 
\sin(x(\theta,a)) = \sum_{\ell \in \Z} S_\ell^\circ(a) e^{i\ell \theta}$$
with, in terms of the variable $h$, 
\begin{equation}
\label{coeffouriercos2}
\begin{split}
&C^\circ_0(h) = - 1 +  2\frac{\bE(k(h))}{\bK(k(h))} \\
&C^\circ_{2\ell}(h) = (-1)^{|\ell|} \frac{2\pi^2  }{ \bK(k(h))^2} \left( \frac{ |\ell| q(k(h))^{|\ell|}}{1 - q(k(h))^{2|\ell|}} \right), \quad \ell \neq 0\\
&C^\circ_{2\ell +1} = 0,
\end{split}
\end{equation}
and
\begin{equation}
\label{coeffouriersin2}
\begin{split}
&S^\circ_{2\ell}(h) = 0 \\
&S^\circ_{2\ell - 1}(h) = S^\circ_{-(2\ell - 1)}(h)=\frac{(-1)^{\ell -1}\pi^2}{2 \bK(k(h))^2}  \left(\frac{(2\ell-1) q(k(h))^{\ell-\frac{1}{2}}}{1 - q(k(h))^{2\ell-1}}\right), \quad \ell\geq 1. 
\end{split}
\end{equation}
\end{proposition}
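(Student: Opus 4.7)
The plan is to derive both Fourier expansions directly from the explicit formula \eqref{220} for $x(\theta,h)$ given in Proposition \ref{prop78}, combined with the Fourier series of squares and products of Jacobi elliptic functions already collected in \eqref{Milne}. No new special-function identities are required.

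Writing $k=k(h)$ for brevity and introducing the auxiliary variable
$u = \tfrac{2}{\pi}\bK(k)\bigl(\theta+\tfrac{\pi}{2}\bigr),$
formula \eqref{220} reads $\sin(x(\theta,h)/2)=k\,\sn(u,k)$, and therefore also $\cos(x(\theta,h)/2)=\sqrt{1-k^2\sn^2(u,k)}=\dn(u,k)$ by the definition of the Jacobi function $\dn$. The two key identities for the change of argument are
$\tfrac{\pi u}{\bK(k)}=2\theta+\pi$ and $\tfrac{\pi u}{2\bK(k)}=\theta+\tfrac{\pi}{2}.$

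For $\cos(x)$, I would apply the double-angle identity $\cos(x)=1-2\sin^2(x/2)=1-2k^2\sn^2(u,k)$ and substitute the first formula of \eqref{Milne}. The constant parts combine to $-1+2\bE(k)/\bK(k)$, which is exactly $C_0^\circ$. For the oscillatory part one has $\cos(m\pi u/\bK(k))=\cos(m(2\theta+\pi))=(-1)^m\cos(2m\theta)$, so only even Fourier indices appear. Expanding $\cos(2m\theta)=\tfrac12(e^{i2m\theta}+e^{-i2m\theta})$ and relabelling $\ell=m$ gives the stated formula for $C_{2\ell}^\circ$, together with $C_{2\ell+1}^\circ=0$.

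For $\sin(x)$, I would use $\sin(x)=2\sin(x/2)\cos(x/2)=2k\,\sn(u,k)\,\dn(u,k)$ and substitute the third formula of \eqref{Milne}; the factor $k$ cancels the $1/k$ in the prefactor. The trigonometric argument becomes $(2m-1)(\theta+\pi/2)$, and the identity $\sin\bigl((2m-1)(\theta+\pi/2)\bigr)=(-1)^{m-1}\cos((2m-1)\theta)$ shows that only odd Fourier modes appear. This yields $S_{2\ell}^\circ=0$, and writing $\cos((2\ell-1)\theta)$ in exponential form gives the symmetric formula for $S_{\pm(2\ell-1)}^\circ$.

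I do not anticipate any serious obstacle; the argument is a direct substitution and book-keeping exercise, once the algebraic identities between $\sin(x/2)$, $\cos(x/2)$ and the Jacobi functions are in place. The only place where care is required is the phase shift by $\pi/2$ in the variable $u$, which simultaneously produces the parity structure of the expansions (even modes only for $\cos(x)$, odd modes only for $\sin(x)$) and the alternating signs $(-1)^\ell$, $(-1)^{\ell-1}$ in the coefficients.
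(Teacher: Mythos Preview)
Your proposal is correct and follows essentially the same route as the paper: for $\cos(x)$ you use $\cos(x)=1-2k^2\sn^2(u,k)$ together with the first formula of \eqref{Milne}, and for $\sin(x)$ you use $\sin(x)=2k\,\sn(u,k)\,\dn(u,k)$ together with the third formula of \eqref{Milne}, then read off the Fourier coefficients from the phase shift $u=\tfrac{2}{\pi}\bK(k)(\theta+\tfrac{\pi}{2})$. The paper does exactly this (reaching $1-2k^2\sn^2$ for the cosine via $\cos(2\arcsin(\cdot))$ rather than the half-angle identity, which is the same computation), and your explicit justification that $\cos(x/2)=\dn(u,k)$ and your tracking of the parity/sign structure through the $\pi/2$ shift are in fact slightly more detailed than what the paper writes.
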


\begin{proof}
Recall that we have
$$\sin(x(\theta,h)/2) =  k(h) \sn\left(\frac{2}{\pi} \bK(k(h)) \left(\theta+  \frac{\pi}{2}\right),k(h)\right).$$ 
By using the third formula of \eqref{classictrigo}, the definition of the function $\dn(u,k)$ and the expansion of $\sn(u,k)\dn(u,k)$ in \eqref{Milne}, we have
\begin{eqnarray*}
\sin(x(\theta,h)) &=&2\sin(x(\theta,h)/2)\sqrt{1-\sin^{2}(x(\theta,h)/2)}\\
&=& 2 k(h) \sn \left(\frac{2}{\pi} \bK(k(h)) \left(\theta+  \frac{\pi}{2}\right),k(h)\right) \dn \left(\frac{2}{\pi} \bK(k(h)) \left(\theta+  \frac{\pi}{2}\right),k(h)\right)\\
&=&   \frac{\pi^2}{ \bK(k(h))^2} \sum_{m = 1}^\infty \frac{(2 m - 1) q(k(h))^{m - \frac{1}{2}}}{1 + q(k(h))^{2m - 1}} \sin\left( (2 m - 1)  \left(\theta+  \frac{\pi}{2}\right)\right), 
\end{eqnarray*}
which yields \eqref{coeffouriersin2}.\\
Using now the first and fourth formulae of \eqref{classictrigo}, and the expansion of $\sn^{2}(u,k)$ from \eqref{Milne}, we obtain
\begin{eqnarray*}
\cos(x(\theta,h)) &=& \cos\left(2\arcsin\left(k(h) \sn\left(\frac{2}{\pi} \bK(k(h)) \left(\theta+  \frac{\pi}{2}\right),k(h)\right)\right)\right) \\
&=&2 \cos^{2}\left(\arcsin\left(k(h) \sn\left(\frac{2}{\pi} \bK(k(h)) \left(\theta+  \frac{\pi}{2}\right),k(h)\right)\right)\right) -1 \\
&=&1 - 2k(h)^2 \sn^2\left(\frac{2}{\pi} \bK(k(h)) \left(\theta+  \frac{\pi}{2}\right),k(h)\right) \\
&=& 1 - 2   \frac{\bK(k(h)) - \bE(k(h))}{ \bK(k(h))}  + \frac{4\pi^2 }{ \bK(k(h))^2} \sum_{m = 1}^\infty \frac{m q(k(h))^{m}}{1 - q(k(h))^{2m}} \cos( 2 m \theta +   m \pi),
\end{eqnarray*}
which yields \eqref{coeffouriercos2}.
\end{proof}

As in the case of $U_{\pm}$ we can establish estimates for general Fourier coefficients $f^{\circ}$ as follows:

\begin{proposition}
\label{prop711}
Assume that $f$ is a function satisfying 
$$
\max_{|\alpha |\leq m} \Norm{\partial_{x,v}^\alpha  f}{L^{\infty}(U_\circ)}\leq C_{m}
$$
for some $m\geq p+ 2,$ with $p$ defined by 
$$ p= \max \{n \geq 1  ,  \, \partial^\alpha_{x,v} f(0,0) = 0, \, \forall \alpha, \, 1 \leq |\alpha | \leq n\}$$
(and with the convention that $p=0$ if this set is empty).
Then, as long as $r+s\leq m$ and $s+p+2\leq m,$ we have that for $\ell \neq 0$,
\begin{equation}
\begin{split}
&\partial_h^r f_\ell^{\circ}(h) = \mathcal{O} \left( \frac{1}{|\ell|^s }\Big(\frac{1}{h-M_0}\Big)^{r} \log \Big(\frac{1}{h - M_0}\Big)^{s}\right) \quad\mbox{when}\quad h \to M_0^-,
\end{split}
\end{equation}
and that 
\begin{equation}
\label{gla1}
f_{\ell}^{\circ}(h) = { 1 \over |\ell|^s}\left( c_{\ell}\,(h+ M_{0})^{p+1 \over 2} + (h+ M_{0})^{p+2 \over 2}  r_{\ell}(\sqrt{h+ M_{0}})\right),
\end{equation}
where $c_{\ell}$ is a number uniformly bounded in $\ell$ and $r_{\ell} \in W^{m-(p+2)- s,\infty}$ uniformly in $\ell.$
\end{proposition}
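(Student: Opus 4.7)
The proof splits into the two asymptotic regimes, and only the behaviour near the eye requires genuinely new input beyond the $U_\pm$ analysis.

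For the bound near the separatrix ($h \to M_0^-$), I would adapt the proof of Proposition \ref{prop75} essentially verbatim. After $s$ integrations by parts in $\theta$ in
\[
f_\ell^\circ(h) = \frac{1}{2\pi}\int_{-\pi}^\pi f(x(\theta,h),v(\theta,h))\, e^{-i\ell\theta}\,d\theta,
\]
differentiating $r$ times in $h$ and applying the Fa\`a di Bruno formula, combined with the derivative estimates \eqref{prop79-5} from Proposition \ref{prop79} (which are structurally identical to \eqref{prop73-5}), yields the announced logarithmic bound. The condition $r+s \leq m$ is exactly what is required to carry out these manipulations.

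For the refined expansion \eqref{gla1} near the eye ($h \to -M_0$), the key structural input, also from Proposition \ref{prop79}, is that on setting $\rho := \sqrt{h+M_0}$ the change of variable becomes analytic in $\rho$:
\[
x(\theta,h) = \rho X_0(\theta) + \rho^2 X_1(\theta,\rho), \qquad v(\theta,h) = \rho V_0(\theta) + \rho^2 V_1(\theta,\rho),
\]
with $X_0(\theta) = \sqrt{2/M_0}\cos\theta$, $V_0(\theta) = -\sqrt{2}\sin\theta$, and $X_1,V_1$ smooth. I would then Taylor-expand $f$ at $(0,0)$; using the vanishing hypothesis on $\partial^\alpha f(0,0)$ for $1\leq|\alpha|\leq p$, this gives $f(x,v) = f(0,0) + T_{p+1}(x,v) + R(x,v)$, where $T_{p+1}$ is the homogeneous polynomial of degree $p+1$ built from $\partial^\alpha f(0,0)$, $|\alpha|=p+1$, and $R$ vanishes to order $p+2$ at the origin. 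Substituting and performing a Fa\`a di Bruno expansion in $\rho$ yields
\[
g(\theta,\rho) := f(x(\theta,h),v(\theta,h)) = f(0,0) + \rho^{p+1} Q(\theta) + \rho^{p+2} S(\theta,\rho),
\]
where $Q(\theta) := T_{p+1}(X_0(\theta),V_0(\theta))$ is a trigonometric polynomial of degree exactly $p+1$ and $S \in W^{m-(p+2),\infty}$ in $(\theta,\rho)$. Taking the $\ell$-th Fourier coefficient ($\ell \neq 0$) annihilates $f(0,0)$ and leaves $f_\ell^\circ(h) = \rho^{p+1}Q_\ell + \rho^{p+2} S_\ell(\rho)$. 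Since $Q$ has degree exactly $p+1$, $Q_\ell = 0$ for $|\ell|>p+1$, and $c_\ell := |\ell|^s Q_\ell$ is uniformly bounded. Integrating by parts $s$ times in $\theta$ in the integral defining $S_\ell(\rho)$ yields the $|\ell|^{-s}$ factor, and $r_\ell(\rho) := |\ell|^s S_\ell(\rho)$ satisfies $|\partial_\rho^j r_\ell(\rho)| \leq \|\partial_\rho^j\partial_\theta^s S\|_{L^\infty}$ for $0 \leq j \leq m-(p+2)-s$, uniformly in $\ell$, completing the decomposition \eqref{gla1}.

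The delicate technical point is the factorisation $g(\theta,\rho) - f(0,0) - \rho^{p+1}Q(\theta) = \rho^{p+2} S(\theta,\rho)$ with $S$ genuinely smooth down to $\rho=0$, rather than merely bounded. Dividing the Taylor remainder of $f$ by $(|x|^2+|v|^2)^{(p+2)/2}$ would in general not produce a smooth function of the Cartesian variables; the point is that we divide instead by $\rho^{p+2}$ after substitution and that $(x(\theta,\rho),v(\theta,\rho))$ is analytic in $\rho$ (not just in $h$), which is precisely the reason for phrasing the statement in terms of $\sqrt{h+M_0}$. Writing the Taylor remainder in integral form
\[
R(x,v) = (p+2)\!\!\!\sum_{|\alpha|=p+2}\frac{x^{\alpha_1}v^{\alpha_2}}{\alpha!}\int_0^1(1-t)^{p+1}\partial^\alpha f(tx,tv)\,dt
\]
and substituting the $\rho$-analytic expansions of $x,v$ makes the $\rho^{p+2}$ factor explicit; the remaining integrand has bounded $\rho$- and $\theta$-derivatives up to total order $m-(p+2)$ thanks to $f \in W^{m,\infty}$ and the smoothness of $(x(\theta,\rho),v(\theta,\rho))$ in both variables.
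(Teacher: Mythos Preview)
Your proposal is correct and follows essentially the same route as the paper: reduce the separatrix case to the $U_\pm$ argument via the estimates \eqref{prop79-5}, and near $h=-M_0$ Taylor-expand $f$ at the origin using the vanishing hypothesis, substitute the $\sqrt{h+M_0}$-analytic expansion of $(x,v)$ from Proposition~\ref{prop79}, and integrate by parts $s$ times in $\theta$. Your write-up is in fact more explicit than the paper's (which is quite terse here), in particular through the integral-remainder argument showing that $S$ is genuinely $W^{m-(p+2),\infty}$ down to $\rho=0$; one small slip is that $Q(\theta)=T_{p+1}(X_0(\theta),V_0(\theta))$ has degree \emph{at most} $p+1$ rather than exactly $p+1$, but only the upper bound is used.
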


\begin{proof} 
The estimates near the separatix are exactly the same as in the case of $U_+$ and $U_-$. Let us then focus on the asymptotic near $h = -M_0$. Taylor-expanding, we can always write that
$$f(x,v) = f(0,0) + F^{1}(x,v) \cdot (x,v)= f(0,0) + F^{1}(0,0) \cdot (x,v) + F^{2}(x,v) \cdot (x,v)^{(2)}$$
where $F^{1}(x,v)$ is linear and $F^{2}(x,v)$ is bilinear. We may write for $\ell \neq 0$ that
\begin{equation}
\label{gla1p1}
f_{\ell}^{\circ}(a)= {1 \over 2 \pi} \int_{(-\pi,\pi)} \left[ F^{1}(0,0) \cdot (x,v) + F^{2}(x(\theta, a), v(\theta, a) ) \cdot   (x(\theta, a), v(\theta, a) )^{(2)}\right] e^{-i \ell \theta} \, \dd \theta.
\end{equation}
We obtain then from Proposition \ref{prop79} that $U(\theta, a)= (x(\theta, a), v(\theta, a))$ can be expanded  when $h$ is near $-M_{0}$ as 
$$ U(\theta, a) = \sum_{n \geq 1} a_{n}(\theta) (h+ M_{0})^{n \over 2},$$
where the functions $a_{n}$ are smooth since $(x(\theta, a), v(\theta, a))$ is an analytic function of $\sqrt{h+ M_{0}}$ uniformly in $\theta.$ By plugging this expansion in \eqref{gla1p1} and by  integrating by parts $s$ times we get \eqref{gla1}  for $p=0.$ If  $p>0$, we just notice that by further Taylor expansion, we have
$$f(x,v) = f(0,0) + F^{p+1}(0,0) \cdot (x,v)^{(p)} + F^{p+2}(x,v) (x,v)^{(p+1)},$$ 
where $F^{p+1}(x,v)$ is $p+1$-linear and $F^{p+2}$ $p+2$ linear. It suffices then to proceed as above.
\end{proof}

From this result, we obtain the following: 

\begin{proposition}
\label{decaydansloeil}
Assume that $f$ an $\varphi$ are real functions satisfying 
$$\max_{|\alpha |  \leq m} \Norm{\partial_{x,v}^\alpha  f}{L^{\infty}(U_\circ)}\leq C_{m}\quad\mbox{and}\quad\max_{|\alpha |  \leq M} \Norm{\partial_{x,v}^\alpha  \varphi}{L^{\infty}(U_\circ)}\leq C_{r,s}$$
for some $m\geq 0.$ Let $p$ and $q$ defined as
\begin{eqnarray*} &&p= \max \{n \geq 1  ,  \, \partial^\alpha_{x,v} f(0,0) = 0, \, \forall \alpha, \, 1 \leq |\alpha | \leq n\}, \\
&&q= \max \{n \geq 1  ,  \, \partial^\alpha_{x,v} \varphi(0,0) = 0, \, \forall \alpha, \, 1 \leq |\alpha | \leq n\}.
\end{eqnarray*}
Then, for $m \geq  5 + p  + { p+ q \over 2} $, $M\geq 7 + q +  { p+ q \over 2 }$, $M \geq m+2$,  we have for $t \geq 1$
$$\left|\int_{U_\circ} f(x,v) \varphi(\psi_{t}(x,v)) \dd v -  \int_{-M_0}^{M_0} f_0^{\circ}(h) \varphi_0^{\circ}(h) \frac{1}{\omega_{\circ}(h)} \dd h \right|  \leq \frac{C}{(1 + t)^{  \frac{p+q}{2}  +2 } }.$$
\end{proposition}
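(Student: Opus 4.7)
The plan is to begin, as in Proposition \ref{decaydehors}, with the action-angle identity
$$\int_{U_\circ} f(x,v)\,\varphi(\psi_t(x,v))\,\dd x\,\dd v \;=\; \sum_{\ell\in\Z}\int_{-M_0}^{M_0} f_\ell^\circ(h)\,\varphi_{-\ell}^\circ(h)\, e^{it\ell\omega_\circ(h)}\,\frac{\dd h}{\omega_\circ(h)},$$
isolating the $\ell=0$ term (which produces the claimed limit) and controlling each $\ell\neq 0$ summand. Because the obstructions sit at the two ends of $(-M_0,M_0)$ for different reasons, I would introduce a smooth cut-off $\chi_1+\chi_2=1$ with $\chi_1$ supported in $(-M_0,-M_0/2)$ and $\chi_2$ in $(-M_0/2,M_0)$, and analyze the two pieces separately.

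For the $\chi_2$-piece (near the separatrix) I would copy the argument used in Proposition \ref{decaydehors}, using the operator $\mathrm{D}_{\ell,h}^\top G = -\partial_h(G/(i\ell\partial_h\omega_\circ))$. Thanks to Remark \ref{rem79} the function $\partial_h\omega_\circ$ does not vanish on $U_\circ$, and by the second estimate of \eqref{prop79-2} it in fact blows up like $(M_0-h)^{-1}\log(M_0-h)^{-2}$ at $h=M_0^-$. Combined with the logarithmic control of $\partial_h^r f_\ell^\circ$ and $\partial_h^r\varphi_{-\ell}^\circ$ from Proposition \ref{prop711}, the analogue of Lemma \ref{lemrec} shows that iterating $\mathrm{D}_{\ell,h}^\top$ an arbitrary number of times $\beta$ (limited by the regularity of $f$ and $\varphi$) yields an $L^1$-integrable function, with boundary terms at $h=M_0^-$ that vanish because of the $(M_0-h)$ factor produced by $1/\partial_h\omega_\circ$. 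This gives an $O(t^{-\beta}|\ell|^{-\beta})$ bound, and one can take $\beta$ as large as desired (in particular $\beta=(p+q)/2+2$) provided $m,M$ are large enough and we allocate $s$ additional $\theta$-derivatives for the $|\ell|^{-s}$ factor needed for summability.

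For the $\chi_1$-piece (near the center of the eye) the phase $\omega_\circ$ has a stationary point of the transport operator at $h=-M_0$ and the Fourier coefficients degenerate there. The idea is to make the substitution $u=\sqrt{h+M_0}$, so that $\dd h=2u\,\dd u$ and $\omega_\circ(-M_0+u^2)$ is a smooth function of $u$ whose derivative in $u$ is $2u\,\partial_h\omega_\circ(-M_0+u^2)$; by \eqref{prop79-2}, $\partial_h\omega_\circ(-M_0)=-1/(8\sqrt{M_0})\neq 0$, so $u=0$ is a non-degenerate stationary phase point sitting at the boundary. Using the expansion \eqref{gla1} of Proposition \ref{prop711} for both $f_\ell^\circ$ and $\varphi_{-\ell}^\circ$, the integrand in the variable $u$ factors as
$$u\,f_\ell^\circ(-M_0+u^2)\,\varphi_{-\ell}^\circ(-M_0+u^2)\,\omega_\circ(-M_0+u^2)^{-1} \;=\; u^{\,p+q+3}\,\tilde G_\ell(u),$$
with $\tilde G_\ell$ smooth in $u$ and bounded uniformly in $\ell$ (up to a gain $|\ell|^{-s}$ paid by integrating by parts in $\theta$ inside \eqref{coefou}). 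Applying the operator $\tilde{\mathrm D}^\top G=-\partial_u(G/(i\ell\tilde\omega'(u)))$ $N$ times to $u^{p+q+3}\tilde G_\ell(u)$ gives decay $t^{-N}$ at the price of each application costing a factor $u^{-1}$ (through $1/\tilde\omega'(u)\sim 1/u$); choosing $N=\lceil (p+q)/2+2\rceil$ the remaining power of $u$ stays non-negative, so boundary terms at $u=0$ vanish and the resulting integral is uniformly bounded. This yields exactly the rate $t^{-((p+q)/2+2)}/|\ell|^{s}$.

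The main difficulty is the bookkeeping in this second step: each $\tilde{\mathrm D}^\top$ simultaneously differentiates $\tilde G_\ell$ (which costs regularity in $h$ and $\theta$) and introduces an extra $u^{-1}$. One has to check that after $N$ iterations the resulting expression is still $L^1(\dd u)$ with norm summable against $|\ell|^{-s}$; this accounting, combined with the parallel one near the separatrix carried out through $\mathrm{D}_{\ell,h}^\top$, is precisely what produces the thresholds $m\geq 5+p+(p+q)/2$ and $M\geq\max(7+q+(p+q)/2,\,m+2)$ in the statement. Once both cut-off pieces are estimated uniformly in $\ell\neq 0$, summing over $\ell$ and combining with the clean identity for $\ell=0$ gives the announced bound for $t\geq 1$.
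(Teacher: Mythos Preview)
Your global strategy (action-angle expansion, cut-off into a separatrix piece and a center piece, and the treatment of the separatrix piece via iterated $\mathrm D_{\ell,h}^\top$ exactly as in Proposition~\ref{decaydehors}) is the same as in the paper and is fine. The gap is in your analysis near the center.

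After the substitution $u=\sqrt{h+M_0}$, the phase satisfies $\tilde\omega'(u)=2u\,\partial_h\omega_\circ(-M_0+u^2)\sim c\,u$ with $c\neq 0$; in other words $u=0$ is a \emph{genuine} stationary point of the new phase. Consequently each application of $\tilde{\mathrm D}^\top G=-\partial_u\big(G/(i\ell\tilde\omega'(u))\big)$ costs \emph{two} powers of $u$ (one from dividing by $\tilde\omega'\sim u$, one from the $\partial_u$), not one as you write. Starting from $u^{p+q+3}\tilde G_\ell(u)$ you therefore reach order $u^{\,p+q+3-2N}$ after $N$ steps. For $p+q=2k+1$ your choice $N=\lceil (p+q)/2+2\rceil=k+3$ produces a $u^{-2}$ singularity at $u=0$ (and a divergent boundary term at step $k+3$), so the argument as stated only yields $t^{-(k+2)}$, which is $t^{-1/2}$ short of the claimed rate $t^{-((p+q)/2+2)}$. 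In the even case $p+q=2k$ your scheme happens to give the right rate, but for a reason different from the one you state: the transition from $u^1$ to $u^0$ is special (the coefficient $(n-1)$ in Leibniz kills the singular term), not because each step only costs $u^{-1}$.

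The paper avoids this obstruction by \emph{not} passing to $u=\sqrt{h+M_0}$: it stays in the variable $h$, where $\partial_h\omega_\circ$ is smooth and \emph{non-vanishing} near $-M_0$ (Proposition~\ref{prop79}), so each integration by parts costs only one power of $(h+M_0)$. Using the expansion of Proposition~\ref{prop711}, the leading amplitude is $(h+M_0)^{1+(p+q)/2}$; when $p+q$ is even this is a genuine polynomial and one can integrate by parts $k+2$ times directly. When $p+q=2k+1$ one first integrates by parts $k+2$ times down to an integrand $\sim (h+M_0)^{-1/2}$, then changes variable to $u=\omega_\circ(h)-\omega_\circ(-M_0)$ and invokes the elementary bound $\big|\int_0^X u^{-1/2}e^{itu}\,\dd u\big|\lesssim t^{-1/2}$ (Lemma~\ref{integrale}) to recover the missing half-power. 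If you prefer to keep your variable $u=\sqrt{h+M_0}$, the fix is to stop at $N=k+2$ integrations by parts and then apply a boundary stationary-phase (van der Corput) estimate to the remaining integral $\int_0^{u_0} A(u)\chi(u)e^{it\ell\tilde\omega(u)}\dd u$, which is $O((t|\ell|)^{-1/2})$ since $\tilde\omega''(0)\neq 0$; this is essentially equivalent to the paper's Lemma~\ref{integrale}.
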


\begin{proof}
We begin as in Proposition \ref{decaydehors} and write that
$$\int_{U_\circ} f(x,v) \varphi(\psi_t(x,v)) \dd x\dd v  =\sum_{\ell\in \Z} \int_{-M_0}^{M_0}f_\ell^{\circ}(h) \varphi_{-\ell}^{\circ}(h)  e^{i t \ell \omega_{\circ}(h) } \frac{1}{\omega_{\circ}(h)} \dd h.$$ 
By taking a smooth nonnegative  function $\chi (s) $ such that $\chi= 1$ for $s \leq {\delta}$ and $\chi= 0$
for $s \geq 2 \delta $ with $\delta$ small enough, we can split the integral into
\begin{multline*} 
\int_{-M_0}^{M_0}f_\ell^{\circ}(h) \varphi_{-\ell}^{\circ}(h)  e^{i t \ell \omega_{\circ}(h) } \frac{1}{\omega_{\circ}(h)} d h 
 = \int_{-M_0}^{M_0} \chi \left( { h + M_{0} \over M_{0}} \right)f_\ell^{\circ}(h) \varphi_{-\ell}^{\circ}(h)  e^{i t \ell \omega_{\circ}(h) } \frac{1}{\omega_{\circ}(h)} \dd h  
\\  +   \int_{-M_0}^{M_0}  \left(1 -  \chi \left( { h + M_{0} \over M_{0}} \right)\right)f_\ell^{\circ}(h) \varphi_{-\ell}^{\circ}(h)  e^{i t \ell \omega_{\circ}(h) } \frac{1}{\omega_{\circ}(h)} \dd h  = I_{\ell}^1 + I_{\ell}^2.
\end{multline*}  
As in the proof of Proposition \ref{decaydehors}, the idea is again to integrate by parts as long as we can, {\it i.e.} as long as the contributions from the boundary points $h\sim \pm M_{0}$ vanish. The term $I_{\ell}^2$ can be handled as before for $U_+$ and $U_-:$ as $\partial_{h}\omega_{\circ}$ does not vanish, only the contribution at the separatix $h \sim M_0$ matters, and this yields a decay by $(1 + t)^{-r}$ assuming enough regularity. As a matter of fact, we just need to take $m$ large enough in order to choose $r \geq {p+q \over 2} + 2.$\\
We shall now focus on $I_{\ell}^1$ which contains the contribution from the center $h\sim -M_{0}.$ By using Proposition \ref{prop711}, we can expand $I_{\ell}^1$ under the form 
\begin{multline*}
I_{\ell}^1= {1 \over |\ell|^s} \tilde{c}_{\ell} \int_{-M_{0}}^{M_{0}} ( h+ M_{0})^{ 1 + { p+q \over 2 }}   \chi \left( { h + M_{0} \over M_{0}} \right)e^{i t \ell \omega_{\circ}(h) } \frac{1}{\omega_{\circ}(h)} \dd h
\\ +  {1 \over |\ell|^s} \tilde{c}_{\ell} \int_{-M_{0}}^{M_{0}} ( h+ M_{0})^{ 1 + { 1+ p+q \over 2 }}  \chi \left( { h + M_{0} \over M_{0}} \right)     \tilde{r}_{\ell}(\sqrt{h+ M_{0}}) \frac{1}{\omega_{\circ}(h)}  e^{i t \ell \omega_{\circ}(h)} \dd h,
\end{multline*}
where $\tilde{r}_{\ell}$ is uniformly in $\ell$  in $W^{m- p - 2}$ (since we always assume that $M$ is much bigger than $m$, 
$ M \geq m+s$), and where the constant $c_{\ell}$ is uniformly bounded in $\ell.$ It is important to notice that $\omega_{\circ}(h)$ and all its derivatives are non-zero smooth functions in $]-M_{0},c]$ for any $c\in (0,M_{0}),$ so that during the integration by parts, $\omega_{\circ}$ will not play any major part.\\
Let us first consider the case where $p+q$ is even, and write $p+q= 2k.$ Then the polynomial contributions in $(h+M_{0})$ are $ (h+M_{0})^{k+1}$ in the first integral and $(h+M_{0})^{k+1+{1 \over 2}}$ in the second integral above. We can thus integrate by parts $k+2$ times in each of the two integrals
(as in the proof of Proposition \ref{decaydehors}), in order to obtain that 
$$| I_{\ell}^1| \lesssim {1 \over |\ell|^s} { 1 \over (1 + t)^{k+2}}.$$
Taking $s=2$ and summing with respect to $\ell,$ we get the result.\\
In the case $ p+q= 2k+1,$ the polynomial contributions in $(h+M_{0})$ are $(h+M_{0})^{k+{3 \over 2}}$ in the first integral and  $(h+M_{0})^{k+2}$ in the second integral.  For the latter we can thus integrate by parts $k+3$ times as previously to get a decay like ${ 1 / (1+ t)^{k+3}},$ which is $ { 1 / (1+ t)^{ { p+ q \over 2} + { 5 \over 2}}},$ and is already faster than the expected decay. For the first integral, we can integrate by parts $k+3$ times, except for the most singular term where we can integrate by parts only  $k+2$ times without boundary terms to obtain integrals under the form
$$ \tilde{I}_{\ell}^1= { 1 \over  (1 + t)^{k+2} |\ell|^s} \tilde {c}_{\ell} \int_{-M_{0}}^{M_{0}} ( h+ M_{0})^{ - { 1\over 2 }}   \chi \left( { h + M_{0} \over M_{0}} \right) \tilde{\chi}(h+ M_{0})e^{i t \ell \omega_{\circ}(h) } \dd h.$$
where $\tilde \chi$ is a smooth function. Next, since $\partial_{h}\omega_{\circ}$ does not vanish, we can make the change of variable $u = \omega_{\circ} (h) - \omega_{\circ} (-M_{0})$. By observing that this allows to write  $ h+ M_{0} = u  A(u)$ where $A$ is smooth and does not vanish
(so in particular, we have that  $ A(0) \neq 0$), we can thus write
$$ \tilde{I}_{\ell}^1= { 1 \over (1 + t)^{k+2} |\ell|^s} \tilde {c}_{\ell} \int_{0}^X  { 1 \over u^{1 \over 2}} \Psi (u) e^{it \ell u }\, \dd u$$
where $\Psi$ is smooth and compactly supported in $[0, X)$. Taylor-expanding the function $\Psi$, we obtain that
$$  \tilde{I}_{\ell}^1= { 1 \over  (1 + t)^{k+2} |\ell|^s} \tilde {c}_{\ell} \int_{0}^X  { 1 \over u^{1 \over 2}}  e^{it \ell u }\, \dd u
+ { 1 \over  (1 + t)^{k+2} |\ell|^s} \tilde {c}_{\ell} \int_{0}^X  u^{1 \over 2} \Psi^1 (u) e^{it \ell u }\, \dd u.$$
For the second integral above, we can integrate by parts once to obtain an estimate by ${ 1 \over  (1 + t)^{k+3} |\ell|^s}.$ To handle the first integral we use Lemma \ref{integrale} below, which yields the decay
$$1/ (1 + t)^{k+2+ {1 \over 2}}.$$
By noticing that $ k+ {1 \over 2} + 2= {p+ q \over 2} + 2$, we finally get the result.  
\end{proof}

\begin{lemma}
\label{integrale}
Consider the integral
$$ I(t) = \int_{0}^X { 1 \over u^{1\over 2} } e^{it u} \, \dd u.$$
Then we have that for $t\geq 1$, 
$$ |I(t)| \lesssim {1 \over t^{1 \over 2} }.$$
\end{lemma}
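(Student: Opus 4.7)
The plan is to rescale the integration variable so as to factor out the expected decay, and then show that the resulting oscillatory integral is uniformly bounded as the upper limit tends to infinity.

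First, I would perform the change of variable $v=tu$, which gives $\dd u = \dd v/t$ and $u^{-1/2} = t^{1/2} v^{-1/2}$, so that
$$
I(t) = \int_{0}^{tX} \frac{t^{1/2}}{v^{1/2}} e^{iv}\,\frac{\dd v}{t} = \frac{1}{t^{1/2}} \int_{0}^{tX} \frac{e^{iv}}{v^{1/2}}\,\dd v.
$$
It therefore suffices to show that the Fresnel-type integral $J(Y) := \int_{0}^{Y} v^{-1/2} e^{iv}\,\dd v$ is uniformly bounded for $Y \geq 1$ (the case $t \geq 1$ and $X$ fixed gives $tX \geq X$, and for small upper limits the bound is trivial since $v^{-1/2}$ is locally integrable).

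Next I would split $J(Y) = J_1 + J_2(Y)$ where $J_1 = \int_0^1 v^{-1/2} e^{iv}\,\dd v$ and $J_2(Y) = \int_1^Y v^{-1/2} e^{iv}\,\dd v$. The first piece satisfies $|J_1| \leq \int_0^1 v^{-1/2}\,\dd v = 2$, using the integrability of the singularity at $0$. For the second piece I would integrate by parts, writing $e^{iv} = \frac{1}{i}\frac{\dd}{\dd v} e^{iv}$:
$$
J_2(Y) = \left[\frac{e^{iv}}{i v^{1/2}}\right]_{1}^{Y} + \frac{1}{2i}\int_{1}^{Y} \frac{e^{iv}}{v^{3/2}}\,\dd v.
$$
The boundary terms are bounded by $2$ since $|e^{iv}| = 1$ and $v \geq 1$, and the remaining integral is absolutely convergent because $\int_{1}^{\infty} v^{-3/2}\,\dd v = 2$. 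Hence $|J_2(Y)| \leq C$ uniformly in $Y$.

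Combining these bounds gives $|J(Y)| \leq C$ for all $Y\geq 1$, and returning to the original estimate yields $|I(t)| \leq C/t^{1/2}$ for $t\geq 1$, as claimed. There is no serious obstacle here; the only point that requires care is the non-absolute convergence of the Fresnel integral at infinity, which is precisely what the integration by parts on $[1,Y]$ is designed to handle.
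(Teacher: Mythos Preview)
Your proof is correct and follows essentially the same approach as the paper: rescale by $v=tu$, split the resulting integral at $v=1$, bound the near-origin piece by absolute integrability, and control the tail by one integration by parts. The only cosmetic difference is that you phrase the tail estimate as a uniform bound on $J(Y)$, whereas the paper writes the same bound directly in terms of $t$.
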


\begin{proof}
Let us set $v=tu$ in the integral, we obtain that
$$ I(t) = { 1 \over t^{1 \over 2} } \int_{0}^{ tX} { 1 \over v^{1 \over 2}} e^{i v}\, \dd v
=   { 1 \over t^{1 \over 2} } \int_{0}^{ 1} { 1 \over v^{1 \over 2}} e^{i v}\, \dd v
+  { 1 \over t^{1 \over 2} } \int_{1}^{ tX} { 1 \over v^{1 \over 2}} e^{i v}\, \dd v$$
(assuming that $t$ is sufficiently large so that $tX \geq 1$). The first integral in the right-hand side above is clearly uniformly bounded by $1/ t^{1 \over 2}$. 
For the  second integral in the above right hand side, we can integrate by parts once to get that
$$   \left|{ 1 \over t^{1 \over 2} } \int_{1}^{ tX} { 1 \over v^{1 \over 2}} e^{i v}\, \dd v \right| 
\lesssim {1 \over t^{1 \over 2}} \left( 1 + \int_{1}^{+ \infty} { 1 \over v^{3 \over 2}}\, \dd v \right).$$
\end{proof}

\end{document}